\newtheorem{thm}{Theorem}[section]
\newtheorem{cor}[thm]{Corollary}
\newtheorem{lem}[thm]{Lemma}
\newtheorem{prop}[thm]{Proposition}
\theoremstyle{definition}
\newtheorem{defn}[thm]{Definition}
\theoremstyle{remark}
\newtheorem{rem}[thm]{Remark}
\numberwithin{equation}{section}
\newcommand{\R}{\mathbb R}
\newcommand{\T}{\mathbb T}
\newcommand{\eps}{\varepsilon}
\newcommand{\C}{\mathcal{C}}
\newcommand{\X}{\mathcal{X}}
\newcommand{\st}{{\rm s}}           
\newcommand{\un}{{\rm u}}           
\newcommand{\sst}{{\rm ss}}           
\newcommand{\uun}{{\rm uu}}
\newcommand{\tz}{ \tilde{z}}
\newcommand{\tPhi}{\tilde{\Phi}}
\def\real{{\mathbb R}}
\def\tLambda{ {\tilde \Lambda}}
\def\hz{\hat{z}}
\def \t{\tilde{t}}
\begin{document}
\title[]
{Energy Transport in Random Perturbations of Mechanical Systems }
\author{Anna Maria Cherubini${^\dag}$}
\address{Dipartimento di Matematica e Fisica ``Ennio De Giorgi'', Universit\`a del Salento,
I-73100 Lecce, Italy}
\email{ anna.cherubini@unisalento.it}
\author{Marian\ Gidea$^\ddag$}
\address{Yeshiva University, Department of Mathematical Sciences, New York, NY 10016, USA }
\email{Marian.Gidea@yu.edu}
\thanks{$^\dag$ Research of A.M.C. was partially supported by GNFM/INDAM  }
\thanks{$^\ddag$ Research of M.G. was partially supported by NSF grant  DMS-2154725.}

\subjclass[2010]{Primary,
37J40;  
37C29; 34C37; 
Secondary,
70H08. 
} \keywords{Melnikov method; homoclinic orbits; scattering map; random perturbation; Gaussian
stationary process.}
\date{}

\begin{abstract}
We describe a mechanism for transport of energy in a mechanical system consisting of a pendulum and a rotator subject to a random perturbation.
The perturbation that we consider is the product of a Hamiltonian vector field and a scalar, continuous, stationary Gaussian process with H\"older continuous realizations, scaled by a smallness parameter. We show that for almost every realization of the stochastic process, there is a distinguished set of  times for which there exists a random normally hyperbolic invariant manifold  with associated stable and unstable manifolds that intersect transversally, for all sufficiently small values of the smallness parameter. We derive the existence of orbits along which the energy changes over time by an amount proportional to the smallness parameter. This result is related to the Arnold diffusion problem for Hamiltonian systems, which we treat here in the random setting.
\end{abstract}
\maketitle
\section{Introduction}

%

The main idea of the present work is using randomness to overcome geometric obstacles in
dynamical systems coming from classical mechanics, and, in particular, to generate energy transfer.
We  consider a  $2$-degrees of freedom,  uncoupled pendulum-rotator system, which is described  by an
integrable Hamiltonian.   The energies of the rotator and  of  the pendulum are first integrals of the system.
Hence, there are no trajectories that cross the level sets of the energies, so these are geometric obstacles for the dynamics.
We add a small, random perturbation to the system; we assume that this perturbation is of a special type.
More precisely, the perturbation is given by a Hamiltonian vector field multiplied by a scalar, continuous, stationary Gaussian process with H\"older continuous paths. Additionally, the Hamiltonian vector field is assumed to vanish at the hyperbolic fixed point of the pendulum.
The energies of the rotator and of  the pendulum are no longer conserved. We show that, in particular, the energy of the rotator can change by an amount proportional to the size of the perturbation.

Our treatment of the underlying random dynamics is path-wise, in the sense that we derive results for fixed realizations of the stochastic process. Each such realization is given by an unbounded,  continuous curve. Nevertheless, we cannot reduce the problem to the case of a non-autonomous perturbation by regarding each realization as a time-dependent parameter, since we need to use the ergodicity of the process and make use of the Birkhoff ergodic theorem.

Considering the effect of random perturbations of mechanical system is very natural in applications.
There are inherently many sources of noise that affect mechanical systems, or, more generally engineering systems.
Some concrete examples can be found in, e.g.,  \cite{skorokhod2007random,belbruno2008random,caraballo2017applied}.
One particular application that we plan to study in the future concerns piezoelectric energy harvesting devices, where one wants to exploit external vibrations
to generate electrical output; see \cite{erturk2009piezomagnetoelastic,granados2017invariant,akingbade2023arnold}. Many of the existing models assume that the external vibrations are periodic, but it would be more realistic to consider noisy perturbations. Such systems also include dissipation effects, yielding random attractors \cite{wang2012sufficient} and stochastic resonance \cite{cherubini2017random}.
For applications, the path-wise approach is suitable when we want to analyze the output of a single experiment at a time, rather than study the statistics of multiple experiments.

Our approach is based on  geometric methods. The unperturbed system possesses a normally hyperbolic invariant manifold (NHIM) whose stable and unstable manifolds coincide. To understand the effect of the time-dependent perturbation, we work in the extended space, where time is viewed as an additional coordinate.
We show that for a distinguished set of times, there  is a \emph{random normally hyperbolic invariant manifold} (RNHIM) and corresponding stable and unstable manifolds that survive the perturbation.
The main difference  from the standard normal hyperbolicity theory is  that our normally hyperbolic invariant manifold is time-dependent, and, moreover
we cannot guarantee its existence for all times, but only for a certain set of times.
These manifolds are reminiscent of Pesin sets in non-uniform hyperbolicity theory \cite{Pesin2007nonuniform}.
The reason for why the RNHIM's may break up is that the underlying stochastic process is unbounded, and large spikes in the noise can destroy the relations among the hyperbolic rates that are needed for normal hyperbolicity.

Results on the persistence of stable and unstable manifolds of hyperbolic fixed points under random perturbations have been obtained in  \cite{lu2010chaos,lu2011chaotic,yagasaki2018melnikov}.
To obtain a similar result for normally hyperbolic invariant manifolds, we apply some general results from  \cite{li2013normally,li2014invariant}; see also \cite{li2015geometric}. One difficulty in applying these results to our case is that they assume that the perturbed flow is close to the unperturbed flow at all times.
This is not the case when the noise driving the perturbation is unbounded. To deal with unbounded noise, we modify the system by multiplying the Hamiltonian vector field that appears in the perturbation by a \emph{random bump function} defined in the extended space.  This bump function depends on the noise parameter, and  has the effect of cutting off the  noise when it spikes too much. This makes the modified flow stay close to the unperturbed flow, so we can apply the general theory.  For the aforementioned  distinguished set of times (where time is viewed as an additional coordinate), we show that the modified flow coincides with the original flow. This is how we  obtain the persistence of the NHIM and of its stable and unstable manifolds  for the  distinguished set of times.

We also show that, for the distinguished set of times, the stable and unstable manifolds  intersect transversally provided that certain non-degeneracy conditions are satisfied. The transverse intersections of the manifolds correspond to non-degenerate  zeroes of a certain \emph{Melnikov process}.
A key ingredient to show the existence of zeroes for the Melnikov process is  \emph{Rice's formula}, concerning the number of times the process crosses a predetermined level. Melnikov theory for random perturbations has been developed in \cite{lu2010chaos,lu2010chaos}, and the idea of using Rice's formula to obtain intersections of the invariant manifolds comes from \cite{yagasaki2018melnikov}.

Once the existence of transverse intersections of the stable and unstable manifolds is established, the dynamics along the corresponding  homoclinic orbits can be described via a \emph{random scattering map}. This is an analogue of the (deterministic) scattering map developed in \cite{DelshamsLS06c}, and its  version for time-dependent system developed in \cite{BlazevskiL11}. This is a map defined on the  RNHIM, and gives the future asymptotic of a homoclinic orbit as a function of its the past asymptotic. We show that the random scattering map changes the energy of the rotator by an amount proportional to the size of the perturbation,  provided that some non-degeneracy conditions are satisfied.
These non-degeneracy conditions rely again on Rice's formula.

The change in energy owed to the small perturbation is reminiscent of the \emph{Arnold diffusion problem} for Hamiltonian systems \cite{Arnold64}. Arnold conjectured that integrable Hamiltonian systems of general type, of more than two degrees of freedom,  subjected to small, Hamiltonian perturbations of generic type have  trajectories along which the  energy  changes by an amount that is independent of the perturbation parameter. A survey on some recent results can be found in \cite{gidea2020general}.
Much of the existing work considers deterministic perturbations.
Diffusion in randomly perturbed integrable Hamiltonian systems has been studied in \cite{bazzani1994diffusion,bazzani1998diffusion},
where they derive the Fokker-Planck equation for the distribution function of the action angle-variables.
A model for diffusion for  random compositions of cylinder maps was considered in \cite{castejon2017random}.
Another paper of related interest is \cite{DAmbrosio23}.

The upshot of our work is that, we can extend the geometric methods for Arnold diffusion developed in \cite{DelshamsLS00,DelshamsLS00,DelshamsLS06b,gelfreich2017arnold,gidea2020general} to the case in which the perturbations are random (rather than  deterministic).
The main difficulty in applying the geometric method is that the perturbation is driven by unbounded  noise, so the spikes in the noise may destroy the geometric structures.
Our current results  yield only a small change in energy, of the order of the perturbation (that is, we obtain micro-diffusion -- a term coined in \cite{bounemoura2016note}), rather than of order one.
The identification of orbits with  diffusion of order one  is the object of future investigation.


\section{Set-up}\label{section:setup}
\subsection{Unperturbed system}

The unperturbed system  is a rotator-pendulum system described by an autonomous Hamiltonian $H_0$ of the form:
\begin{equation}\label{eqn:rotator_pendulum}
\begin{split}
H_0(I,\phi,p,q)&=h_0(I)+h_1(p,q)\\&=h_0(I)+ \left (\frac{1}{2}p ^2+V (q )\right),
\end{split}
\end{equation}
with $z=(I,\phi,p,q)$ in $M:=\R\times \T\times \R\times \T$.

The phase space $M$ is endowed with the symplectic form
\[ dI\wedge d\phi+dp\wedge dq.\]

We denote by \[\nu(I):=\frac{\partial h_0}{\partial I}(I)\] the frequency of the rotator.

We assume the following:
\begin{itemize}
\item[(P-i)]
The potential $V$ is periodic of period $1$ in~$q$;
\item[(P-ii)] The potential $V$ has a non-degenerate local  maximum,  which, without loss of generality,
we set at $0$; that is,   $V'(0)=0$ and  $V''(0)<0$. We additionally assume that $q=0$ is non-degenerate in the sense of Morse, i.e.,  $0$ is the only critical point in the level set $\{V(q) = V (0)\}$.
\end{itemize}

Condition (P-ii) implies that the
pendulum   has a homoclinic orbit to $(0, 0)$, the hyperbolic fixed point of the pendulum. We consider that the homoclinic orbit is parametrized by $(p_0(t),q_0(t))$ for $t\in \mathbb{R}$, where $(p_0(t),q_0(t))\to (0,0)$ as $t\to\pm\infty$.

The Hamilton equation  associated to \eqref{eqn:rotator_pendulum} is
\begin{equation}\label{eqn:hamiltonian_unperturbed}
    \dot{z}=X^0(z)=J\nabla H_0(z),
    \end{equation}
where  $J$ is the symplectic matrix
\[J=\left(
    \begin{array}{cc}
      J_2 & 0  \\
      0 &  J_2  \\
      \end{array}
  \right)\textrm{ with } J_2=\left(
    \begin{array}{rr}
      0 & -1 \\
      1 &  0 \\
      \end{array}
  \right).\]

We denote by $\Phi^t_0$ the flow of \eqref{eqn:hamiltonian_unperturbed}.

Since for $H_0$ the pendulum and the rotator are decoupled, the action variable $I$ is preserved along the trajectories of \eqref{eqn:hamiltonian_unperturbed}.
Similarly, the energy $P=\left (\frac{1}{2}p ^2+V (q )\right)$ of the pendulum is a conserved quantity.

In the sequel, we will show that if we add a small, random perturbation to the pendulum-rotator system, there are trajectories of the perturbed system along which $I$ changes over time. If  $\nu(I)=\frac{\partial h_0}{\partial I}(I)\neq 0 $ for all $I$ within some range, the fact that $I$ changes along a trajectory implies that the energy of the rotator $h_0(I)$ also changes along that trajectory.

\subsection{Perturbed system}
To the system \eqref{eqn:hamiltonian_unperturbed} we add a random perturbation, so that the perturbed system is of the form
\begin{equation}\label{eqn:hamiltonian_perturbed}
\begin{split}\dot{z}=& X_\eps(z,\eta(t))=
X^0(z)+\eps X^1 (z,\eta(t))\\
=&J\nabla H_0(z)+\eps J\nabla H_1(z)\eta(t)
\end{split}
\end{equation}
for $\eps\in\R$, where $\eta(t)$ is a scalar, continuous, stationary Gaussian process satisfying the properties \eqref{eqn:noise_1}, \eqref{eqn:noise_2}, \eqref{eqn:noise_3}  below.

In the above,  we assume that $H_1(z )$ is a   Hamiltonian function, uniformly $\C^2$ in $z$, satisfying the following condition
\begin{equation}\begin{split} H_1(I,\phi,0,0 )&=0,\, DH_1(I,\phi,0,0 ) =0.\end{split}\label{eqn:H_1}\tag{H1}\end{equation}

The perturbation is chosen so does not affect the
inner dynamics, given by the restriction to the phase space of
the rotator. The dynamics of the rotator is integrable, hence $I$
is preserved by the inner dynamics.

The level sets of $I$ (which are invariant circles) constitute geometric obstacles for the inner dynamics. We will show that we can use the outer dynamics, along the homoclinic orbits of the pendulum, to overcome these geometric obstacles.

The system \eqref{eqn:hamiltonian_perturbed} is non-autonomous.
We denote by $\Phi^{t_0,t}_\eps(\omega)$ the corresponding flow, which depends on the initial time $t_0$ and on the realization $\omega$ of the stochastic process $\eta(t)$. For every fixed  realization $\omega$  of $\eta(t)$ we have a sample path given by  $\eta(t)(\omega)=\omega(t)$. See Section \ref{sec:RODE}.

\begin{rem}
It may be possible to remove condition \eqref{eqn:H_1}.
In fact, we will not use this condition for two of the main results (Theorem \ref{thm:random_NHIM} and Theorem \ref{prop:transverse}.
However, without \eqref{eqn:H_1}, the inner dynamics will be affected by the random perturbation, and the resulting inner dynamics may overcome on its own the geometric obstacles.
\end{rem}

\subsection{Noise}
The time-dependent function $\eta (t)$ is a scalar stationary Gaussian process with mean $0$, i.e.,
\begin{equation}\label{eqn:noise_1}\tag{R1} E[\eta(t)] = 0.\end{equation}
Stationarity means that for any $n$, $t_1,\ldots, t_n$,  and $h>0$, the random vectors
\[ (\eta(t_1),\ldots, \eta(t_n)) \textrm { and }  \textrm (\eta(t_1+h),\ldots, \eta(t_n+h))\]
have the same (Gaussian) distribution.

The  autocorrelation
function $r(h)$ is
\begin{equation}\label{eqn:autocorrelation}
r(h):=E[\eta(t)\eta(t +h )].
\end{equation}
By stationarity, the right-hand side of \eqref{eqn:autocorrelation} does
not depend on $t$.

We assume  that the autocorrelation function satisfies the following conditions
\begin{equation}\label{eqn:noise_2}\tag{R2} r(h )\textrm { is   continuous and absolutely integrable on }\mathbb{R},\end{equation}
and
\begin{equation}\label{eqn:noise_3}\tag{R3}\begin{split} r(h )=&1-C|h|^{a}+o(|h|^a)  \textrm{ as $h\to 0$,}
\\
\textrm{for some $C$ with } &\textrm{$0< C < \infty$, and some $a$ with $ 1<a \leq 2 $}.\end{split}\end{equation}

We use the notation $f(x)=o(g(x))$ to signify $\lim_{x\to 0}\frac{f(x)}{g(x)}=0$.

For a Gaussian process, if $r(h)$ satisfies \eqref{eqn:noise_3} for $0<a\le 2$, then  the sample paths  $\omega(t)$ are continuous, and if $1<a\le 2$,
then  the sample paths  $\omega(t)$ are $\alpha$-H\"older continuous for any $0<\alpha<\frac{a-1}{2}$. See \cite{belyaev1961continuity,lindgren2012stationary}.
That is,
there exists $C_H>0$, independent of $\omega$, such that
\begin{equation}\label{eqn:Holder}
 |\omega(t_1)-\omega(t_2)|<C_H|t_1-t_2|^\alpha  \textrm{ for all }t_1,t_2\in\mathbb{R}.
\end{equation}

Intuitively, the above conditions say that the lesser the loss of memory of the process is, the more regular the sample paths of the process are.

To summarize, by assumption  \eqref{eqn:noise_3},  the sample paths of  $\eta(t)$ are  continuous and $\alpha$-H\"older continuous with probability $1$.

Condition \eqref{eqn:noise_3} yields $r(0)=1$, and therefore
\begin{equation}\label{eqn:variance}
 E[\eta(t)^2]=1,
\end{equation}
which means that the  Gaussian process has variance equal to $1$.

Also,  \eqref{eqn:noise_3} implies, via the Maruyama Theorem  \cite{maruyama1949harmonic}, that $\eta(t)$ is ergodic
\begin{equation}\label{eqn:ergodic}
\lim_{T\to\pm\infty}\frac{1}{T}  \int_{0}^{T} \phi(\eta(t))dt = E[\phi(\eta(t))],\, \forall\phi:\mathbb{R}\to\mathbb{R}\textrm{ measurable function}.
\end{equation}

\begin{rem}
Gaussian stationary processes that are not continuous are necessarily very irregular.
More precisely, one of the following alternatives holds: either with probability one the sample paths $\omega(t)$ are continuous, or with probability one they are unbounded on every finite interval \cite{belyaev1961continuity}.
Hence, considering  Gaussian processes with continuous sample paths as in \eqref{eqn:noise_3} is a reasonable assumption.
\end{rem}

\section{Main results}\label{sec:main_results}

In Section \ref{section:NHIM_ unperturbed rotator_pendulum} we show that the unperturbed rotator-pendulum system possesses a normally hyperbolic invariant manifold (NHIM).
The first main result is the persistence of the NHIM, as  random normally hyperbolic invariant manifold (RNHIM), and of its stable and unstable manifolds, for a distinguished  set of times. The definition of an RNHIM and of its stable and unstable manifolds is given in Section \ref{sec:RNHIM}.

\begin{thm}[Persistence of RNHIM]\label{thm:random_NHIM}
Assume that the system \eqref{eqn:hamiltonian_perturbed} satisfies (P-i), (P-ii), (R-i), (R-ii), (R-iii) (but not necessarily (H1)).

Then, for any $\delta>0$, there exist a positive random variable $T_\delta(\omega)$, a closed set  $Q_{A_\delta,T_\delta}(\omega)\subseteq [-T_\delta, T_\delta]$,
and  $\eps_0>0$ such that, for every $t_0\in Q_{A_\delta,T_\delta}(\omega)$,   every $\eps$ with $0<\eps<\eps_0$, and a.e. $\omega \in\Omega$
there exist the following objects:
\begin{itemize}
\item[(i)] A normally hyperbolic manifold $\Lambda_\eps(\theta^{t_0}(\omega))$ 
such that
\begin{equation}\Phi^{t_0,t_1}_\eps  (\Lambda_\eps(\theta^{t_0}(\omega)))=\Lambda_\eps(\theta^{t_0+t_1}(\omega)),\end{equation}
 provided that $t_0+t_1 \in  Q_{A_\delta,T_\delta}(\omega)$.
\item[(ii)] Stable and unstable manifolds $W^{\st}(\Lambda_\eps(\theta^{t_0}(\omega)))$ and $W^{\un}(\Lambda_\eps(\theta^{t_0}(\omega)))$
such that
 \begin{equation}\label{}
   \begin{split}
   \Phi^{t_0,t_1}_\eps(\theta^{t_0}(\omega))(W^{\st}  (\Lambda_\eps(\theta^{t_0}(\omega))))=&W^{\st}(\Lambda_\eps(\theta^{t_0+t_1}(\omega)),\\
   \Phi^{t_0,t_1}_\eps(\theta^{t_0}(\omega))(W^{\un}  (\Lambda_\eps(\theta^{t_0}(\omega))))=&W^{\un}(\Lambda_\eps(\theta^{t_0+t_1}(\omega)),
   \end{split}
 \end{equation}
   provided that $t_0+t_1 \in  Q_{A_\delta,T_\delta}(\omega)$.
\end{itemize}
See Fig. \ref{fig:RNHIM}.

Moreover, the distinguished set of times
  $Q_{A_\delta,T_\delta}(\omega)$ satisfies the conditions \eqref{eqn:Q_A_measure} and \eqref{eqn:Q_A_monotone} given in Section \ref{sec:sublinearity}.
\end{thm}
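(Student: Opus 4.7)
The plan is to overcome the unboundedness of the driving noise by a cutoff construction, apply a standard persistence theorem for normally hyperbolic invariant manifolds to the modified (bounded-perturbation) system, and then show that on a suitable set $Q_{A_\delta,T_\delta}(\omega)$ of times the original flow coincides with the modified one, so the persistence transfers back.

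First, for a threshold $A_\delta>0$ (to be chosen large enough that the event $\{|\eta|\le A_\delta\}$ has probability at least $1-\delta/2$), I would introduce a smooth bump $\chi_{A_\delta}:\R\to[0,1]$ with $\chi_{A_\delta}(u)=1$ for $|u|\le A_\delta$ and $\chi_{A_\delta}(u)=0$ for $|u|\ge 2A_\delta$, and replace the perturbation $\eps J\nabla H_1(z)\eta(t)$ in \eqref{eqn:hamiltonian_perturbed} by the truncated field $\eps J\nabla H_1(z)\,\chi_{A_\delta}(\eta(t))\,\eta(t)$. Call the resulting flow $\tilde{\Phi}^{t_0,t}_\eps(\omega)$. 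By construction, the size of this modified perturbation is uniformly bounded (independently of $\omega$ and $t$) by $2\eps A_\delta\|J\nabla H_1\|_{C^1}$, so the modified flow is $O(\eps)$-close to $\Phi^t_0$ in the $C^1$ sense on compact subsets of extended phase space. This is the regime in which the general theorems of \cite{li2013normally,li2014invariant} apply.

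Second, I would work in the extended phase space $M\times\R$ and apply these persistence results to the unperturbed NHIM (constructed in Section \ref{section:NHIM_ unperturbed rotator_pendulum}) to obtain, for $0<\eps<\eps_0$, a time-dependent (equivalently, $\omega$-dependent, via the driving shift $\theta^t$) invariant manifold $\tilde\Lambda_\eps(\theta^{t_0}\omega)$ and its stable/unstable manifolds that satisfy the stated invariance identities \emph{for all} $t_0,t_1\in\R$ with respect to $\tilde\Phi^{t_0,t}_\eps$. This uses only (P-i), (P-ii), and the conditions (R1)--(R3) needed to ensure continuous (in fact H\"older) sample paths via \eqref{eqn:Holder}; hypothesis (H1) is not required at this stage, consistent with the remark in the paper.

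Third, I would define the distinguished set of times as
\[ Q_{A_\delta,T_\delta}(\omega):=\{t_0\in[-T_\delta,T_\delta]:|\omega(s)|\le A_\delta\text{ for all }s\in[t_0-L,t_0+L]\}, \]
where $L=L(\delta)$ is a buffer length chosen so that, combined with the exponential contraction/expansion along $W^{\st/\un}$ of the modified system, any orbit segment relevant for checking the stated invariance is controlled. On $Q_{A_\delta,T_\delta}(\omega)$ the truncation is inactive ($\chi_{A_\delta}(\eta(s))=1$ on the relevant time interval), so $\tilde\Phi^{t_0,t_1}_\eps(\omega)=\Phi^{t_0,t_1}_\eps(\omega)$ whenever both endpoints and the intervening trajectory stay in the unbounded-noise safe zone; setting $\Lambda_\eps(\theta^{t_0}\omega):=\tilde\Lambda_\eps(\theta^{t_0}\omega)$ and $W^{\st/\un}(\Lambda_\eps(\theta^{t_0}\omega)):=W^{\st/\un}(\tilde\Lambda_\eps(\theta^{t_0}\omega))$ then yields the invariance relations in (i) and (ii) provided $t_0,t_0+t_1\in Q_{A_\delta,T_\delta}(\omega)$.

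Fourth, I would verify the asymptotic measure conditions \eqref{eqn:Q_A_measure} and \eqref{eqn:Q_A_monotone} announced in Section \ref{sec:sublinearity}: by stationarity $P(|\eta(0)|\le A_\delta)\to 1$ as $A_\delta\to\infty$, so one may fix $A_\delta$ so that $P(\sup_{|s|\le L}|\eta(s)|\le A_\delta)\ge 1-\delta$, and then the ergodicity \eqref{eqn:ergodic} applied to the indicator of this event gives, for a.e.\ $\omega$, a random $T_\delta(\omega)$ beyond which the relative Lebesgue measure of $Q_{A_\delta,T_\delta}(\omega)$ in $[-T_\delta,T_\delta]$ is at least $1-2\delta$; monotonicity of $Q$ in $T_\delta$ is immediate from the definition.

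The main obstacle I expect is the interplay between the buffer length $L$ and the hyperbolicity constants: to conclude that $\Phi^{t_0,t_1}_\eps$ agrees with $\tilde\Phi^{t_0,t_1}_\eps$ on the relevant local pieces of $W^{\st/\un}$, one needs the unbounded-noise excursions that may occur at times between $t_0$ and $t_0+t_1$ outside $Q$ not to spoil the identification; this is handled by restricting the invariance statement to endpoints in $Q$ with bounded $|t_1|$, or by using the shadowing/graph-transform characterization of the modified invariant manifolds together with the fact that $\Lambda_\eps$ itself is uniformly $O(\eps)$-close to $\Lambda_0$ in the sup norm, so local pieces are defined by a closeness condition that is verified as long as the endpoints lie in the safe zone.
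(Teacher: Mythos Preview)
Your overall architecture---cut off the noise, apply the Li--Lu persistence theorem to the modified system, then identify the modified and original systems on a good set of times---matches the paper. But your choice of cutoff creates a real gap that your final paragraph flags but does not close.

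With your truncation $\chi_{A_\delta}(\eta(t))\eta(t)$ and your buffer set $Q=\{t_0:|\omega(s)|\le A_\delta \text{ for } s\in[t_0-L,t_0+L]\}$, the modified and original flows agree only on the finite window $[t_0-L,t_0+L]$. That is not enough: the objects $\tilde\Lambda_\eps(\theta^{t_0}\omega)$ and $W^{\st/\un}(\tilde\Lambda_\eps(\theta^{t_0}\omega))$ produced by the persistence theorem are characterized by the \emph{asymptotic} behavior of the modified flow as $t\to\pm\infty$, so knowing the two flows coincide on a finite buffer does not let you conclude these manifolds are invariant (or even meaningful) for the original flow. Likewise, the invariance relation $\Phi^{t_0,t_1}_\eps(\Lambda_\eps(\theta^{t_0}\omega))=\Lambda_\eps(\theta^{t_0+t_1}\omega)$ requires the original and modified flows to agree along the whole orbit from $t_0$ to $t_0+t_1$, and your condition $t_0,t_0+t_1\in Q$ with a fixed buffer $L$ does not prevent noise excursions above $A_\delta$ at intermediate times. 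Restricting to bounded $|t_1|$ or invoking the graph-transform picture does not repair this, since the graph is still selected by an infinite-time contraction.

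The paper's resolution is to use a different cutoff. Using the sublinearity $|\omega(s)|\le A_\omega+B|s|$ of Gaussian paths (Lemma~\ref{lem:eta}), it defines $\Omega_A=\{\omega:|\omega(s)|\le A+B|s|\ \forall s\}$ and $Q_{A,T}(\omega)=\{t_0\in[-T,T]:\theta^{t_0}\omega\in\Omega_A\}$, and builds a \emph{random} bump function $\psi_{A,\rho}(s,\omega)$ that equals $1$ whenever $|\omega(s)|\le A+B|s|$. The point is that if $t_0\in Q_{A,T}(\omega)$ then $\psi_{A,\rho}(\cdot,\theta^{t_0}\omega)\equiv 1$ on \emph{all} of $\R$, so the modified and original systems coincide for all time along the shifted path $\theta^{t_0}\omega$; hence the modified system's RNHIM and stable/unstable manifolds are literally those of the original system. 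The price is that the modified perturbation is no longer uniformly bounded but only linearly growing in $s$; the paper handles this with a more careful Gronwall estimate (Proposition~\ref{prop:perturbed_flow}) over $t\in[0,1]$, which is all Theorem~\ref{thm:Bates1} requires. The measure estimate for $Q_{A_\delta,T_\delta}(\omega)$ then follows from ergodicity applied to $\chi_{\Omega_{A_\delta}}$ (Lemma~\ref{lem:Yagasaki}).
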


\begin{figure}
\includegraphics[width=0.6\textwidth]{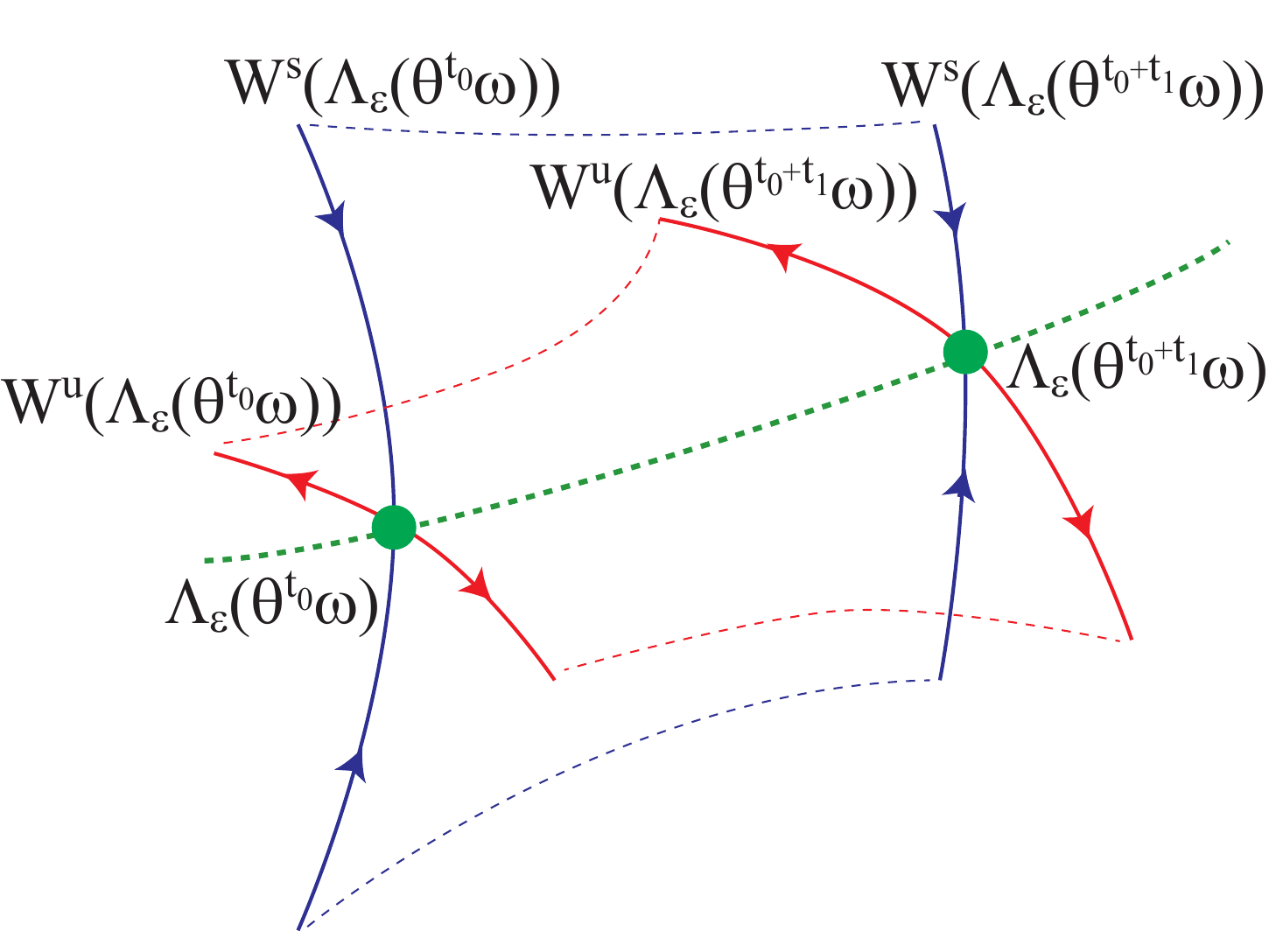}
\caption{Random NHIM and its stable and unstable manifolds.}
\label{fig:RNHIM}
\end{figure}

In Section \ref{sec:existence_transverse} we define a  Melnikov process $M^P$ \eqref{eqn:melnikov_process} to measure the splitting of the perturbed stable and unstable manifolds as it changes over time.
The second main results says that, if the Melnikov process satisfies some non-degeneracy conditions,  then the stable and unstable manifolds, corresponding to the distinguished set of times provided by Theorem \eqref{thm:random_NHIM}, intersect transversally.

\begin{thm}\label{prop:transverse}
Assume that the system \eqref{eqn:hamiltonian_perturbed} satisfies (P-i), (P-ii), (R-i), (R-ii), (R-iii) (but not necessarily (H1)).
Also assume the condition \eqref{eqn:spectral_moments} on the spectral moments of $M^P$, given in Section \ref{sec:existence_transverse}.

Then, given a set $Q_{A_\delta,T_\delta}(\omega)$ and $\eps_0>0$ as in Theorem \ref{thm:random_NHIM},  there exist $t_0\in Q_{A_\delta,T_\delta}(\omega)$ and $0<\eps_1<\eps_0$ such that for   every  $0<\eps<\eps_1$, the stable and unstable manifolds $W^\st(\Lambda_\eps(\theta^{t_0}\omega))$ and $W^\un(\Lambda_\eps(\theta^{t_0}\omega))$ have a transverse intersection at a point $\tz_\eps$.
\end{thm}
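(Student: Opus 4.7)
The plan is to reduce the existence of a transverse homoclinic intersection to finding a simple (non-degenerate) zero of the Melnikov process $M^P(\cdot,\omega)$ that lies inside the distinguished set $Q_{A_\delta,T_\delta}(\omega)$. For a fixed realization $\omega$ and a candidate time $t_0\in Q_{A_\delta,T_\delta}(\omega)$, I would parametrize points on $W^{\un}(\Lambda_\eps(\theta^{t_0}\omega))$ and $W^{\st}(\Lambda_\eps(\theta^{t_0}\omega))$ near the unperturbed homoclinic manifold provided by Theorem \ref{thm:random_NHIM}, and measure their symplectic distance along a transversal to the unperturbed homoclinic. Expanding this distance in $\eps$ via the standard Melnikov scheme adapted to the random and non-autonomous setting, the first-order coefficient is (up to a nonzero normalization) $M^P(t_0,\omega)$ from \eqref{eqn:melnikov_process}. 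Because $\{P,H_1\}$ evaluated at the hyperbolic fixed point $(p,q)=(0,0)$ vanishes automatically (since $p=0$ and $V'(0)=0$ there, independently of \eqref{eqn:H_1}), the Melnikov integrand decays exponentially along the homoclinic; combined with the $\alpha$-H\"older regularity \eqref{eqn:Holder} of $\omega$ on the bounded window $[-T_\delta(\omega),T_\delta(\omega)]$, the error is $o(\eps)$ uniformly in $t_0$, so a simple zero of $M^P(\cdot,\omega)$ at some $t_0\in Q_{A_\delta,T_\delta}(\omega)$ lifts by the implicit function theorem to a transverse homoclinic point $\tz_\eps$ for all sufficiently small $\eps$.

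Next I would produce such simple zeros of $t_0\mapsto M^P(t_0,\omega)$ via Rice's formula. Since $\eta$ is a centered stationary Gaussian process and $M^P$ is a convolution of $\eta$ with the deterministic kernel $\{P,H_1\}\circ(p_0,q_0)$, the process $M^P$ is itself centered stationary Gaussian. The spectral-moment assumption \eqref{eqn:spectral_moments} ensures that both the variance and the second spectral moment of $M^P$ are finite and strictly positive, so Rice's formula yields a positive mean density $\mu>0$ of up-crossings of the level $0$ per unit time; by ergodicity, which $M^P$ inherits from \eqref{eqn:ergodic}, almost surely the number of simple zeros of $M^P$ in $[-T,T]$ grows linearly at rate $2\mu$ as $T\to\infty$. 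In particular, the set of simple zeros has strictly positive asymptotic density along the time axis, and they are automatically non-degenerate in the sense that $\partial_{t_0} M^P(t_0,\omega)\neq 0$ at such a crossing.

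The main obstacle is matching this stochastic abundance of zeros against the pathwise geometric estimate \eqref{eqn:Q_A_measure}--\eqref{eqn:Q_A_monotone} on $Q_{A_\delta,T_\delta}(\omega)$. The latter, as promised by Theorem \ref{thm:random_NHIM}, yields a lower-density bound of the form $|Q_{A_\delta,T_\delta}(\omega)\cap[-T,T]|/(2T)\geq 1-\delta'$ for all $T$ large enough, with $\delta'\to 0$ as $\delta\to 0$. Choosing $\delta$ small enough that the complement of $Q_{A_\delta,T_\delta}(\omega)$ has density strictly smaller than the zero-crossing density of $M^P$, the simple zeros cannot all lie outside $Q_{A_\delta,T_\delta}(\omega)$, and the monotonicity property \eqref{eqn:Q_A_monotone} allows $T_\delta$ to be enlarged as needed; hence some simple zero $t_0^*$ of $M^P(\cdot,\omega)$ must belong to $Q_{A_\delta,T_\delta}(\omega)$ for a.e.\ $\omega$. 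The delicate point is that this density comparison must be carried out pathwise on the same realization $\omega$, combining the almost-sure ergodic limit for $M^P$ with the almost-sure density estimate on $Q_{A_\delta,T_\delta}(\omega)$; once $t_0^*$ is selected, applying the implicit function theorem to the splitting distance at $(t_0^*,0)$ produces $\eps_1\in(0,\eps_0]$ and a transverse homoclinic point $\tz_\eps\in W^\st(\Lambda_\eps(\theta^{t_0^*}\omega))\cap W^\un(\Lambda_\eps(\theta^{t_0^*}\omega))$ for every $0<\eps<\eps_1$, as required.
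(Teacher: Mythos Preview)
Your density-comparison step has a genuine gap. The simple zeros of $t_0\mapsto M^P(t_0,\omega)$ form a discrete (Lebesgue measure-zero) set, while the complement of $Q_{A_\delta,T_\delta}(\omega)$ in $[-T,T]$ is a set of positive measure $\leq 2\delta T$. Comparing the Rice crossing rate $\mu$ (a \emph{count} per unit time) with the measure density $\delta$ of the complement is a type mismatch: nothing prevents a small-measure set from containing all points of a discrete set. So the sentence ``the simple zeros cannot all lie outside $Q_{A_\delta,T_\delta}(\omega)$'' does not follow. Moreover, the theorem fixes $Q_{A_\delta,T_\delta}(\omega)$ in advance, so you are not free to shrink $\delta$ after the fact.

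The paper closes this gap not by a measure-theoretic argument but by an algebraic one: the Melnikov function depends on $(I,\phi,\tau,\tilde t)$ and satisfies the translation identity
\[
M^P(I,\phi,\tau,\tilde t)=M^P(I,\phi+\nu(I)\varsigma,\tau+\varsigma,\tilde t+\varsigma)\qquad\text{for all }\varsigma\in\mathbb{R},
\]
obtained by a change of variable in the integral \eqref{eqn:tau_integral}. Rice's formula gives a non-degenerate zero $\tilde t^*$ of $\tilde t\mapsto M^P(I^*,\phi^*,\tau^*,\tilde t)$ for \emph{some} $\tilde t^*\in\mathbb{R}$; the identity above then shows that $\tau^*$ is a non-degenerate zero of $\tau\mapsto M^P(I^*,\phi^*,\tau,\tilde t^*)$, and, crucially, that for \emph{any} $\varsigma$ the shifted point $\tau^*+\varsigma$ is a non-degenerate zero at the shifted time $\tilde t^*+\varsigma$. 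Since $Q_{A_\delta,T_\delta}(\omega)$ has positive measure, one simply picks $\varsigma^*$ with $t_0:=\tilde t^*+\varsigma^*\in Q_{A_\delta,T_\delta}(\omega)$ and applies Lemma~\ref{lem:zero_trans} at that $t_0$. This bypasses any need to compare densities. Your reduction to the Melnikov process and the use of Rice's formula are correct; what is missing is precisely this time-invariance mechanism, and with it the passage from a zero in the $\tilde t$-variable to the non-degenerate zero in $\tau$ that Lemma~\ref{lem:zero_trans} actually requires.
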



In Section \ref{sec:random_scattering} we define the random scattering map associated to a homoclinic intersection of the stable and unstable manifolds obtained in Theorem \ref{prop:transverse}. This map relates the future asymptotic of a homoclinic orbit as a function of its the past asymptotic.

In Section \ref{sec:change_in_action} we define a Melnikov process $M^I$ \eqref{eqn:tau_I_integral} to measure the splitting of the action $I$-level sets by the scattering map.
The third main result says that, if the Melnikov process satisfies some non-degeneracy conditions, then the scattering map grows the action $I$ (and hence the energy of the rotator) by $O(\eps)$. Consequently, there are trajectories of the perturbed system along which the action $I$ (energy)  grows by $O(\eps)$.

\begin{thm}\label{prop:change_in_I}
Assume that the system \eqref{eqn:hamiltonian_perturbed} satisfies (P-i), (P-ii), (R-i), (R-ii), (R-iii) and  (H1).
Also assume the condition \eqref{eqn:spectral_moments_I} on the spectral moments of $M^I$, given in Section \ref{sec:change_in_action}.

Then, given
$\eps_0>0$ as in Theorem \ref{thm:random_NHIM}, for every $v>0$   there exist
%
%
%
trajectories $\tz_\eps(t)$, and  times $T_\eps>0$,  such that
\[ I(\tz_\eps(T_\eps))-I(\tz_\eps(0))=\eps v +O(\eps^{1+\rho}).\]
\end{thm}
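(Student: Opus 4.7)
The plan is to combine three ingredients: the transverse homoclinic intersections of Theorem \ref{prop:transverse}, the random scattering map from Section \ref{sec:random_scattering} whose $I$-displacement is controlled by the Melnikov process $M^I$ of Section \ref{sec:change_in_action}, and a Rice-type level-crossing argument to locate times at which $M^I$ equals the prescribed value $v$. First, I would exhibit a time $\tau^*\in Q_{A_\delta,T_\delta}(\omega)$ at which simultaneously the perturbed stable and unstable manifolds intersect transversally \emph{and} $M^I(\tau^*)=v$. Because $M^I(\tau)$ is a linear functional of the Gaussian noise $\eta$ (an integral of $\{I,H_1\}$ along the unperturbed pendulum homoclinic against a time-shift of $\eta$), it is itself a continuous, centered, stationary Gaussian process, and the spectral-moment bound \eqref{eqn:spectral_moments_I} validates Rice's formula for $M^I$: the expected number of up-crossings of level $v>0$ per unit time is a strictly positive constant. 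Ergodicity then gives positive asymptotic density for $\{\tau:M^I(\tau)=v\}$ in $\R$ almost surely. Combined with the density conditions \eqref{eqn:Q_A_measure}--\eqref{eqn:Q_A_monotone} on $Q_{A_\delta,T_\delta}(\omega)$ and the positive-density set of transverse intersection times supplied by Theorem \ref{prop:transverse} (itself produced by applying Rice to $M^P$), a joint Rice argument for the jointly Gaussian pair $(M^P,M^I)$ yields the desired $\tau^*$.

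Next, Theorem \ref{prop:transverse} at $\tau^*$ provides a transverse homoclinic point $\tz_\eps\in W^{\st}(\Lambda_\eps(\theta^{\tau^*}\omega))\itr W^{\un}(\Lambda_\eps(\theta^{\tau^*}\omega))$. The random scattering map sends the past asymptotic $x_-\in\Lambda_\eps$ of the orbit through $\tz_\eps$ to its future asymptotic $x_+\in\Lambda_\eps$, and the Melnikov expansion of Section \ref{sec:change_in_action}, in which the H\"older bound \eqref{eqn:Holder} controls the remainder, produces
\[
I(x_+)-I(x_-)=\eps\, M^I(\tau^*)+O(\eps^{1+\rho})=\eps v+O(\eps^{1+\rho}),
\]
for some $\rho=\rho(\alpha)>0$, uniformly in $\tau^*\in Q_{A_\delta,T_\delta}(\omega)$ thanks to the $C^1$-closeness of the RNHIM and of its stable/unstable manifolds to their unperturbed counterparts on the distinguished set of times, as furnished by Theorem \ref{thm:random_NHIM}. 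Hypothesis \eqref{eqn:H_1} makes $I$ invariant under the inner dynamics on $\Lambda_\eps$, so $I(x_-)$ and $I(x_+)$ are the asymptotic $I$-values of the orbit $\tz_\eps(t)$ as $t\to-\infty$ and $t\to+\infty$ respectively. Since $\tz_\eps$ lies on both $W^{\un}$ and $W^{\st}$, this orbit approaches $x_-$ (resp.\ $x_+$) exponentially fast as $t\to-\infty$ (resp.\ $t\to+\infty$); choose times $T_\eps^-<T_\eps^+$ with $|I(\tz_\eps(T_\eps^\pm))-I(x_\pm)|=O(\eps^{1+\rho})$, reindex so that $\tz_\eps(0)=\tz_\eps(T_\eps^-)$, and set $T_\eps=T_\eps^+-T_\eps^-$ to obtain
\[
I(\tz_\eps(T_\eps))-I(\tz_\eps(0))=\eps v+O(\eps^{1+\rho}),
\]
as claimed.

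The main obstacle, in my view, is the first step: verifying that $M^I$ inherits from $\eta$ the stationary Gaussian structure with spectral moments controlled by \eqref{eqn:spectral_moments_I}, and then arranging for the level set $\{M^I=v\}$, the transverse-intersection set obtained from non-degenerate zeros of $M^P$, and the distinguished set $Q_{A_\delta,T_\delta}(\omega)$ to share a common element. These three sets are defined through rather different functionals of $\omega$, and their simultaneous intersection requires a multidimensional Rice-type argument for the jointly Gaussian pair $(M^P,M^I)$ together with the ergodic positive-density information \eqref{eqn:Q_A_measure}--\eqref{eqn:Q_A_monotone}, rather than any pointwise estimate. A secondary difficulty is ensuring that the error $O(\eps^{1+\rho})$ in the scattering-map calculation is genuinely uniform in $\tau^*\in Q_{A_\delta,T_\delta}(\omega)$ despite the noise being unbounded, which ultimately relies on the random-bump-function cutoff used in the construction of the RNHIM.
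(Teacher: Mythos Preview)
Your overall architecture is the paper's: locate a transverse homoclinic via Theorem~\ref{prop:transverse}, expand the $I$-jump of the scattering map through the Melnikov process $M^I$, then arrange $M^I=v$. The divergence---and a genuine gap---lies exactly where you flag the main obstacle.

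The ``joint Rice argument for the jointly Gaussian pair $(M^P,M^I)$'' cannot produce a single time at which $M^P=0$ and $M^I=v$ simultaneously. Each of those level sets is almost surely a discrete subset of $\R$, so asking for a common point is asking the one-parameter curve $\t\mapsto(M^P(\t),M^I(\t))$ in $\R^2$ to pass through the specific point $(0,v)$, which a generic smooth curve does not. Rice's formula controls the expected density of level crossings of a \emph{scalar} process; it gives no mechanism for a planar curve to hit a prescribed point. The three-way intersection you set up is therefore not a technicality awaiting a multidimensional Rice estimate---it is the wrong formulation, and positive density of each factor separately does not rescue it.

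The paper decouples the two conditions instead of intersecting them. It applies Theorem~\ref{prop:transverse} once to obtain a transverse homoclinic $\tz_\eps$ at some $t_0\in Q_{A_\delta,T_\delta}(\omega)$, with footpoints $\tz^\pm_\eps$. Hypothesis~\eqref{eqn:H_1} forces $\Lambda_\eps(\theta^{t_0}\omega)=\Lambda_0$ and makes $I$ invariant under the inner flow, so $I(\tPhi^\varsigma_\eps(\tz^\pm_\eps))=I(\tz^\pm_\eps)$ for every $\varsigma$, irrespective of whether $t_0+\varsigma\in Q_{A_\delta,T_\delta}(\omega)$. The homoclinic orbit is then flowed by $\tPhi^\varsigma_\eps$, the Melnikov expansion is recomputed at the shifted parameters, and a \emph{scalar} Rice argument for $M^I$ alone (Proposition~\ref{prop:Rice2}) supplies $\varsigma^*$ at which $M^I$ crosses the level $v$; the time-invariance identity~\eqref{eqn:M_invariance} for $M^I$ links the flowed computation back to the fixed $I$-jump $I(\tz^+_\eps)-I(\tz^-_\eps)$. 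Transversality is never revisited after the first step, and no two-dimensional level-crossing is needed.
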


This result says that the perturbed system has trajectories that exhibit micro-diffusion in the action variable.
The obtained change in action is obtained along a single homoclinic orbit.
We stress that, while the change is of order
$O(\eps)$, the constant $v$ can be chosen arbitrarily large. This is different from the case of time-periodic (or quasi-periodic) Hamiltonian perturbations.

It seems possible to obtain true Arnold diffusion, i.e.,  existence of trajectories along which the action variable changes by
$O(1)$, by concatenating   trajectories segments that change $I$ by $O(\eps)$, and then applying a shadowing lemma similar to those in \cite{gelfreich2017arnold,gidea2020general}. Some of the challenges is to show that there are $O(1/\eps)$ such trajectories  segments, and to obtain a version of the aforementioned shadowing lemma in the random setting.

\section{Background}

%
%

\subsection{Random dynamical system}\label{sec:RDS}
We consider a probability space  $(\Omega, \mathscr{F},\mathbb{P})$, where $\Omega$ is the sample space of
outcomes, $\mathscr{F}$ is the $\sigma$-algebra  of events,  and $\mathbb{P}$ is a probability measure
 that assigns probabilities to the events in $\mathscr{F}$.

A stochastic process $\eta$ is a mapping $t\in\real\mapsto \eta(t):\Omega \to R$, where
each $\eta(t)$ is a random variable, i.e., a measurable function from $\Omega$ to $\real$.
For a fixed realization $\omega\in\Omega$, the function $\omega(t):=\eta(t)(\omega)$ is referred to as a  sample path.
(We note that, in this paper, under \eqref{eqn:noise_3}, we assume that a.e. sample path is H\"older continuous.)

On $\Omega$ we consider the $\mathbb{P}$-preserving measurable flow  $\theta^t:\Omega\to\Omega$, given by
\begin{equation}
\theta^t \omega (s)=\omega(t+s).
\end{equation}
It satisfies  the following conditions for all   $\omega\in\Omega$:
\begin{itemize}
\item[(i)] $\theta^0  \omega =\omega$,
\item[(ii)]  $\theta^{t_1+t_2} \omega =\theta^{t_1}(\theta^{t_2} \omega)$, for $t_1,t_2\in\mathbb{R}$,
\end{itemize}for each $\omega\in\Omega$.
The dynamical system $(\Omega,\theta^t)$ is referred to as a \emph{metric dynamical system}.

Let $(M,\mathscr{B})$ be a measurable space.
A measurable mapping  \[\Phi:\mathbb{R}\times \Omega \times M\to M\] is a \emph{random dynamical system (RDS) over $\theta$}, if  it satisfies the following cocycle conditions for all   $\omega\in\Omega$:
\begin{itemize}
\item[(i)] $\Phi(0,\omega)(z)=(z)$, for all $z\in M$,
\item[(ii)] $\Phi(t_1+t_2,\omega)(z)=\Phi(t_1,\theta^{t_2}\omega)(\Phi(t_2,\omega)(z))$, for all $z\in M$, $t_1,t_2\in \R$.
\end{itemize}

We will often write $\Phi(t,\omega)=\Phi^t(\omega)$.

\subsection{Random differential equations}\label{sec:RODE}
Let $M$ be a smooth manifold and $X:M\times\Omega \to TM$, $(z,\omega)\in M\times\Omega \mapsto X(z,\omega)\in T_zM$,  be a random vector field that is  $\C^r$, $r\ge 1$, in the $z$-component, and measurable in the $\omega$-component.

A solution (in the sense of Carath\'eodory) of a  system
 \begin{equation}\label{eqn:equation_Caratheodory}
\left\{
 \begin{array}{ll}
    \dot{z} &= X(z,\theta^t\omega) \\
    z(t_0)  &= z_0
  \end{array}
\right.
\end{equation}
is a function  $z(t;z_0,t_0,\omega):M\to M$ satisfying
\begin{equation}\label{eqn:Caratheodory}
z(t;z_0,t_0,\omega)=z_0+\int_{t_0}^{t} X\left(z(s;z_0,t_0,\omega),\theta^s\omega\right)ds,
\end{equation} where we  fix a realization $\omega$ of the  process $\eta(t)$.
In the above notation $\omega$ refers to the choice of realization at time $t=0$.

If for all $\omega\in\Omega$, $X_\eps\in  \C^{r}$, $r\geq 1$, then  \eqref{eqn:hamiltonian_perturbed}  yields a unique solution $t\mapsto z(t;z_0,t_0,\omega)$ which is $\C^r$ in $z_0$.
See \cite{arnold1998random,duan2015introduction}.

If we denote $z(t)=z(t;z_0,t_0,\omega)$ and $\bar{z}(t)=z(t_0+t)$, and make a change of variable $s\mapsto t_0+s'$ in \eqref{eqn:Caratheodory},
we have
\begin{equation}\label{eqn:change_of_variable}
\begin{split}
z(t)=&z_0+\int_{0}^{t-t_0} X\left(z(t_0+s'),\theta^{t_0+s'}\omega\right)ds'\\
=&z_0+\int_{0}^{t-t_0} X\left(\bar z(s'),\theta^{s'}(\theta^{t_0}\omega)\right)ds'.
\end{split}
\end{equation}
Noting that $\bar{z}(0)=z(t_0)=z_0$, we see that the right hand side of \eqref{eqn:change_of_variable} is the solution of
 \begin{equation}\label{eqn:equation_Caratheodory}
\left\{
 \begin{array}{ll}
    \dot{\bar z} &= X(\bar z,\theta^t(\theta^{t_0}\omega)) \\
    \bar z(0)  &= z_0
  \end{array}
\right.
\end{equation}
evaluated at time $t-t_0$, therefore, it coincides with $z(t-t_0; z_0,0,\theta^{t_0}\omega)$.

We obtained  the  following invariance relation:
\begin{equation}\label{eqn:elapsed_time}
 z(t;z_0, t_0,\omega)=z(t-t_0;z_0,0,\theta^{t_0}\omega).
\end{equation}
That is, the solution only depends on the elapsed time $t-t_0$ and on the  random parameter $\theta^{t_0}\omega$ at time $t_0$.

Therefore,
\begin{equation}\label{eqn:RDE_RDS}
  \Phi(t,\omega)(z_0)=z(t;z_0,0,\omega)
\end{equation}
determines a \emph{random dynamical system (RDS) over $\theta^t$}.

From \eqref{eqn:elapsed_time} and \eqref{eqn:RDE_RDS} we have that, for any $t_0$,
\begin{equation}\label{eqn:RDE_RDS_2}
  \Phi(t-t_0,\theta^{t_0}\omega)(z_0)=z(t-t_0;z_0,0,\theta^{t_0}\omega)=z(t;z_0,t_0,\omega).
\end{equation}

In the case when  the sample paths $\omega(t)$ are continuous,  for each realization $\omega$ the equation \eqref{eqn:equation_Caratheodory} is a classical non-autonomous differential equation,
and its solutions  \eqref{eqn:Caratheodory} are in  classical sense.

\subsection{Normally hyperbolic invariant manifolds for random dynamical systems}\label{sec:RNHIM}
In the sequel we follow \cite{li2013normally,li2014invariant}.

Let $\Phi(t,\omega)$ be a random dynamical system.

A random set is a mapping \[\omega\in\Omega\mapsto \mathcal{M}(\omega)\subseteq M\]
assigning to each path $\omega\in\Omega$ a closed subset $\mathcal{M}(\omega)\subseteq M$,
such that
\[\omega \to \inf_{y\in \mathcal{M}(\omega)} \| y-x\| \textrm { is measurable for each } x \in M.\]

A random invariant manifold  is a random manifold $\mathcal{M}(\omega)\subseteq M $ such that
\[\Phi(t,\omega)(\mathcal{M}(\omega))=\mathcal{M}(\theta^t\omega), \textrm{ for all } t\in\mathbb{R}, \omega \in\Omega.\]

A random variable $C(t)$ is said to be {\em  tempered} if
\begin{equation}\label{eqn:tempered}
\lim_{t\to\pm\infty}\frac{\log C(\theta^t \omega )}{t}=0 \textrm{ for a.e. } \omega \in\Omega.
\end{equation}

\begin{defn}\label{defn:RNHIM} A random invariant manifold $\Lambda(\omega)$ is  normally hyperbolic
if for a.e. $\omega \in\Omega$ and all $x\in\Lambda(\omega)$ there exists an invariant splitting of $T_xM$, which is $\C^0$ in $x$ and measurable in $\omega$,
\[T_xM=T_x\Lambda(\omega)\oplus E^{\un}_x(\omega)\oplus E^{\st}_x(\omega),\]
whose bundles are invariant in the sense
\begin{equation*}\begin{split}
D_x\Phi(t,\omega)(T_x\Lambda(\omega))=& T_{\Phi(t,\omega)(x)}\Lambda(\theta^t\omega) ,\\
D_x\Phi(t,\omega)(E^{\un}_x (\omega))=&E^{\un}_{\Phi(t,\omega)(x)}(\theta^t\omega),\\
D_x\Phi(t,\omega)(E^{\st}_x (\omega))=&E^{\st}_{\Phi(t,\omega)(x)}(\theta^t\omega),
\end{split}\end{equation*}
and  there exist a tempered random variable $C(x,\omega)>0$ and $(\theta, \Phi)$-invariant random variables (rates)
\[ 0<\alpha (x,\omega)<\beta(x,\omega)\]
such that for all $x\in\Lambda(\omega)$ we have
\begin{equation}
\label{eqn:NHIM_rates}
\begin{split}
v\in E^{\st}_x(\omega)\Rightarrow\|D_x\Phi(t,\omega)v\|<&C(x,\omega)e^{-\beta(x,\omega)t}\|v\|, \textrm { for } t\geq 0,\\
v\in E^{\un}_x(\omega)\Rightarrow\|D_x\Phi(t,\omega)v\|<&C(x,\omega)e^{ \beta(x,\omega)t}\|v\|, \textrm { for } t\leq 0,\\
v\in  T_x\Lambda(\omega)\Rightarrow\|D_x\Phi(t,\omega)v\|<&C(x,\omega)e^{ \alpha(x,\omega)|t|}\|v\|, \textrm { for all } t.
\end{split}
\end{equation}
\end{defn}

When the objects in Definition \ref{defn:RNHIM} do not depend on the random parameter $\omega$, we obtain the classical definition of a NHIM as in \cite{HirschPS77}; a brief summary of the normal hyperbolicity theory can be found in \cite{DelshamsLS00}.

We will use the following results from \cite{li2013normally,li2014invariant} on the persistence of the NHIM under  random perturbations in Section \ref{sec:persistence_NHIM}.

\begin{thm}[Persistence of NHIM]\label{thm:Bates1}
Assume that $\Phi_0(t)$  is a (deterministic) $\C^r$ flow on $M$, $r \geq 1$. Assume that $\Phi_0(t)$ has a compact, connected
$\C^r$ normally hyperbolic invariant manifold $\Lambda\subseteq M$. Let the positive exponents
related to the normal hyperbolicity of $\Lambda$ be $0<\alpha < \beta$,  which  are
constant and deterministic.

Then there exists $\bar{\eps}>0$ such that for any random
flow $\Phi(t, \omega)$ on $M$ which is $\C^1$ in $x$, if
\begin{equation}\label{eqn: flow_error}
\|\Phi(t, \omega)- \Phi_0(t)\|_{\C^1} < \bar{\eps}, \textrm { for } t \in [0, 1],\, \omega \in\Omega\end{equation}
then
\begin{itemize}
\item [(i)] {\bf Persistence:} $\Phi(t, \omega)$ has a normally hyperbolic random invariant manifold
$\Lambda(\omega)$ which is $\C^1$ in $x$;
\item [(ii)] {\bf Smoothness:} If $\ell<\min\{\beta/\alpha,r\}$,  then $\Lambda(\omega)$ is   $\C^\ell$ in $x$, and is
diffeomorphic to $\Lambda$ for a.e. $\omega\in\Omega$;
\item [(iii)] {\bf Existence of stable and unstable manifolds:} $\Phi(t, \omega)$ has stable manifolds $W^{\st}(\Lambda(\omega))$ and $W^{\un}(\Lambda(\omega))$ that are $\C^{\ell-1}$ and depend measurably on $\omega$;
\item[(iv)]{\bf Existence of stable and unstable foliations:} The stable manifold  $W^{\st}(\Lambda(\omega))$ is foliated by
an equivariant family of $\C^r$ stable leaves
\[ W^{\st}(\Lambda(\omega))=\bigcup_{x\in \Lambda(\omega)} W^{\sst}(x,\omega),\]
and the unstable manifold  $W^{\un}(\Lambda(\omega))$ is foliated by
an equivariant family of $\C^r$ unstable leaves
\[ W^{\un}(\Lambda(\omega))=\bigcup_{x\in \Lambda(\omega)} W^{\uun}(x,\omega),\]
both depending measurably on $\omega$.
\end{itemize}
\end{thm}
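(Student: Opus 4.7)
The plan is to reduce the flow to a discrete random cocycle and adapt the graph-transform argument of Hirsch--Pugh--Shub to the random setting, the main novelty being a measurable tracking of the $\omega$-dependence and the emergence of tempered random constants. First, I would pass to the time-$1$ random map $F(\omega) := \Phi(1,\omega)$; by the cocycle property $\Phi(n,\omega) = F(\theta^{n-1}\omega)\circ\cdots\circ F(\omega)$, and \eqref{eqn: flow_error} yields $\|F(\omega)-F_0\|_{\C^1}<\bar\eps$ uniformly in $\omega$, with $F_0 = \Phi_0(1)$. Once a discrete invariant family $\Lambda(\omega)$ is produced, the continuous version is recovered by $\Lambda(\theta^s\omega) := \Phi(s,\omega)(\Lambda(\omega))$ for $s\in(0,1)$, with consistency forced by the cocycle identity.

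In a tubular neighborhood of $\Lambda$ with the deterministic splitting $TM|_\Lambda = T\Lambda\oplus E^{\st}\oplus E^{\un}$ and coordinates $(x,y^{\st},y^{\un})$, $F_0$ has block-triangular linearizations with norms bounded by $e^{\alpha}$ tangentially, $e^{-\beta}$ stably, and bounded below by $e^{\beta}$ unstably; the random perturbation is a $\C^1$ correction of size $O(\bar\eps)$ uniformly in $\omega$. I would look for $\Lambda(\omega)$ as the graph of a measurable section $\sigma(\omega,\cdot):\Lambda\to E^{\st}\oplus E^{\un}$, and define the random graph transform $\mathcal{G}$ by the requirement
\begin{equation*}
F(\omega)(\mathrm{graph}\,\sigma(\omega,\cdot)) = \mathrm{graph}\,(\mathcal{G}\sigma)(\theta\omega,\cdot).
\end{equation*}
On the Banach space of bounded, measurable, uniformly Lipschitz sections, $\mathcal{G}$ is a contraction with ratio $e^{-\beta+\alpha}+O(\bar\eps)$ under the spectral gap $\beta>\alpha$ and small $\bar\eps$, and its unique fixed point gives (i). For the higher regularity (ii), a fiber-contraction principle applied to the formal equation satisfied by $D\sigma$ gives contraction rate $e^{-\beta+\ell\alpha}$, whence convergence and $\C^\ell$ smoothness for $\ell<\min\{\beta/\alpha,r\}$; the Lipschitz closeness of $\Lambda(\omega)$ to $\Lambda$ supplies the diffeomorphism.

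For (iii) and (iv), I would apply the analogous Lyapunov--Perron method leaf by leaf. For $x\in\Lambda(\omega)$, the strong stable leaf $W^{\sst}(x,\omega)$ is characterized as the set of $z$ whose forward cocycle orbit satisfies $\dist(\Phi(n,\omega)(z),\Lambda(\theta^n\omega))\le C(x,\omega)e^{-\beta n}$, and is constructed as the fixed point of a contraction on an exponentially weighted sequence space modelled on $E^{\st}_x$. Varying $x$ produces the stable foliation, and $W^{\st}(\Lambda(\omega))$ is realized as its union; the unstable objects are obtained symmetrically by running the cocycle backward. The $\C^r$ regularity of individual leaves, in contrast with the $\C^\ell$ regularity of the manifold itself, follows from the standard Fenichel observation that the dynamics restricted to a leaf does not feel the tangential rate $\alpha$, so the effective spectral gap relevant to leaf regularity is arbitrarily large.

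The main obstacle will be propagating measurability and temperedness through the iterative construction. Measurability of each iterate $\mathcal{G}^n\sigma$ in $(\omega,x)$ is preserved because $F(\omega)$ is continuous in $x$ and measurable in $\omega$, but passage to the fixed point requires equipping the section space with an appropriate Polish topology. Temperedness of $C(x,\omega)$ arises because the accumulated effect of the $O(\bar\eps)$ random deviations from the uniform hyperbolic rates grows subexponentially along typical orbits, via the subadditive ergodic theorem applied to $\log\|D_y F(\theta^{n-1}\omega)\cdots D_y F(\omega)\|$; verifying the integrability of these log-norms, which is what ultimately promotes the pathwise contraction argument into a statement with well-defined random exponents $\alpha,\beta$ and tempered prefactor $C(x,\omega)$, is the technical crux of the random theory.
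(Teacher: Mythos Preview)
The paper does not give a proof of this theorem: it is quoted verbatim as a result from \cite{li2013normally,li2014invariant} (Li--Lu--Bates), introduced by ``We will use the following results from \cite{li2013normally,li2014invariant} on the persistence of the NHIM under random perturbations,'' and is then applied as a black box in the proof of Theorem~\ref{thm:random_NHIM}. So there is no proof in the paper to compare against.

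Your outline is a faithful sketch of the strategy actually used in those references: discretize to the time-$1$ random map, run a random graph transform in a tubular neighborhood to obtain $\Lambda(\omega)$, use the fiber-contraction principle for $\C^\ell$ regularity under the spectral-gap condition $\ell<\beta/\alpha$, and build the stable/unstable foliations via a leafwise Lyapunov--Perron argument. One small caveat: because the hypothesis \eqref{eqn: flow_error} gives a \emph{uniform} $\C^1$ bound in $\omega$, the temperedness of $C(x,\omega)$ here is nearly automatic---the accumulated deviations are uniformly bounded, not merely subexponential---so the subadditive ergodic theorem is more machinery than the present statement requires; the genuinely random aspect is the measurable selection of the fixed-point section, which you correctly flag.
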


Condition \eqref{eqn: flow_error} implies that the perturbed flow and the unperturbed flow stay $\bar\eps$-close for all time, since we can
re-initialize the time and update the random variable at the end of the  time-interval $[0,1]$.
Also note that we can replace the time domain $[0,1]$ with any closed interval.

The perturbed NHIM, and its stable and unstable manifolds, can be described as graphs over the unperturbed ones, respectively.
There exist a smooth parametrization $k_{\omega}:\Lambda_0\to\Lambda(\omega)$, depending on $\omega$ in a measurable fashion, such that $\Lambda(\omega)=k_{\omega}(\Lambda_0)$.
Given a system of coordinates on the unperturbed manifold $\Lambda_0$, we can transport it through $k_\omega$ to obtain a system of coordinates on $\Lambda(\omega)$. We will use this fact in Section \ref{sec:transverse}.

The stable and unstable manifolds  $W^{\st}(\Lambda(\omega))$,  $W^{\un}(\Lambda(\omega))$, given in Theorem \ref{thm:Bates1}, have the following asymptotic properties:
\begin{equation}\label{eqn:stable_unstable_equivalent}
\begin{split}
  x \in W^{\st}(\Lambda(\omega))  \Rightarrow & \exists x^+ \in\Lambda(\omega) \textrm{ s.t. } d(\Phi(t,\omega)(x),\Phi(t,\omega)(x^+))\to 0\textrm { as } t\to+\infty,\\
  x \in W^{\un}(\Lambda(\omega))  \Rightarrow & \exists x^-\in \Lambda(\omega) \textrm{ s.t. } d(\Phi(t,\omega)(x),\Phi(t,\omega)(x^-))\to 0\textrm { as } t\to -\infty,
\end{split}
\end{equation}
where the point $x^\pm \in\Lambda(\omega)$ is uniquely defined by $x$.
Then, we respectively have
\begin{equation}\label{eqn:stable_unstable_fiber_equivalent}
\begin{split}
  x \in W^{\st}(\Lambda(\omega))  \Rightarrow & x \in W^{\sst}(x^+,\omega),\\
  x \in W^{\un}(\Lambda(\omega))  \Rightarrow & x \in W^{\uun}(x^-,\omega).
\end{split}
\end{equation}

\section{Preliminary results}

\subsection{Sub-linearity of the noise }\label{sec:sublinearity}
We assume that the stochastic process $\eta$ satisfies \eqref{eqn:noise_1}, \eqref{eqn:noise_2}, \eqref{eqn:noise_3}.
\begin{lem}\label{lem:eta}
For almost every realization $\omega\in\Omega$ of $\eta$ we have
\begin{equation}\label{eqn:eta_lim}\lim_{s\to+\infty}\frac{|\omega(s)|}{s}=0,\end{equation} that is, $\omega(s)=o(s)$.
Therefore, there exist  $A_\omega >0$ depending on $\omega$  and $B>0$ that can be chosen independent of $\omega$, such that
\begin{equation}\label{eqn:eta_ineq}|\omega(s) |\le  A_\omega+Bs\textrm{ for all } s\ge 0. \end{equation}
See Fig.~\ref{fig:Sets_1}
\end{lem}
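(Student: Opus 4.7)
The plan is to establish the sublinearity \eqref{eqn:eta_lim} by a Borel--Cantelli argument driven by Gaussian concentration of $\sup_{[0,1]}|\eta|$, and then deduce the affine bound \eqref{eqn:eta_ineq} by continuity on an initial compact interval. I prefer this route over a direct appeal to Birkhoff because the discrete time-$1$ map on $\Omega$ need not automatically inherit ergodicity from the continuous-time flow, whereas the Gaussian tail estimate is immediate and avoids that subtlety.

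First I would define $F:\Omega\to[0,\infty]$ by $F(\omega):=\sup_{s\in[0,1]}|\omega(s)|$. By \eqref{eqn:noise_3} a.e. sample path is continuous, so $F$ is a.s. finite and measurable. Applying the Borell--TIS inequality to the centered Gaussian process $\{\eta(s)\}_{s\in[0,1]}$, whose pointwise variance is $1$ by \eqref{eqn:variance}, one obtains $m:=E[F]<\infty$ together with the sub-Gaussian tail
$$\mathbb{P}(F>m+x)\le 2e^{-x^2/2},\quad x>0.$$

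Next, by stationarity the random variable $F(\theta^n\omega)=\sup_{s\in[n,n+1]}|\omega(s)|$ has the same distribution as $F$ for every $n\in\mathbb{N}$. Fixing any $\delta\in(0,1/2)$, one has
$$\mathbb{P}\bigl(F(\theta^n\omega)>n^{1/2+\delta}\bigr)\le 2\exp\!\Bigl(-\tfrac{1}{2}(n^{1/2+\delta}-m)^2\Bigr),$$
which is summable in $n$. By the Borel--Cantelli lemma, for a.e. $\omega$ there exists $n_0=n_0(\omega)$ such that $F(\theta^n\omega)\le n^{1/2+\delta}$ for all $n\ge n_0$. For $s\ge n_0+1$, setting $n:=\lfloor s\rfloor\ge n_0$ gives $|\omega(s)|\le F(\theta^n\omega)\le n^{1/2+\delta}$, hence $|\omega(s)|/s\le n^{\delta-1/2}\to 0$, proving \eqref{eqn:eta_lim}.

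For \eqref{eqn:eta_ineq}, fix any $B>0$. By \eqref{eqn:eta_lim} there exists $s_0=s_0(\omega)>0$ with $|\omega(s)|\le Bs$ for $s\ge s_0$. On the compact interval $[0,s_0]$ the continuous path $\omega$ attains a maximum $M_\omega<\infty$, so setting $A_\omega:=M_\omega$ yields $|\omega(s)|\le A_\omega+Bs$ for all $s\ge 0$, with $B$ chosen independently of $\omega$. The only nontrivial input is the Borell--TIS bound, i.e.\ the integrability and sub-Gaussian concentration of the supremum of a centered continuous Gaussian process on a compact set; every remaining step is elementary Borel--Cantelli and compactness.
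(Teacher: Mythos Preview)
Your proof is correct but takes a different route from the paper for the first claim \eqref{eqn:eta_lim}. The paper simply invokes a theorem of Marcus stating that for a continuous stationary Gaussian process with unit variance one has $\limsup_{s\to\infty}|\omega(s)|/\sqrt{2\log s}\le 1$ almost surely, from which sublinearity is immediate. Your argument is instead self-contained: Borell--TIS gives sub-Gaussian tails for $\sup_{[0,1]}|\eta|$, and stationarity plus Borel--Cantelli yields $|\omega(s)|=O(s^{1/2+\delta})$ almost surely for any $\delta>0$. The trade-off is that the paper's route is a one-line citation delivering a sharper growth bound than the lemma actually needs, while your route avoids external literature and uses only a standard Gaussian concentration inequality to produce exactly the $o(s)$ estimate required. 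The second half, deducing the affine bound \eqref{eqn:eta_ineq} from \eqref{eqn:eta_lim} by continuity on a compact initial interval, is identical in both proofs.
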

\begin{proof}
Since $\eta(s)$  is a continuous stationary Gaussian process with $E[\eta(s)^2] = 1$, it follows  from    \cite[Theorem 1.4]{marcus1972upper}  that for every realization $\omega$ we have \[\limsup_{s\to +\infty}\frac{|\omega(s)|}{\sqrt{2\log(s)}}\leq 1.\]
This implies that \[\lim_{s\to+\infty}\frac{|\omega(s)|}{s}=0.\]
Since for almost every $\omega\in\Omega$, $\omega$ is continuous in $s$, then for any given $B>0$ there exists $T_{\omega}>0$ such that $\frac{|\omega(s)|}{s}<B$ for all $s\geq T_{\omega}$.
Let $A_\omega>\sup \{|\omega(s)|\,|\, s\in [0,T_{\omega}]\}$.  Then $|\omega(s)|\le A_\omega+Bs$ for all $s\geq 0$.
Note that $A_\omega$ depends on $\omega$, while $B$ does not; moreover, $B>0$ can be chosen arbitrarily small.
\end{proof}

\begin{figure}
\includegraphics[width=0.5\textwidth]{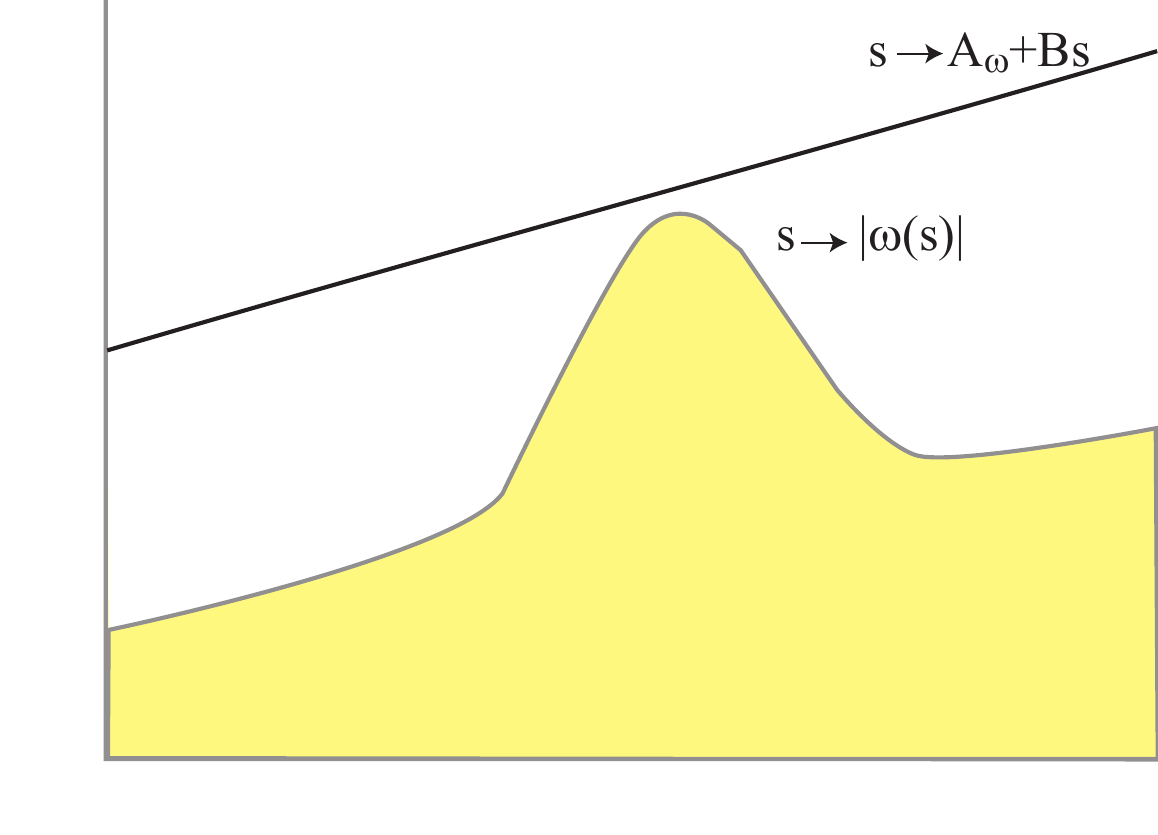}
\caption{Sub-linearity of the noise}
\label{fig:Sets_1}
\end{figure}

The meaning of Lemma \ref{lem:eta} is that the graph of $|\omega(s)|$ for $s\ge 0$ is below the line $s\mapsto  A_\omega+Bs$, where $B$ is a fixed slope independent of the path $\omega$, and $A_\omega$ is the vertical intercept  of the line and depends on the path. The slope $B$ can be chosen arbitrarily close to $0$, and is fixed once and for all.

We define some relevant sets and describe relations among them.

A consequence of Lemma \ref{lem:eta} is that for a.e. $\omega\in \Omega$, there exist $B>0$ and  $A=A_\omega>0$ such that
 \begin{equation}\label{eqn:eta_ineq_abs} |\omega(s) |\le A_\omega+B|s| \textrm{ for all } s\in\mathbb{R}.  \end{equation}
Without any loss of generality, by disregarding a measure zero set of paths,  we can assume that this property is true for all $\omega\in\Omega$.

For any $A>0$ fixed, define the set:
\begin{equation}\label{eqn:Omega_A}
\Omega_A=\{\omega\in\Omega\,|\,|\omega(s) |\le A+B|s|,\forall s\in\mathbb{R}\}.
\end{equation}

Since $s\mapsto |\omega(s)|$ is a continuous function, the set $\Omega_A$ is a measurable subset of $\Omega$, and
\[A_1\le A_2\implies \Omega_{A_1}\subseteq \Omega_{A_2}.\]
Since $\bigcup_{A>0}\Omega_{A}=\Omega$, from the continuity from below of the measure $\mathbb{P}$, 
we have that $\lim_{A\to\infty} \mathbb{P}(\Omega_A)=\mathbb{P}(\Omega)=1$, therefore
\begin{equation}\label{eqn:Omega_A_delta}
\forall \delta>0,\,\exists A_\delta>0 \textrm{ s.t. } \mathbb{P}(\Omega_A)>1-\delta.
\end{equation}

Note that if $\omega \in\Omega_A$ and $t\neq 0$, it does not follow that $\theta^t\omega\in\Omega_A$.
It  is also possible that $\omega \not\in \Omega_A$ and for some $t\neq 0$ we have $\theta^t\omega\in\Omega_A$.
That  is to say that  the sets $\Omega_A$ are not invariant under the shift $\theta^t$. See  Fig.~\ref{fig:noise_bounds}.

\begin{figure}
\includegraphics[width=0.95\textwidth]{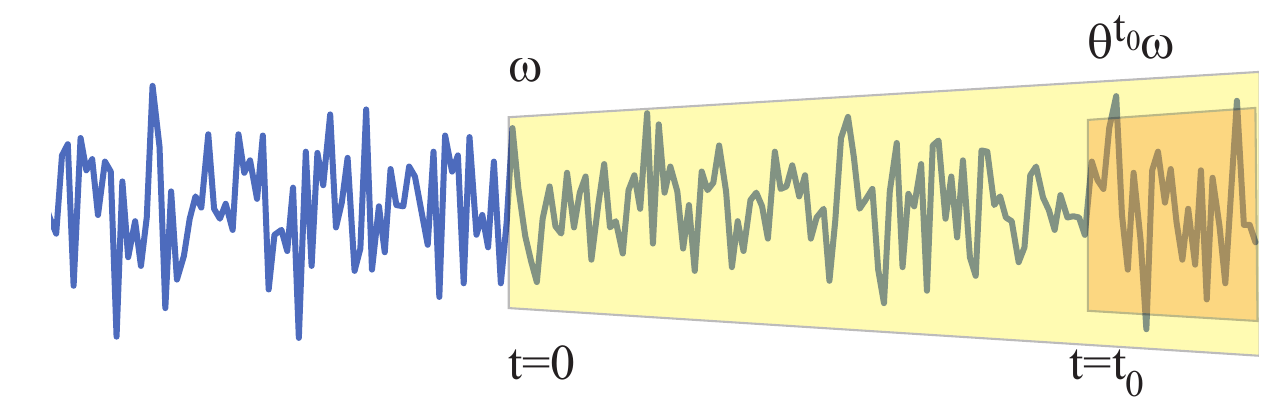}
\caption{The path  $\omega$  is inside $\Omega_A$ while the path $\theta^{t_0}\omega$ is not inside $\Omega_A$.}
\label{fig:noise_bounds}
\end{figure}

On the other hand,   given any $\omega$ (up to a measure zero set of paths in $\Omega$), the  Birkhoff ergodic theorem implies that there exists a bi-infinite sequence of times $\{t_n\}_{n\in\mathbb{Z}}$ such that
\begin{equation}\label{eqn:t_n}
  \theta^{t_n}\omega \in \Omega_A,  \textrm{ for all } n \in\mathbb{Z}.
\end{equation}
See \cite{lu2011chaotic}. 
That is, for a.e. path $\omega$, the orbit of $\omega$ under the metric dynamical system $\theta^t:\Omega\to\Omega$ visits $\Omega_A$ infinitely often.

For any $A>0$ and $\omega\in\Omega$ fixed, define the set
\begin{equation}\label{eqn:Q_A_omega}
\begin{split}
Q_A(\omega)=&\{t\in\mathbb{R}\,|\, \theta^t \omega\in\Omega_A \}\\
=&\{t\in\mathbb{R}\,|\,|\omega(t+s) |\le  A+B|s|,\forall s\in\mathbb{R}\}.
\end{split}
\end{equation}

For any $T>0$, $A>0$ and $\omega\in\Omega$ fixed, we now define the following subset of $Q_{A}(\omega)$
\begin{equation}\label{eqn:Q_AT_omega}
\begin{split}
Q_{A,T}(\omega)=&\{t\in [-T,T]\,|\, \theta^t \omega \in\Omega_A \}\\
=&\{t\in [-T,T]\,|\,|\omega(t+s) |\le A+B|s|,\forall s\in\mathbb{R}\}.
\end{split}
\end{equation}
For $\delta>0$, and $A_\delta$ as in \eqref{eqn:Omega_A_delta}, we denote the   set in \eqref{eqn:Q_AT_omega} corresponding to $A_\delta$ by
$Q_{A_\delta,T}(\omega)$.

We  have the following monotonicity property:
\begin{equation}\label{eqn:Q_A_monotone}
  T_1\le T_2 \implies Q_{A,T_1}(\omega)\subseteq Q_{A,T_2}(\omega).
\end{equation}

We now recall \cite[Lemma 3.2]{yagasaki2018melnikov}.

\begin{lem}\label{lem:Yagasaki}
For any $\delta > 0$ there exists a random variable  $T_\delta(\omega)$
such that \begin{equation}
\label{eqn:Q_A_measure} T>T_\delta(\omega) \Rightarrow m(Q_{A_\delta,T}(\omega))>2(1-\delta)T,\end{equation}
where $m$ denotes the Lebesgue measure on $\mathbb{R}$.
\end{lem}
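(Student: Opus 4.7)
The plan is to view $m(Q_{A_\delta,T}(\omega))$ as a Birkhoff time-average of the indicator $\mathbf{1}_{\Omega_{A_\delta}}$ along the measure-preserving flow $\theta^t$ on $\Omega$, and then apply the ergodic theorem.

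First I would rewrite the Lebesgue measure of the set \eqref{eqn:Q_AT_omega} as
\begin{equation*}
m(Q_{A_\delta,T}(\omega))=\int_{-T}^{T}\mathbf{1}_{\Omega_{A_\delta}}(\theta^t\omega)\,dt.
\end{equation*}
This requires checking that $\Omega_{A_\delta}\in\mathscr{F}$, which follows from the definition \eqref{eqn:Omega_A} together with the (almost sure) continuity of sample paths under \eqref{eqn:noise_3}, since the event $\{|\omega(s)|\le A+B|s|\text{ for all }s\in\mathbb{R}\}$ can be written as a countable intersection over a dense set of $s$. It also requires the measurability of $(t,\omega)\mapsto\mathbf{1}_{\Omega_{A_\delta}}(\theta^t\omega)$, which is standard for a measure-preserving flow.

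Next I would invoke ergodicity. By the Maruyama theorem \eqref{eqn:ergodic}, the shift flow $(\Omega,\theta^t,\mathbb{P})$ is ergodic. The continuous-time Birkhoff ergodic theorem, applied to the bounded measurable function $\mathbf{1}_{\Omega_{A_\delta}}\in L^1(\Omega,\mathbb{P})$, then gives, for $\mathbb{P}$-almost every $\omega\in\Omega$,
\begin{equation*}
\lim_{T\to\infty}\frac{1}{2T}\int_{-T}^{T}\mathbf{1}_{\Omega_{A_\delta}}(\theta^t\omega)\,dt=\mathbb{E}[\mathbf{1}_{\Omega_{A_\delta}}]=\mathbb{P}(\Omega_{A_\delta}).
\end{equation*}
By the choice of $A_\delta$ in \eqref{eqn:Omega_A_delta}, we have $\mathbb{P}(\Omega_{A_\delta})>1-\delta$, so the limit on the right is strictly larger than $1-\delta$.

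Finally I would convert the pointwise limit into the uniform-in-$T$ estimate of the lemma by setting
\begin{equation*}
T_\delta(\omega):=\inf\Bigl\{T_0>0\;\Big|\;\frac{1}{2T}\int_{-T}^{T}\mathbf{1}_{\Omega_{A_\delta}}(\theta^t\omega)\,dt>1-\delta\text{ for all }T\ge T_0\Bigr\},
\end{equation*}
which is finite for almost every $\omega$ by the previous step and is a random variable because the family of averages depends measurably on $\omega$. For any $T>T_\delta(\omega)$, rewriting the average back in terms of Lebesgue measure yields $m(Q_{A_\delta,T}(\omega))>2(1-\delta)T$, giving \eqref{eqn:Q_A_measure}.

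The only subtle point I anticipate is the measurability/selection of $T_\delta(\omega)$ as a genuine random variable, and the fact that we need the strict gap $\mathbb{P}(\Omega_{A_\delta})>1-\delta$ (not merely $\ge 1-\delta$) in order for the threshold $T_\delta(\omega)$ to be almost surely finite; both are handled by choosing $A_\delta$ slightly larger than the minimal value satisfying \eqref{eqn:Omega_A_delta}, so that the ergodic limit strictly exceeds the desired bound.
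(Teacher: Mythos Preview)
Your proposal is correct and follows essentially the same approach as the paper: express $m(Q_{A_\delta,T}(\omega))$ as a time-average of the indicator $\chi_{\Omega_{A_\delta}}$ along the ergodic flow $\theta^t$, apply the Birkhoff ergodic theorem to identify the limit as $\mathbb{P}(\Omega_{A_\delta})>1-\delta$, and then take $T_\delta(\omega)$ large enough. The only cosmetic difference is that the paper handles the forward and backward time-averages separately and then combines them, whereas you use the two-sided average directly.
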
\begin{proof}
Let $\chi_A$  be the characteristic function of a set $A$.
We have:
\[ m(\{{t}\in(0,T)\,|\, \theta^{t}\omega\in \Omega_{A_\delta} \})= \int_{0}^{T}\chi_{\Omega_{A_\delta}}(\theta^{s}\omega)ds.\]

Using the ergodicity and stationarity of $\eta(t)$, as well as \eqref{eqn:Omega_A_delta}, we obtain
\[\lim_{T\to\infty}\frac{1}{T}\int_{0}^{T}\chi_{\Omega_{A_\delta}}(\theta^{s}\omega)ds=
E[\chi_{\Omega_{A_\delta}}(\omega)]=\mathbb{P}(\Omega_{A_\delta})>1-\delta,\]

A similar result  holds when we take the limit as $T\to-\infty$.
Combining the two results concludes the proof.
\end{proof}

\subsection{Construction of a random bump function}
\label{sec:random_bump_function}
For each $t\in\mathbb{R}$,  and $\omega\in\Omega$ and $A>0$ we define the following sets
\begin{equation}\label{eqn:C_set}
  C_{A}(\omega)=\{s\in\mathbb{R}\,|\, |\omega(s)|\le A+B|s|\}
\end{equation}
which is a closed set in $\mathbb{R}$, and for $\rho>0$ small,
 \begin{equation}\label{eqn:U_set}
  U_{ A,\rho}(\omega)=\bigcup_{s\in C_{A}(\omega)} (s-\rho,s+\rho)
\end{equation}
which is a $\rho$-neighborhood of $C_{A}(\omega)$.

Let
 \begin{equation}\label{eqn:U_set}
 F_{ A,\rho}(\omega)= \mathbb{R}\setminus U_{ A,\rho}(\omega)
\end{equation}
which is a closed set in $\mathbb{R}$.  See Fig.~\ref{fig:Sets_2}.

\begin{figure}
\includegraphics[width=0.5\textwidth]{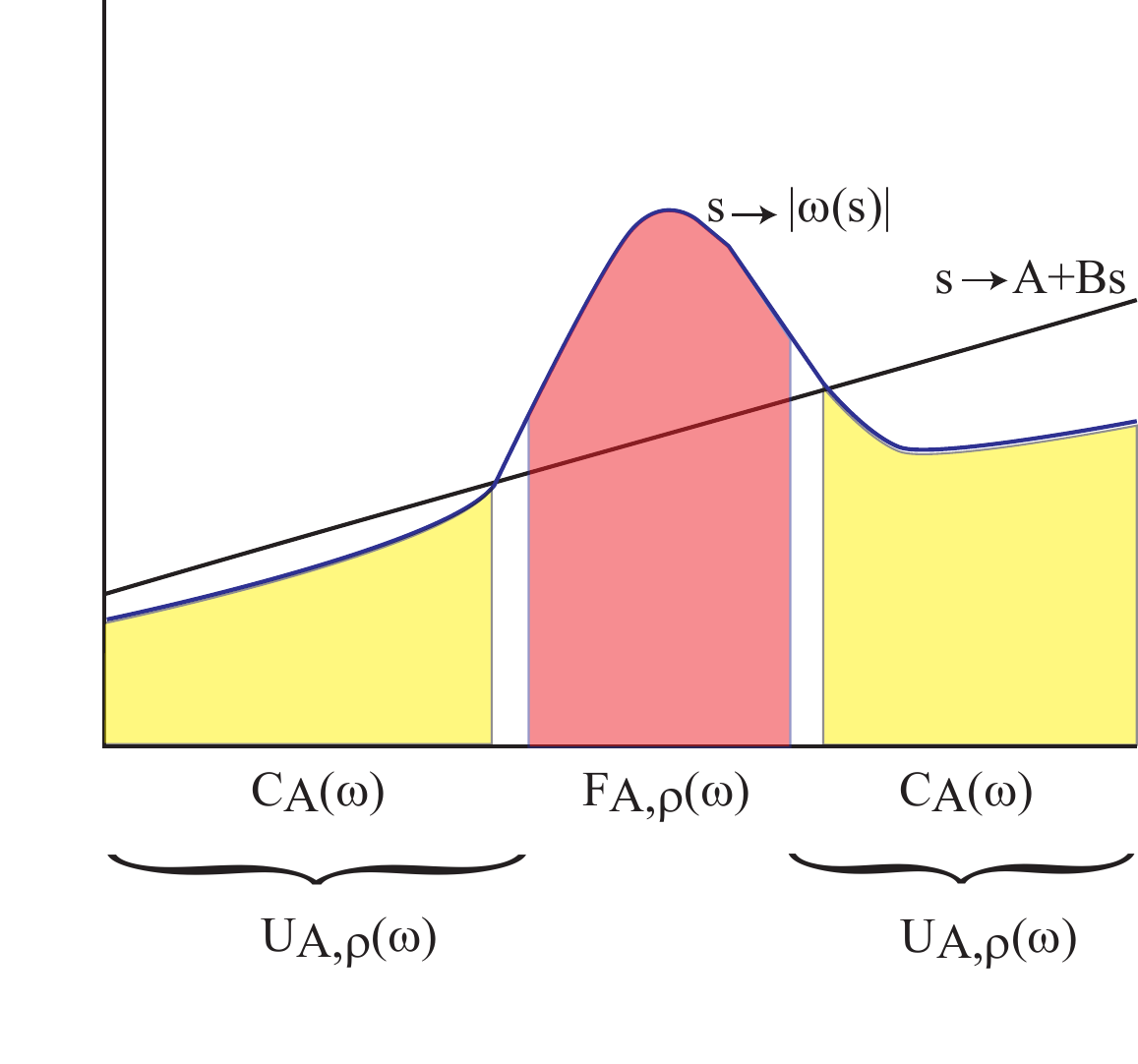}
\caption{Sets involved in the construction of a random bump function. }
\label{fig:Sets_2}
\end{figure}

It is clear that:
\begin{equation}\label{eqn:C_U_sets}
  C_{ A}(\omega)\subseteq U_{ A,\rho}(\omega) \textrm { and } d_H(C_{ A}(\omega), F_{ A,\rho}(\omega))=\rho,
\end{equation}
where $d_H$ refers to the Hausdorff distance.

If $s'\in \textrm{Cl}(U_{ A,\rho}(\omega))$, then there exists $s\in C_{A}(\omega)$ such that $|s-s'|\le \rho$, therefore,
by the H\"older property \eqref{eqn:Holder} of $\omega(t)$, we have
\begin{equation}\label{eqn:omega_U _minus C}
  |\omega(s)-\omega(s')|\le C_H|s-s'|^\alpha \le  C_H \rho^\alpha,
\end{equation}
hence
\begin{equation}\label{eqn:}
 \begin{split}
|\omega(s')|\le& |\omega(s')-\omega(s)|+|\omega(s)|\le C_H\cdot \rho^\alpha+A+B|s|\\\le&A'_{\rho}+B|s|,
 \end{split}
\end{equation}
where $A'_{\rho}:=C_H \cdot \rho^\alpha+A$.

Note that for $t\in\mathbb{R}$ we obviously have
\begin{equation}\label{eqn: C_U_theta_omega}
\begin{split}
  C_{A}(\theta^t\omega)=&\{s\in\mathbb{R}\,|\, |\theta^t \omega(s)|\le A+B|s|\}
\\=& \{s\in\mathbb{R}\,|\, |\omega(t+s)|\le A+Bs\},\\
\textrm{Cl}(U_{ A,\rho}(\omega))\subseteq&\{s \in\mathbb{R}\,|\, |\theta^t \omega(s)|\le A'_{\rho} +B|s|\}
\\ \subseteq& \{s\in\mathbb{R}\,|\, |\omega(t+s)|\le  A'_{\rho} +B|s|\}.
\end{split}
\end{equation}

We construct {\em  random bump function} $\psi_A:\mathbb{R}\to\mathbb{R}$ with $\psi_A(s,\omega)$  being a measurable function in $\omega$ for each $s$ fixed, and a $\C^\infty$ function in $s$   for each $\omega$ fixed,  such that
\begin{equation}\label{eqn:psi}
\begin{split}
 \psi_{A,\rho}(\cdot,\omega):\mathbb{R}\to[0,1],\\
  \psi_{A,\rho}(s,\omega)=\left\{
                           \begin{array}{ll}
                             1, & \hbox{for $s\in C_{A}(\omega)$,} \\
                             0, & \hbox{for $s\in F_{A,\rho}(\omega)$,}
                           \end{array}
                         \right.\\
\|D_s\psi_{A,\rho}(s,\omega)\|_{C^0}\le \frac{1}{\rho},
\end{split}
\end{equation}
where $D_s$ denotes the derivative with respect to $s$.

The latter condition comes from the fact that a slope of a line that changes from $0$ to $1$ within an interval of at least $\rho$ is at most
$\frac{1}{\rho}$. Therefore the upper bound on the first derivative of a bump function that is equal to $1$ on $C_{A}(\omega)$ and is supported on the closure of the $\rho$-neighborhood  $U_{ A,\omega}$ of $C_{A}(\omega)$ is at most $\frac{1}{\rho}$.  Recipes to construct such bump functions can be found in  \cite{nestruev2003smooth}.

Therefore,
\begin{equation}\label{eqn:psi_explicit}
\begin{split}
  \psi_{A,\rho}(s,\omega)=\left\{
                           \begin{array}{ll}
                             1, & \hbox{if $|\omega(s)|\le A+B|s|$,} \\
                             0, & \hbox{if  $|\omega(s)|\ge  A'_{\rho}+B|s|$,}
                           \end{array}
                         \right.
\end{split}
\end{equation}

Consequently, for $t\in\mathbb{R}$, we have
\begin{equation}\label{eqn:psi_shifted explicit}
\begin{split}
  \psi_{A,\rho}(s,\theta^t\omega)=\left\{
                           \begin{array}{ll}
                             1, & \hbox{if $|\omega(t+s)|\le A+B|s|$,} \\
                             0, & \hbox{if  $|\omega(t+s)|\ge  A'_{\rho}+B|s|$.}
                           \end{array}
                         \right.
\end{split}
\end{equation}

Note that if $t\in Q_{A}(\omega)$ then $|\omega(t+s)|\le A+B|s|$ for all $s$, hence $\psi_{A,\rho}(\cdot,\theta^t\omega)\equiv 1$.

\section{Geometric structures of  the unperturbed system}

\subsection{Coordinate system for the unperturbed  rotator-pendulum system}
\label{sec:coordinate_system}
We introduce a new coordinates system defined in a neighborhood of the homoclinic orbit $(p_0(t),q_0(t))$ of the pendulum
in the $(p,q)$-phase space, as we describe below.

Choose some fixed reference value $q_*\neq 0$ of the position coordinate $q$ of the pendulum.
The first coordinate of a point $(p,q)$  is
the  pendulum energy level $P(p,q)=\frac{1}{2}p^2+V(q)$ on which the point lies.
The second coordinate represents the time it takes for the solution $(p(\tau),q(\tau))$ to go from  $q_*$  to $(p,q)$ along the energy level $P$ corresponding to $(p,q)$. Note that for a given initial position $q_*$  the corresponding initial momentum $p_*$ is uniquely determined, up to sign,  by the energy condition $P(p_*,q_*)=P(p,q)$.
In order for the coordinate change $(p,q)\to (P,\tau)$ to be well defined,  we restrict to a  neighborhood $\mathcal{N}$  of the  homoclinic orbit $\{p_0(t),q_0(t)\}$ that does not contain any critical point of $P$, of the form
\[\mathcal{N}=\{(p,q)\,|\,  P_1<P<P_2, q_1<q<q_2\}\]
for some $P_1<0<P_2$ and $0<q_1<q_*<q_2< 1$.

The coordinate change $(p,q)\mapsto (P,\tau)$ is canonical, i.e., $dp\wedge dq=dP\wedge d\tau$ for $(p,q)\in\mathcal{N}$; see \cite{gidea2018global}, also \cite{gidea2021global}.
While the above coordinate change is only defined on $\mathcal{N}$,  the energy of the pendulum  $P$ as a function of $(p,q)$ is well defined at all points $(p,q)$.

For the rotator-pendulum system we  have the canonical coordinates $(I,\phi,P,\tau)$ for $(p,q)\in\mathcal{N}$.
In these coordinates the Hamiltonian $H_0$ is given by
\begin{equation}\label{eqn:H_0_I_theta_P_tau}
H_0(I,\phi,P,\tau)=h_0(I)+P.
\end{equation}

In Section \ref{sec:extended phase space} below, we will consider the extended phase space, by considering time $\t$ as an additional phase-space coordinate.
In this case we have the system of coordinates  $(I,\phi,P,\tau,\t)$ for $(p,q)\in\mathcal{N}$.

\subsection{Normally hyperbolic invariant manifold for the unperturbed  rotator-pendulum system}
\label{section:NHIM_ unperturbed rotator_pendulum}

Consider the unperturbed rotator-pendulum system given by $H_0$.

The point $(0,0)$ is a hyperbolic fixed point for the pendulum,  the  characteristic
exponents are $\beta=(-V''(0))^{1/2}>0$, $-\beta=-(-V''(0))^{1/2}<0$,
 and the corresponding unstable/stable eigenspaces are $E^\un=\textrm{Span}(v^u)$,
 $E^\st=\textrm{Span}(v^s)$, where
$v^\un=(-(-V''(0))^{1/2},1)$, $v^\st=((-V''(0))^{1/2},1)$.

Also, define
\begin{equation}\label{eqn:def_Eus} \begin{split} E^\un_z=& \{z\}\times \textrm{Span}(v^\un),\\
E^\st_z=& \{z\}\times \textrm{Span}(v^\st).\end{split} \end{equation}

It immediately follows that for  each closed interval $[a,b]\subseteq \R$, the set
\begin{equation}
\label{eqn:pendulm_NHIM}
\Lambda_0=\{(I,\phi, p,q)\,|\, I\in [a,b],\, p=q=0\}
\end{equation}
is a NHIM with boundary, where
the unstable and stable spaces $E^\un_z$ and
$E^\st_z$ at $z\in\Lambda_0$  are  given by \eqref{eqn:def_Eus}, respectively, and
the rates that appear in the definition of a NHIM are given by
$\beta$ for $E^\st$, $-\beta$ for $E^\un$,
and $\alpha=0$  for $T\Lambda_0$.

The stable and unstable manifolds of $\Lambda_0$ are denoted by $W^\st(\Lambda_0)$ and $W^\un(\Lambda_0)$,
respectively. They are $3$-dimensional manifolds, and $W^\st(\Lambda_0)=W^\un(\Lambda_0)$.
Relative to the  $(I,\phi,P,\tau)$ coordinates they can be locally written as graphs over the $(I,\phi,\tau)$ variables.
See, e.g.,  \cite{gidea2018global,gidea2021global}.

\subsection{Extended phase space}
\label{sec:extended phase space}
The system \eqref{eqn:hamiltonian_perturbed} is non-autonomous. We transform it into an autonomous system by making the time into an additional dependent variable (or additional phase space coordinate) $\t$, and denoting the independent variable by $t$:
\begin{equation}\label{eqn:hamiltonian_extended}
\begin{split}\frac{d}{dt}{z}
=& X^0(z)+\eps X^1(z,\eta(\t)),\\
\frac{d}{dt}{\t}=&1.
\end{split}
\end{equation}
We denote by $\tilde{\Phi}^t_\eps$ the flow for the (autonomous) extended system and $\tz=(z,\t)$.
The solution with $\tz(t_0)=(z_0,t_0)$ is given by
\begin{equation}\label{eqn:extended_flow_0}
  (z_\eps(t;z_0,t_0,\omega),t)=(\Phi^{t-t_0}_\eps(z_0,\theta^{t_0}\omega),t),
\end{equation}
where the flow $\Phi_\eps$ associated to  \eqref{eqn:hamiltonian_perturbed} is defined as in \eqref{eqn:RDE_RDS}.
It is easy to see that when $t=t_0$ we have $(z(t_0;z_0,t_0,\omega),t) =(\Phi^{0}_\eps(z_0,\theta^{t_0}\omega),t_0)=(z_0,t_0)$, as expected.

The extended flow of \eqref{eqn:hamiltonian_extended} is defined as
\begin{equation}\label{eqn:extended_flow}
\tilde\Phi^t_\eps(z_0,t_0, \omega)=(z(t;z_0,0,\omega),t+t_0)=(\Phi^t_\eps(z_0,\omega), t_0+t).
\end{equation}


The extended flow for the unperturbed system is
\[\tilde\Phi^t_0(z_0,t_0)=(z(t;z_0,0),t+t_0)=(\Phi^t_0(z_0), t_0+t).\]

For $\eps>0$, the expression of  the perturbed  flow $\tilde{\Phi}^t_\eps$ in the time-component  is the same as in the unperturbed case when $\eps=0$.


\section{Geometric structures of the perturbed system}

From now on, we will use the following notation convention: $t=$physical time, $\t=$time as an additional coordinate in the extended space, $s=$dummy variable.

For any given $\omega$, consider the set
\[Q_{A,T}(\omega)=\{t_0\in[-T,T]\,|\, |\theta^{t_0}\omega(s)|\le A+B|s|,\,\forall s\in\mathbb{R} \}.\]

If $t_0\in Q_{A,T}(\omega)$ then $\theta^{t_0}\omega\in \Omega_{A}$ and so $|\omega(t_0+s)|\le A+B|s|$ for all $s$.
Equivalently, $C_{A} (\theta^{t_0}\omega)=\mathbb{R}$.

If $t_0\in \mathbb{R}\setminus Q_{T,A}(\omega)$ we have the following possibilities:
\begin{itemize}
\item if $s\in C_{A}(\theta^{t_0}\omega)$ then $|\omega(t_0+s)|\le A+B|s|$;
\item  if $s\in \textrm{Cl}(U_{A,\rho}(\theta^{t_0} \omega ))$ then  $|\omega(t_0+s)|\le A'_\rho+B|s|$;
\item  if $|\omega(t_0+s)|\ge A'_\rho+B|s|$ then $s\in F_{A,\rho}(\theta^{t_0} \omega )$.
\end{itemize}

The corresponding  bump function \eqref{eqn:psi} is  $\psi_{A,\rho}(s,\theta^{t_0}\omega)$.  We have
\begin{equation}\label{eqn:}
  \psi_{A,\rho}(s,\theta^{t_0}\omega)=\left\{
                                    \begin{array}{ll}
                                      1, & \hbox{for $s\in C_{A}(\theta^{t_0}\omega)$;} \\
                                      0, & \hbox{for $s\in F_{A,\rho}(\theta^{t_0} \omega )$.}
                                    \end{array}
                                  \right.
\end{equation}

In particular, for $t_0=0$ we have
\begin{equation}\label{eqn:}
  \psi_{A,\rho}(s, \omega)=\left\{
                                    \begin{array}{ll}
                                      1, & \hbox{for $s\in C_{A}(\omega)$;} \\
                                      0, & \hbox{for $s\in F_{A,\rho}(\omega )$.}
                                    \end{array}
                                  \right.
\end{equation}

Now, we modify the vector field $X^1_\eta(z,\t)$ by multiplying the Hamiltonian function by the random bump function $(\t,\omega)\mapsto \psi_{A}(\t,\omega)$ defined on the extended space:
\begin{equation}\label{eqn:m}
  \hat X^1_\omega(z,\t)=J\nabla \left(\psi_{A,\rho}(\t,\omega)H_1(z)\right)\omega(\t) .
\end{equation}

(Note that in the notation for the bump function we switched  from the dummy variable $s$ to the time-coordinate~$\t$.)

We have
\begin{equation}\label{eqn:X_1 hat}
  \begin{split}
 \hat X^1_\omega(z,\t)= \left\{
                         \begin{array}{ll}
                            X^1_\omega(z,\t), & \hbox{for $\t\in C_{A}(\omega)$,} \\
                           0, & \hbox{for $\t\in  F_{A,\rho}(\omega )$.}
                         \end{array}
                       \right.
\end{split}
\end{equation}

The modified system is
\begin{equation}\label{eqn:z_hat}
\begin{split}
\frac{d}{dt}\hat z _\eps=&J\nabla H_0(\hat z_\eps)+J\nabla \left(\psi_{A}(\t,\omega)H_1(\hz_\eps)\right)\omega(\t) , \\
\frac{d}{dt}\t=&1 .
\end{split}
\end{equation}

%

For fixed $\omega$ and $t_0\in \mathbb{R}$, by \eqref{eqn:Caratheodory} and \eqref{eqn:RDE_RDS_2}, the solution of  \eqref{eqn:z_hat}  satisfies
\begin{equation}\label{eqn:z_hat_integral}
\begin{split}
\hz_\eps&(t;z_0,t_0,\omega)= \hz_\eps(t-t_0;z_0,0,\theta^{t_0}\omega)= \pi_z[\hat\Phi^{t-t_0}_\eps(z_0,0,\theta^{t_0}\omega)]\\&=z_0+\int_{0}^{t-t_0}\left[ J\nabla H_0(\hz_\eps(s))+J\nabla \left(\psi_{A,\rho}(s,\theta^{t_0}\omega)H_1(\hz_\eps(s))\right)\theta^{t_0}\omega(s)\right] ds.
\end{split}
\end{equation}

If we denote the elapsed time $t-t_0=t'$, we have
\begin{equation}\label{eqn:z_hat_integral_prime}
\begin{split}
\pi_z[\hat\Phi^{t'}_\eps(z_0,0,\theta^{t_0}\omega)]=z_0+\int_{0}^{t'}&\left[ J\nabla H_0(\hz_\eps(s))\right.\\
&\left.+J\nabla \left(\psi_{A,\rho}(s,\theta^{t_0}\omega)H_1(\hz_\eps(s))\right)\omega(t_0+s)\right] ds.
\end{split}
\end{equation}

For $t_0\in  Q_{A,T}(\omega)$, the solution $\hat\Phi^{t'}_\eps(z,\theta^{t_0}\omega)$ of the modified system \eqref{eqn:z_hat} coincides with the solution   $\tilde\Phi^{t'}_\eps(z,\theta^{t_0}\omega)$ of the original system
\eqref{eqn:hamiltonian_extended}.

\subsection{Persistence of the NHIM}
\label{sec:persistence_NHIM}
The result below will be used to prove Theorem \ref{thm:random_NHIM}.
The first part of the result says that the flow of the modified  perturbed system is close to the flow of the unperturbed system.
The second part of the result says that for the distinguished set of times, the  modified  perturbed flow coincides with the original  perturbed flow.

\begin{prop}\label{prop:perturbed_flow}
Assume that the system \eqref{eqn:hamiltonian_perturbed} satisfies (P-i), (P-ii), (R-i), (R-ii), (R-iii) (but not necessarily (H1)).
Fix $\delta>0$, $\rho>0$, and $k\in(0,1)$.

Then, there exists  $\eps_0>0$ such that,  for   every $\eps$ with $0<\eps<\eps_0$, all $(z_0,t_0)$, and  a.e. $\omega \in\Omega$, for the modified system we have
\begin{equation}\label{eqn:perturbed_flow}
\|\hat{\Phi}^t_\eps(z_0,t_0,\omega)-\hat{\Phi}^t_0(z_0,t_0)\|_{C^1}\leq   C\eps^{1-c} \textrm { for } t\in [0,1].
\end{equation}

In particular,  there exist a positive random variable $T_\delta(\omega)$, a closed set  $Q_{A_\delta,T_\delta}(\omega)\subseteq [-T_\delta, T_\delta]$ satisfying \eqref{eqn:Q_A_measure} and \eqref{eqn:Q_A_monotone},
and  $\eps_0>0$ such that,  for   every $\eps$ with $0<\eps<\eps_0$, all $(z_0,t_0)$,   a.e. $\omega \in\Omega$, and  every $t_0\in Q_{A_\delta,T_\delta}(\omega)$,   the solution of the modified system and the solution for the extended system coincide, i.e.,
\begin{equation}\label{eqn:modified original}
 \hat{\Phi}^t_\eps(z_0,t_0,\omega)=\tilde{\Phi}^t_\eps(z_0,t_0,\omega) \textrm{ for all } t\in\mathbb{R},
\end{equation}
and therefore
\begin{equation}\label{eqn:perturbed_flow_2}
  \|\tilde{\Phi}^t_\eps(z_0,t_0,\omega)-\tilde{\Phi}^t_0(z_0,t_0)\|_{C^1}\leq   C\eps^{1-c} \textrm { for } t\in [0,1].
 \end{equation}
\end{prop}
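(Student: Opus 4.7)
The plan is to establish the two conclusions separately: first, the $\C^1$-closeness bound \eqref{eqn:perturbed_flow}, then the coincidence \eqref{eqn:modified original} on $Q_{A_\delta,T_\delta}(\omega)$, from which \eqref{eqn:perturbed_flow_2} follows immediately.

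For the closeness estimate, I would start from the integral representation \eqref{eqn:z_hat_integral_prime} of the modified flow and subtract the corresponding identity for $\hat{\Phi}^t_0$. The central observation is that by the very construction \eqref{eqn:psi_explicit} of the bump function, the product $\psi_{A,\rho}(s,\omega)\,\omega(s)$ is pointwise dominated by $A'_\rho+B|s|$, uniformly in $\omega$---even though $\omega(s)$ itself is unbounded. On the window $s\in[0,1]$ required by Theorem \ref{thm:Bates1}, this yields a uniform pointwise bound of order $\eps(A'_\rho+B)\|H_1\|_{\C^1}$ on the perturbation term. A Gronwall inequality then gives the $\C^0$ closeness of the flows, and the same argument applied to the variational equation in $z_0$, using $\|H_1\|_{\C^2}$, upgrades this to the $\C^1$ closeness. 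The exponent $1-c$ is kept for flexibility: allowing $\rho$ to depend mildly on $\eps$, or $|t_0|$ to grow like $\eps^{-c}$, the same Gronwall argument still yields $C\eps^{1-c}$ with constants depending only on $\delta,\rho,k$. Because Theorem \ref{thm:Bates1} asks for closeness only in the $z$-direction, the non-differentiability of $\omega$ in $\t$ never enters.

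For the coincidence, let $t_0\in Q_{A_\delta,T_\delta}(\omega)$. By the definitions \eqref{eqn:Omega_A} and \eqref{eqn:Q_AT_omega}, $\theta^{t_0}\omega\in\Omega_{A_\delta}$, i.e., $|\theta^{t_0}\omega(s)|\le A_\delta+B|s|$ for every $s\in\mathbb{R}$. By \eqref{eqn:psi_explicit}, this is exactly the condition that forces $\psi_{A_\delta,\rho}(s,\theta^{t_0}\omega)=1$ for every $s$, so the bump factor in \eqref{eqn:z_hat_integral_prime} is identically $1$ and the modified equation reduces to the original perturbed equation \eqref{eqn:hamiltonian_perturbed} driven by $\theta^{t_0}\omega$. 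Uniqueness of Carath\'eodory solutions then yields \eqref{eqn:modified original}, and inserting this into \eqref{eqn:perturbed_flow} produces \eqref{eqn:perturbed_flow_2}. The random variable $T_\delta(\omega)$ and the set $Q_{A_\delta,T_\delta}(\omega)$ with the claimed properties \eqref{eqn:Q_A_measure}--\eqref{eqn:Q_A_monotone} are supplied by Lemma \ref{lem:Yagasaki} together with the monotonicity built into the definition \eqref{eqn:Q_AT_omega}.

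The hard part will be maintaining uniformity in $\omega$ even though $\omega(s)$ is unbounded. A direct estimate involving $|\omega(s)|$ would blow up with $|s|$, destroying any uniform closeness. The cutoff $\psi_{A,\rho}$ is precisely the device that converts the unbounded noise into the tame quantity $\psi_{A,\rho}(s,\omega)\omega(s)\le A'_\rho+B|s|$, at the cost of a large derivative $\|D_s\psi_{A,\rho}\|_{\C^0}\le 1/\rho$ localized on $U_{A,\rho}(\omega)\setminus C_A(\omega)$. Crucially, because $\psi_{A,\rho}$ depends only on $s$ and not on $z$, this derivative does not enter the variational equation used to estimate $D_{z_0}\hat{\Phi}^t_\eps$; it would only affect $\t$-derivatives, which Theorem \ref{thm:Bates1} does not require. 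Confirming this decoupling and verifying that the Gronwall constants depend only on the fixed parameters $\delta,\rho,k$ (and not on $\omega$, on a full-measure event) is the careful bookkeeping step that must be carried out.
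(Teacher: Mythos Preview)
Your outline is correct and matches the paper's proof: write the modified flow in integral form, use that the bump function forces $|\psi_{A_\delta,\rho}(s,\theta^{t_0}\omega)\,\omega(t_0+s)|\le A'_{\delta,\rho}+B|s|$ uniformly in $\omega$ and $t_0$, apply Gronwall for the $\C^0$ estimate, repeat on the variational equation for the $\C^1$ estimate, and then observe that $t_0\in Q_{A_\delta,T_\delta}(\omega)$ forces $\psi_{A_\delta,\rho}(\cdot,\theta^{t_0}\omega)\equiv 1$, so the modified and original systems coincide. Your remark that $D_s\psi_{A,\rho}$ never enters because the variational equation differentiates only in $z$ is also correct (and in fact sharper than the paper's parenthetical invoking $\psi\in\C^1$).

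The one point where your explanation diverges from the paper is the origin of the exponent $1-c$. The paper does \emph{not} obtain it by letting $\rho$ depend on $\eps$ or by allowing $|t_0|$ to grow; both are fixed throughout. Instead, the paper runs the Gronwall estimate on the longer window $t\in[0,(k/K_1)\ln(1/\eps)]$, where the exponential factor $e^{K_1 t}$ becomes $(1/\eps)^{k}$ and the polynomial-in-$t$ prefactor contributes logarithmic corrections absorbed by an extra $\eps^{k'}$; this produces $C\eps^{1-k-k'}=C\eps^{1-c}$, after which one restricts to $[0,1]\subset[0,(k/K_1)\ln(1/\eps)]$. Your more direct route---bounding everything on $[0,1]$ from the start---actually yields the stronger estimate $O(\eps)$, which of course implies the stated $O(\eps^{1-c})$; the paper's longer-window computation is really setting up the $[0,k\ln(1/\eps)]$ estimates needed later (cf.\ Gronwall's Inequality~III and the proof of Proposition~\ref{thm:transverse_homoclinic}), so the parameter $k$ in the hypothesis is there for that purpose rather than for the $[0,1]$ bound itself.
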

\begin{proof}


Consider the set $Q_{A_\delta,T_\delta}(\omega)$.

To prove \eqref{eqn:perturbed_flow} it is sufficient to show that the modified, perturbed flow and the unperturbed flow, when we shift the origin of time at $t_0$,  are $\C^1$-close, that is
\[  \|\pi_z[\hat\Phi^{t-t_0}_\eps(z_0,0,\theta^{t_0}\omega)]-\pi_z[\tilde\Phi^{t-t_0}_0(z_0,0)]\| _{C^1}\leq   C\eps^{1-c} \textrm { for } t\in [0,1].\]

To simplify notation, we substitute $t-t_0\mapsto t$ and write
\[ \hz_\eps(t)=\pi_z\left[ \hat\Phi^{t}_\eps(z_0,0,\theta^{t_0}\omega)\right] \textrm { and } z_0(t)= \pi_z\left[ \tilde\Phi^{t}_0(z_0,0) \right].\]

We write the solution $z_0(t)$ of the unperturbed system  and the solution  $\hz_\eps(t)$ of
of the perturbed, modified system \eqref{eqn:z_hat}  in integral form as in \eqref{eqn:z_hat_integral_prime} (with $t'$ replaced by $t$)
\begin{equation}\label{eqn:pert1}
  \begin{split}
  z_0(t)=&z_0+\int_{0}^t J\nabla H_0(z_0(s))ds \\
  \hz_\eps(t)=&z_0+\int_{0}^t [J\nabla H_0(\hz_\eps(s))+\eps J\nabla (\psi_{A_{\delta},\rho} (s,\theta^{t_0}\omega)H_1(\hz_\eps(s)))\omega(t_0+s)]ds .
  \end{split}
\end{equation}
By subtraction we obtain
\begin{equation}\label{eqn:pert2}
\begin{split}
   \|\hz_\eps(t)-z_0(t)\|\leq& \int_{0}^{t} |J\nabla H_0(\hz_\eps(s))-J\nabla H_0(z_0(s))|ds\\
&+\eps \int_{0}^{t} |J\nabla (\psi_{A_{\delta},\rho} (s,\theta^{t_0}\omega)H_1(\hz_\eps(s)))| |\theta^{t_0}\omega(s)|ds .
\end{split}
\end{equation}
Restricting $z$ to some suitable, compact domain, we let $K_1$ be the Lipschitz constant for $J\nabla H_0$,
and $K_2$ be the supremum of $\|J\nabla (\psi_{A_\delta,\rho} H_1)\|$  (recall that  $H_1$ is uniformly $\C^2$
and $0\le _{A_\delta,\rho}\le 1$). Since $\psi_{A_\delta,\rho} (s,\theta^{t_0})=0$ whenever $|\omega(t_0+s)|>A'_{\delta,\rho}+B|s|$, we then have
\[|J\nabla (\psi_{A_{\delta},\rho} (s,\theta^{t_0}\omega) H_1(\hz_\eps(s)))| |\omega(t_0+s)| \le K_2 (A'_{\delta,\rho}+B s ),\]
where we denote $A'_{\delta,\rho}:= C_H\cdot\rho^\alpha+A_\delta$.

From \eqref{eqn:pert2} we infer
\begin{equation}\label{eqn:pert3}
\begin{split}
   \|\hz_\eps(t)-z_0(t)\|\leq & K_1 \int_{0}^{t} \| \hz_\eps(s)- z_0(s) \|ds\\&+\eps K_2\int_{0}^{t} ( A'_{\delta,\rho}+B s)ds .
\end{split}
\end{equation}

Hence \eqref{eqn:pert3} yields
\begin{equation}\label{eqn:pert4}
\begin{split}
   \|\hz_\eps(t)-z_0(t)\|\leq& K_1 \int_{0}^{t} \| \hz_\eps(s)- z_0(s) \|ds +\eps K_2  \int_{0}^{t} (A'_{\delta,\rho} +B s) ds\\
   = &K_1 \int_{0}^{t} \| \hz_\eps(s)- z_0(s) \|ds+\eps K_2   \left(A'_{\delta,\rho}  t +\frac{B}{2}t ^2\right).
   \end{split}
\end{equation}

Applying Gronwall's Inequality -- I \ref{eqn:gronwall-I} for $\delta_0=0$, $\delta_1=\eps K_2 A'_{\delta,\rho}$, $\delta_2=\eps K_2 B/2$ and $\delta_3=K_1$, we obtain

\begin{equation}\label{eqn:pert5}
\begin{split}
   \|\hz_\eps(t)-z_0(t)\|\leq&  \left( \delta_0+\delta_1 t+ \delta_2 t^2 \right) e^{\delta_3 t}\\
   =&\eps \left(  K_2 A'_{\delta,\rho} t+ \frac{K_2 B}{2} t^2 \right) e^{K_1 t}\\
   =& \eps \left(  \bar A_{\delta,\rho}  t+ \bar B t^2 \right) e^{K_1 t},
   \end{split}
\end{equation}
where $\bar A_{\delta,\rho}=K_2 A'_{\delta,\rho}$ and $\bar B=\frac{K_2 B}{2}$.

Fix    $0<k<1$ and $0<k'<1-k$. For $0\leq t\leq \frac{k}{K_1}\ln\left( \frac{1}{\eps}\right)=\frac{1}{K_1}\ln\left( \left(\frac{1}{\eps}\right)^k\right)$ we have
\begin{equation}\label{eqn:pert6}
\begin{split}
   \|\hz_\eps(t)-z_0(t)\|\leq&   \eps\left(\bar A_{\delta,\rho}\frac{k}{K_1}\ln(\frac{1}{\eps})+ \bar B\frac{k^2}{K^2_1}\left(\ln(\frac{1}{\eps})\right)^2 \right) \left(\frac{1}{\eps}\right)^k\\
   =&\eps^{1-k-k'}\cdot \eps^{k'} \left(\bar A_{\delta,\rho}\frac{k}{K_1}\ln(\frac{1}{\eps})+ \bar B\frac{k^2}{K^2_1}\left(\ln(\frac{1}{\eps})\right)^2 \right).
   \end{split}
\end{equation}

\begin{equation}\label{eqn:eps_ln}
  \lim_{\eps\to 0}\eps^{k'}\ln(\frac{1}{\eps})=0 \textrm { and } \lim_{\eps\to 0}\eps^{k'}\left(\ln(\frac{1}{\eps})\right)^2=0.
\end{equation}

From \eqref{eqn:eps_ln} and the fact $\frac{k}{K_1}\ln\left( \frac{1}{\eps}\right) \to \infty $ as $\eps\to 0$, there exist $\eps_0  >0$ and $C^0_{\delta,\rho}>0$  and such that for all $0<\eps<\eps_0$ we have
\[ [0,1]\subset\left [0, ({k}/{K_1})\ln\left(  {1}/{\eps}\right)\right],\]
and
\[\eps^{k'} \left(\bar A_{\delta,\rho} \frac{k}{K_1}\ln(\frac{1}{\eps})+ \bar B\frac{k^2}{K^2_1}\left(\ln(\frac{1}{\eps})\right)^2 \right)<C^0_{\delta,\rho}.\]

Therefore, denoting $c_0=k+k'$, we obtain

\begin{equation}\label{eqn:C1}
  \|\hz_\eps(t)-z_0(t)\|\leq C^0_{\delta,\rho}\eps^{1-c_0}, \textrm { for } t\in[0,1].
\end{equation}

We have only showed that $\hz_\eps(t)$ and $z_0(t)$ are $C^0$-close. Now we make a similar argument to show that $\hz_\eps(t)$ and $z_0(t)$ are $\C^1$-close.

Denote
\begin{equation}\label{eqn:pert7}
    \begin{split}
   \frac{d}{dz}\pi_z\left[\tilde\Phi^t_0(z_0,0)\right]:=&\xi_0(t)\\
   \frac{d}{dz}\pi_z\left[\hat\Phi^{t}_\eps(z_0,0,\theta^{t_0}\omega)\right]:=&\hat\xi_\eps(t).
   \end{split}
\end{equation}

Then $\xi_0(z)$ and $\xi_\eps(z)$  satisfy the variational equations
\begin{equation}\label{eqn:pert8}
    \begin{split}
    \dot{\xi}_0(t)=&DJ\nabla H_0(z_0(t))\xi_0(t)\\
  \dot{\hat\xi}_\eps(t) =&DJ\nabla H_0(z_\eps(t))\xi_\eps(t)+DJ\nabla\left(\psi_{A}(t,;\theta^{t_0}\omega)H_1(\hz_\eps(t))\right)\xi_\eps(t)\omega(t),
   \end{split}
\end{equation}
where the derivation $D$ is with respect to $z$.

Then
\begin{equation}\label{eqn:pert9}
\begin{split}
   \|\hat\xi_\eps(t)-\xi_0(t)\|\leq& \int_{0}^{t} \|DJ\nabla H_0(z_\eps(s))\hat\xi_\eps(s)-DJ\nabla H_0(z_0(t))\hat\xi_0(s)\|ds\\
   &+\eps \int_{0}^{t} \|DJ\nabla\left(\psi_{A_{\delta},\rho}(s;\theta^{t_0}\omega)H_1(\hz_\eps(s))\right)\hat\xi_\eps(s)\| |\omega(t_0+s)|ds.
\end{split}
\end{equation}

Restricting  $z$ to some suitable, compact domain, let $K'_1$ be the Lipschitz constant for $DJ\nabla H_0$,  $K'_2$ such that $\|DJ\nabla \left(\psi_{A_{\delta},\rho}(s;\theta^{t_0}\omega)H_1(\hz_\eps(s))\right)\|<K'_2$ (recall that $H_1$ is uniformly $\C^2$ and $\psi_{A_{\delta},\rho}$ is uniformly $\C^1$), and $K'_3>0$ such that $\|\xi_0(t)\|<K'_3$.
Therefore:
\begin{equation}\label{eqn:pert10}
\begin{split}
   \|\hat\xi_\eps(t)-\xi_0(t)\|\leq& K'_1\int_{0}^{t} \|\hat\xi_\eps(s)-\xi_0(s)\|ds\\
   &+\eps K'_2\int_{0}^{t} \|\hat\xi_\eps(s)\| |\omega(t_0+s)|ds\\
   \leq& K'_1\int_{0}^{t} \|\hat\xi_\eps(s)-\xi_0(s)\|ds\\
   &+\eps K'_2\int_{0}^{t} \|\hat\xi_\eps(s)-\xi_0(s)\| |\omega(t_0+s)|ds\\&+\eps K'_2\int_{t_0}^{t} \|\xi_0(s)\| |\omega(t_0+s)|ds\\
   \leq& K'_1\int_{0}^{t} \|\hat\xi_\eps(s)-\xi_0(s)\|ds\\
   &+\eps K'_2\int_{0}^{t} \|\hat\xi_\eps(s)-\xi_0(s)\| (A'_{\delta,\rho}+Bs)ds\\&+\eps K'_2 \int_{t_0}^{t} \|\hat\xi_0(s)\| (A'_{\delta,\rho}+Bs) ds\\
   \leq& \int_{0}^{t} (K'_1+\eps K'_2A'_{\delta,\rho} +\eps K'_2 B s)\|\hat\xi_\eps(s)-\xi_0(s)\|ds \\&+\eps K'_2K'_3  \left(A'_{\delta,\rho} t+\frac{B}{2}t^2\right)
\end{split}
\end{equation}

Applying Gronwall's Inequality -- II \ref{eqn:gronwall-II} for $\delta_0=0$, $\delta_1=\eps K'_2K'_3 A'_{\delta,\rho}$, $\delta_2=\eps\frac{K'_2K'_3 B}{2}$,
$\delta_3=K'_1+\eps K'_2A'_{\delta,\rho}$ and $\delta_4=\eps K'_2B$, we obtain
\begin{equation}\label{eqn:pert11}
  \begin{split}
     \|\hat\xi_\eps(t)-\xi_0(t)\|\leq & \eps\left ( K'_2K'_3 A'_{\delta,\rho} t+\frac{K'_2K'_3 B}{2}t^2\right )e^{\left[( K'_1+\eps K'_2A'_{\delta,\rho})t+\eps\frac{K'_2K'_3 B}{2}t^2\right]}  \\
       = &\eps\left(At+Bt^2\right)\eps^{\left[Ct+\eps(Dt+Et^2)\right]}.
  \end{split}
\end{equation}
where $\bar A_{\delta,\rho} =K'_2K'_3 A'_{\delta,\rho}$, $\bar B=\frac{K'_2K'_3 B}{2}$, $\bar C=K'_1$,   $\bar D_{\delta,\rho}=K'_2 A'_{\delta,\rho}$, and $\bar E=\frac{K'_2K'_3 B}{2}$.

Fix    $0<k<1$ and $0<k'<1-k$.
For $0\leq t \leq \frac{k}{\bar C}\ln(\frac{1}{\eps})$, we have
\begin{equation}\label{eqn:per12}
 \eps(\bar D_{\delta,\rho} t+\bar Et^2)\leq \eps \left(\bar D_{\delta,\rho}\frac{k}{\bar C}\ln(\frac{1}{\eps})+\bar E\frac{k^2}{\bar C^2 }\left(\ln(\frac{1}{\eps})\right)^2\right)\to 0 \textrm{ as }\eps\to 0
\end{equation}
due to \eqref{eqn:eps_ln}.
Therefore, there exists $\bar F_{\delta,\rho} >0$ such that, if $0<\eps<\eps_0$, for sufficiently small $\eps_0$, we obtain
\begin{equation}\label{eqn:per13}
 e^{\eps(\bar D_{\delta,\rho} t+\bar Et^2)}\leq \bar F_{\delta,\rho} .
 \end{equation}

Using again \eqref{eqn:eps_ln}, we obtain that there exist $\eps_0>0$, $C^1_{\delta,\rho}>0$ such that for $0<\eps<\eps_0$
\begin{equation}\label{eqn:per14}
 \begin{split}
 \|\hat\xi_\eps(t)-\xi_0(t)\|\leq &\eps^{1-k}\left(\bar A_{\delta,\rho}\frac{k}{K_1}\ln(\frac{1}{\eps})+ \bar B\frac{k^2}{K^2_1}\left(\ln(\frac{1}{\eps})\right)^2 \right)\bar F_\delta\\
 =&\eps^{1-k-k'}\cdot\eps^{k'}\left(\bar A_{\delta,\rho}\frac{k}{K_1}\ln(\frac{1}{\eps})+ \bar B\frac{k^2}{K^2_1}\left(\ln(\frac{1}{\eps})\right)^2 \right)\bar F_{\delta,\rho} \\
 \leq& C^1_{\delta,\rho}\eps^{1-c_1},
 \end{split}
 \end{equation}
 where $c_1=k+k'$.

 We obtain
\begin{equation}\label{eqn:C2}
  \|\hat\xi_\eps(t)-\xi_0(t)\|\leq C^1_{\delta,\rho}\eps^{1-c_1}, \textrm { for } t\in[0,1].
\end{equation}

Combining \eqref{eqn:C1} and \eqref{eqn:C2} we obtain

\begin{equation}\label{eqn:C3}
  \|\hz_\eps(t)-z_0(t)\|_{\C^1}\leq C_{\delta,\rho} \eps^{1-c}, \textrm { for } t\in[0,1],
\end{equation}
where $C_{\delta,\rho} =\max\{C^0_{\delta,\rho} ,C^1_{\delta,\rho} \}$ and $c=\max \{c_0,c_1\}$.
Since $k,k'$ are arbitrary, we can choose in fact any $c\in(0,1)$.
\end{proof}

%
%

We can use Proposition \ref{prop:perturbed_flow} to prove the first main result of the paper.

\begin{proof}[\bf Proof of Theorem \ref{thm:random_NHIM}]

Let
\[\tilde\Lambda_0=\{(I,\phi,p,q,t_0)\,|\,I\in[a,b],\,\phi\in\mathbb{T}^1,\, p=q=0, t_0\in\mathbb{R} \} \]
be the NHIM of the extended, unperturbed system.

Consider the perturbed, modified  system given by \eqref{eqn:z_hat}.

Proposition \ref{prop:perturbed_flow} implies that the flow of the perturbed,  modified system and the flow  of the unperturbed systems are $C\eps^{1-c}$ close
to one another in $\C^1$, for all $\eps$ smaller than some $\eps_0$. We choose $\eps_0$ small enough so that
\[C\eps_0^{1-c}<\bar{\eps},\]
where $\bar{\eps}$ is the smallness parameter that appears in \eqref{eqn: flow_error}, in the statement of Theorem \ref{thm:Bates1}.

Applying  Theorem \ref{thm:Bates1}, we obtain the existence of the normally hyperbolic manifold $\hat\Lambda_\eps(\omega)$ and its stable and unstable manifolds, satisfying the desired properties.
We note that, although $\tilde\Lambda_0$ is not compact, the result on the persistence of the NHIM still applies since the perturbation has uniformly bounded derivatives (see \cite{HirschPS77}).

By Lemma \ref{lem:Yagasaki}, for  a.e. $\omega\in \Omega$, the set $Q_{A_\delta,T_\delta}(\omega)$, consisting of the times ${t_0}$ for which $\theta^{t_0}(\omega)\in\Omega_{A_\delta}$ has measure at least $2(1-\delta)T_\delta$.

For $t_0\in \Omega_{A_\delta,T}$, the solution of  \eqref{eqn:z_hat} with $(\hat z_\eps(t_0),\t(t_0))=(z_0,t_0)$ coincides with the solution of
\eqref{eqn:hamiltonian_extended} with the same initial condition.
Thus, for $t_0\in Q_{A_\delta,T_\delta}(\omega)$, we obtain that the normally hyperbolic manifold $\hat\Lambda_\eps(\theta^{t_0}(\omega))$ for the modified system represents a normally hyperbolic manifold $ \Lambda_\eps(\theta^{t_0}(\omega))$ for the original system.
The same statement holds for  its stable and unstable manifolds, which satisfy  the desired properties.

We have noted that the set $\Omega_{A_\delta,T_\delta}\subseteq \Omega$ is not closed under $\theta^t$, therefore the equivariance property of the NHIM and on its stable and unstable manifolds is restricted to those $t_0,t_1\in\mathbb{R}$ such that $t_0,t_0+t_1\in Q_{A_\delta,T_\delta}(\omega)$.
Even though the normally hyperbolic manifold exist for those paths
$\theta^{t_0}\omega$ which are in $Q_{A_\delta,T_\delta}(\omega)$,
the initial path $\omega$ is arbitrary. Starting with an arbitrary path $\omega$, there exits a large measure set of $t_0$ for which $\theta^{t_0}\omega\in Q_{A_\delta,T_\delta}(\omega)$.
\end{proof}

%

\section{Existence of transverse homoclinic intersections}
\label{sec:transverse}

\subsection{Distance between stable and unstable manifolds}
The unperturbed stable and
unstable manifolds, $W^\st(\tLambda_0)$ and $W^\un(\tLambda_0)$ in the extended space
coincide along the homoclinic manifold, which is given in the coordinates $(I,\phi,P,\tau,\t)$ defined  in Section \ref{sec:coordinate_system}, by
\[\{(I, \phi,P,\tau,\t)\,|\, P=0\}.\]


Define the section:
\[ \Sigma_{t_0}= \{ (I, \phi,P,\tau,\t)\,|\, \t=t_0 \}.\]

For $t_0\in Q_{A_\delta,T_\delta}(\omega)$ and $\eps_0$ sufficiently enough,   the perturbed invariant manifold  for the original system $W^\st
(\Lambda_\eps(\theta^{t_0}\omega))$, $W^\un(\Lambda_\eps(\theta^{t_0}\omega))$ exist in $\Sigma_{t_0}$,
 and they are  $C^1$-close to $W^\st(\Lambda_0)$, $W^\un(\Lambda_0)$,  respectively. Moreover, the invariant manifold $W^\st(\Lambda_\eps(\theta^{t_0}\omega))$ can be represented as a graph $P^\st=P^\st(I,\phi,\tau,t_0)$ over the variables $(I,\phi,\tau)$, and, similarly, the invariant manifold $W^\un(\tLambda_\eps(\theta^{t_0}\omega))$ can be written as a graph $P^\un=P^\un(I,\phi,\tau,t_0)$, where $\t=t_0$ is fixed.
 See Fig.~\ref{fig:Wu_Ws_random}.

The next result says that we can express the distance between the perturbed stable and unstable manifolds as a Melnikov-type integral.
We will use this to find crossings of the stable and unstable manifolds as   zeros of the Melnikov integral.

\begin{figure}
\includegraphics[width=0.6\textwidth]{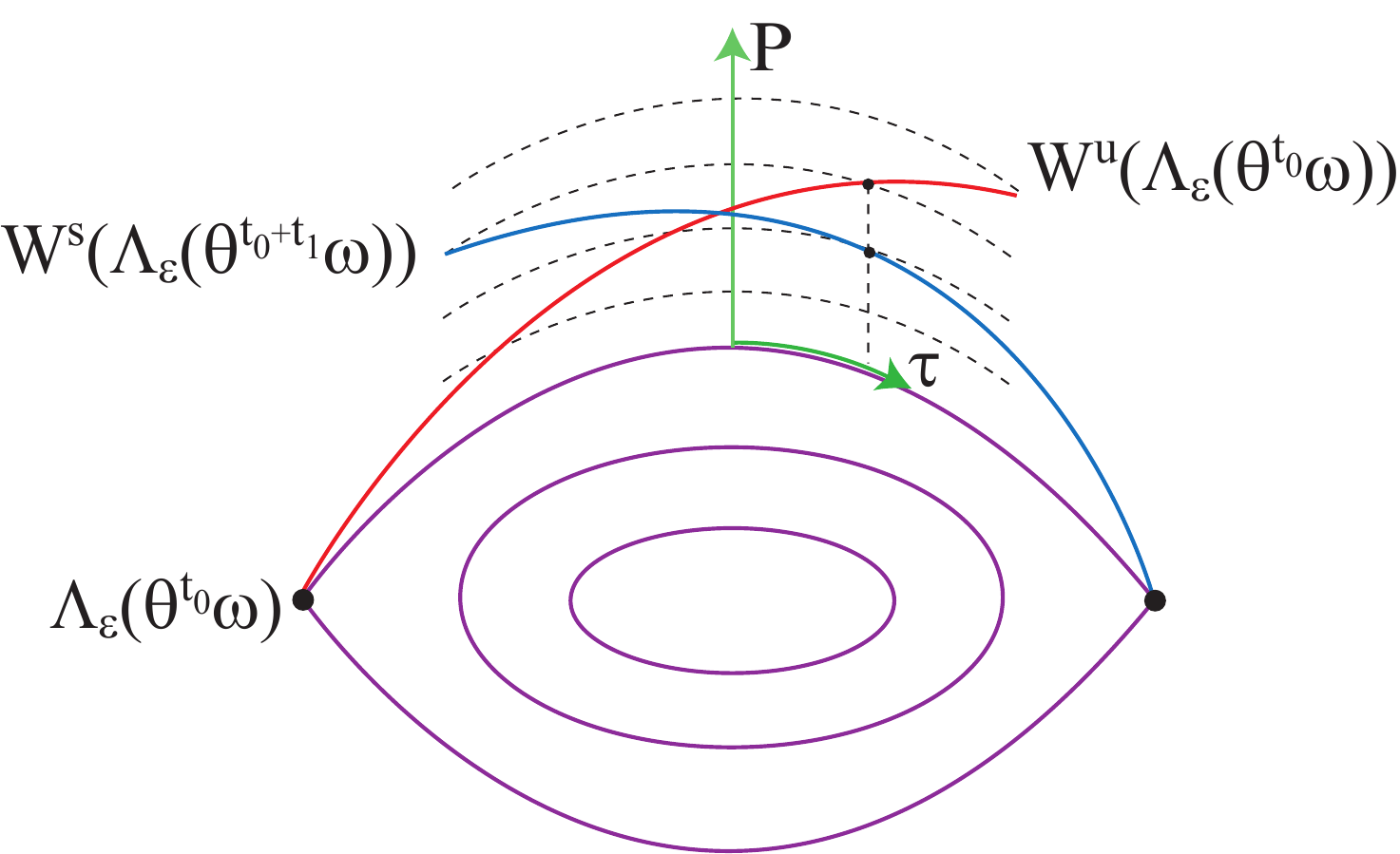}
\caption{Stable and unstable manifolds as graphs.}
\label{fig:Wu_Ws_random}
\end{figure}

\begin{prop}\label{thm:transverse_homoclinic}
Let $t_0\in Q_{A_\delta,T_\delta}(\omega)$ and $\eps_0$ sufficiently small. Consider a point $\tz^\st_\eps \in W^\st(\Lambda_\eps(\theta^{t_0}\omega))$ given by $P^\st(\tz^\st_\eps)=P^\st(I,\phi,\tau,t_0)$, and a point
$\tz^\un_\eps \in W^\un(\Lambda_\eps(\theta^{t_0}\omega))$ given by $P^\un(\tz^\un_\eps)=P^\un(I,\phi,\tau,t_0)$, for the same coordinates $(I,\phi,\tau,t_0)$.

Then
\begin{equation}\label{eqn:Pun_minus_Pst}\begin{split}
P&(\tz^\st_\eps) -P(\tz^\un_\eps) \\
& =-\eps\int_{-\infty}^{\infty}  \{P,H_1\}(I,\phi+\nu(I) s,p_0(\tau+s),q_0(\tau+s),t_0+s) \omega(t_0+s)  \, ds\\
&\quad+O(\eps^{1+\rho})
\end{split}
\end{equation}
for some $\rho\in(0,1)$.
\end{prop}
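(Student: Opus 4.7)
\medskip
\noindent\textbf{Proof plan for Proposition 7.1.}

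My plan is to express $P(\tz^\st_\eps)-P(\tz^\un_\eps)$ as a difference of two fundamental-theorem-of-calculus integrals along stable and unstable trajectories, respectively, and then replace the perturbed trajectories by the unperturbed homoclinic orbit at the cost of a controlled error. First, parametrize the points of $W^\st(\Lambda_\eps(\theta^{t_0}\omega))$ and $W^\un(\Lambda_\eps(\theta^{t_0}\omega))$ that share the coordinates $(I,\phi,\tau,t_0)$ on the base; by \eqref{eqn:stable_unstable_equivalent}, there exist unique points $\tz^\st_{\eps,+}$ and $\tz^\un_{\eps,-}$ on $\Lambda_\eps(\theta^{t_0}\omega)$ such that the forward trajectory of $\tz^\st_\eps$ is asymptotic to that of $\tz^\st_{\eps,+}$, and the backward trajectory of $\tz^\un_\eps$ is asymptotic to that of $\tz^\un_{\eps,-}$.

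Next, compute $\tfrac{d}{ds}P(\tilde\Phi^s_\eps(\tz))=\{P,H_0\}(\tilde\Phi^s_\eps(\tz))+\eps\{P,H_1\}(\tilde\Phi^s_\eps(\tz))\,\omega(t_0+s)$. Since $H_0=h_0(I)+P$ and $P=\tfrac12 p^2+V(q)$ depends only on $(p,q)$, we have $\{P,H_0\}=\{P,h_0(I)\}+\{P,P\}=0$; hence the drift is entirely $O(\eps)$. Integrating from $0$ to $+\infty$ on the stable side and from $-\infty$ to $0$ on the unstable side yields
\begin{equation*}
P(\tz^\st_\eps)-P(\tz^\un_\eps)
=\bigl[P(\tz^\st_{\eps,+})-P(\tz^\un_{\eps,-})\bigr]
-\eps\!\int_{-\infty}^{\infty}\!\{P,H_1\}(\tilde\Phi^s_\eps(\tz^{\st/\un}_\eps))\,\omega(t_0+s)\,ds,
\end{equation*}
where the symbol $\tz^{\st/\un}_\eps$ means $\tz^\st_\eps$ for $s\ge 0$ and $\tz^\un_\eps$ for $s\le 0$. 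Because $P$ vanishes at the hyperbolic fixed point $(p,q)=(0,0)$ to second order (as $V'(0)=0$), and both asymptotic points lie on the perturbed NHIM whose coordinates are $O(\eps)$-close to $p=q=0$, the boundary contribution $P(\tz^\st_{\eps,+})-P(\tz^\un_{\eps,-})$ is $O(\eps^2)$, which is absorbed into the remainder $O(\eps^{1+\rho})$ for any $\rho<1$.

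It remains to replace $\tilde\Phi^s_\eps(\tz^{\st/\un}_\eps)$ by the unperturbed homoclinic orbit $\tilde\Phi^s_0(\tz_0)=(I,\phi+\nu(I)s,p_0(\tau+s),q_0(\tau+s),t_0+s)$ inside the integrand. The key point is that $\{P,H_1\}(I,\phi+\nu(I)s,p_0(\tau+s),q_0(\tau+s))$ decays exponentially in $|s|$ at rate $\beta$ because, using $\{P,H_1\}=p\,\partial_q H_1-V'(q)\,\partial_p H_1$ and $V'(0)=0$, the integrand vanishes at $(p,q)=(0,0)$. Hence the tails $|s|\ge T$ contribute at most $C\int_{|s|\ge T}e^{-\beta|s|}(A_\omega+B|s|)\,ds=O(e^{-\beta T}(1+T))$, where I use Lemma 5.1 for the sublinear bound on $\omega$. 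For $|s|\le T$, Proposition 6.1 together with standard Gronwall iteration (on the stable manifold going forward, on the unstable manifold going backward, making use of the exponential contraction in the stable/unstable foliations from Theorem 4.3) gives $\|\tilde\Phi^s_\eps(\tz^{\st/\un}_\eps)-\tilde\Phi^s_0(\tz_0)\|\le C\eps^{1-c}e^{K|s|}$ for some $K<\beta$. Choosing $T=\rho'|\ln\eps|$ with $\rho'$ small balances the two errors and yields a total error of order $\eps^{1+\rho}$ for some $\rho\in(0,1)$.

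The main obstacle is this last step: the Gronwall estimate from Proposition 6.1 is only stated on $[0,1]$ with loss $\eps^{1-c}$, so extending it to a logarithmically long window $[0,T]$ while keeping the error strictly smaller than $\eps$ is the delicate point. The way around it is the standard Melnikov trick of using the exponential decay of $\{P,H_1\}$ along the unperturbed homoclinic orbit to cut off the integral at $T=O(|\ln\eps|)$, where the perturbed trajectory is still $\eps^{1-c'}$-close to the unperturbed one for some $c'<1$, and to treat the tails by the exponential decay against the linearly growing noise bound.
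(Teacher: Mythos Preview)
Your overall strategy coincides with the paper's: apply the fundamental theorem of calculus along stable/unstable orbits, split the time axis at $T=k\ln(1/\eps)$, use a Gronwall estimate on $[0,T]$ to pass from the perturbed to the unperturbed flow, and kill the tails by exponential decay against the linear bound on $\omega$ from Lemma~\ref{lem:eta}. The paper's proof differs from yours in one structural point that is not cosmetic.

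When you write the identity
\[
P(\tz^\st_\eps)-P(\tz^\un_\eps)=\bigl[P(\tz^\st_{\eps,+})-P(\tz^\un_{\eps,-})\bigr]-\eps\int_{-\infty}^{\infty}\{P,H_1\}(\tilde\Phi^s_\eps(\tz^{\st/\un}_\eps))\,\omega(t_0+s)\,ds,
\]
you implicitly assume that $\lim_{T\to\infty}P(\tilde\Phi^T_\eps(\tz^\st_\eps))$ exists and equals $P(\tz^\st_{\eps,+})$, and that the improper integral converges. Your tail estimate, however, is written only for the \emph{unperturbed} integrand. Since the proposition is used (via Lemma~\ref{lem:zero_trans}) in Theorem~\ref{prop:transverse}, which explicitly does \emph{not} assume \eqref{eqn:H_1}, the perturbed NHIM is merely $O(\eps)$-close to $\{p=q=0\}$. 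Consequently $\{P,H_1\}=p\,\partial_qH_1-V'(q)\,\partial_pH_1$ along the perturbed stable orbit tends to an $O(\eps)$ value, not to $0$, and $\eps\int_T^{\infty}O(\eps)(A_\delta+Bs)\,ds$ diverges. For the same reason $P(\tilde\Phi^T_\eps(\tz^\st_\eps))$ need not converge as $T\to\infty$.

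The paper avoids this by applying the fundamental theorem of calculus to the \emph{difference}
\[
P\bigl(\tilde\Phi^s_\eps(\tz^\st_\eps)\bigr)-P\bigl(\tilde\Phi^s_\eps(\Omega^\st_\eps(\tz^\st_\eps))\bigr),
\]
where $\Omega^\st_\eps(\tz^\st_\eps)\in\Lambda_\eps(\theta^{t_0}\omega)$ is the stable foot-point. The resulting integrand
\[
\bigl[\{P,H_1\}(\tilde\Phi^s_\eps(\tz^\st_\eps))-\{P,H_1\}(\tilde\Phi^s_\eps(\Omega^\st_\eps(\tz^\st_\eps)))\bigr]\omega(t_0+s)
\]
decays exponentially as $s\to+\infty$ simply because both orbits lie on the same stable fiber, with no hypothesis on where the NHIM sits; this is what makes both the boundary term vanish and the tail $II$ be $O(\eps^{1+\varrho_2})$. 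After the Gronwall step (Lemma~\ref{lem:gronwall-III}) on $[0,k\ln(1/\eps)]$, the subtracted term evaluated on the \emph{unperturbed} foot-point orbit is identically zero (since $\{P,H_1\}$ vanishes at $p=q=0$), so one recovers exactly your Melnikov integral. Under \eqref{eqn:H_1} your argument and the paper's coincide; in the stated generality, insert this foot-point subtraction before taking $T\to\infty$.
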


\begin{proof}
Suppose that $\tz_0=(I,\phi,p_0(\tau),q_0(\tau),t_0)$ is a homoclinic point for $\tPhi^t_0$.
Then the stable and unstable foot-points are both given by $\tz_0^\pm =(I,\phi,0,0,t_0)$, as
the stable foot-point and the unstable foot-point coincide in the unperturbed case.
Hence, in \eqref{eqn:Pun_minus_Pst},  $\left(I,\phi+\nu(I) s,p_0(\tau+s),q_0(\tau+t),t_0+s \right)$
 represents the effect of the unperturbed flow  ${\tPhi}^s_0$ on the homoclinic point $\tz_0$, and
$\left(I,\phi+\nu(I) s,0,0,t_0+s \right)$ represents the effect of the unperturbed flow ${\tPhi}^s_0$ on the foot-point $\tz_0^\pm$.

Since $d(\tPhi^s(\tz_0), \tPhi^s(\tz^\pm_0)\to 0$ exponentially fast as $s\to\pm\infty$,
\begin{equation*}\begin{split}\{P,H_1\}&\left(I,\phi+\nu(I) s,p(\tau+t),q(\tau+s),t_0+s\right) \\
&\, -\{P,H_1\}\left(I,\phi+\nu(I) s,0,0,t_0+s \right)  \to 0\end{split}\end{equation*}
exponentially fast   as $t\to\infty$.
Note that $\{P,H_1\}=V'(q)\frac{\partial H_1}{\partial p}-p\frac{\partial H_1}{\partial q}$ vanishes at $p=q=0$.
Since $t_0\in Q_{A_\delta,T_\delta}$,  $|\omega(t_0+s)|<A_\delta+B|s|$ for all $s$, so we obtain
\[\{P,H_1\}\left(I,\phi+\nu(I)t,p(\tau+s),q(\tau+s)\right)\omega(t_0+s)\to 0\] exponentially fast   as $s\to\infty$.
Thus, the improper integral in \eqref{eqn:Pun_minus_Pst} is absolutely convergent.

For $\tz^\st_\eps\in W^\st_\eps(\Lambda_\eps(\theta^{t_0}\omega))$  let $\Omega^\st_\eps({\tz}^\st_\eps)$ be the foot-point of the unique stable fiber of   $W^\st(\Lambda_\eps(\theta^{t_0}\omega))$ through ${\tz}^\st_\eps$ .
Using the fundamental theorem of calculus and
\begin{equation}\label{FTC_0}
\begin{split}
P({\tz}^\st_\eps) - &P({\Omega}^\st_\eps({\tz}^\st_\eps))
= P( {\tPhi}^T_\eps({\tz}^\st_\eps)) - P({\tPhi}^T_\eps {\Omega}_\eps^\st({\tz}^\st_\eps) ) \\
& -\int_0^T  \frac{d}{d s}\big[ P( {\tPhi}^s_\eps({\tz}^\st_\eps)) -
P( {\tPhi}^s_\eps {\Omega}_\eps^\st({\tz}^\st_\eps) ) \big] \, d s\\
= & P( {\tPhi}^T_\eps({\tz}^\st_\eps)) - P({\tPhi}^T_\eps {\Omega}_\eps^\st({\tz}^\st_\eps) ) \\
& -\eps\int_0^T \big[ (J\nabla H_1)P( {\tPhi}^s_\eps({\tz}^\st_\eps))\ -
(J\nabla H_1)P( {\tPhi}^s_\eps {\omega}_\eps^\st({\tz}^\st_\eps) )\big]  \omega(t_0+s) \, d s \\
\end{split}
\end{equation}

The vector field $(J\nabla H_1)$ is thought of as derivation, and so  $(J\nabla H_1)P$ is the corresponding directional derivative of $P$.
Hence \begin{equation*}\begin{split}
(J\nabla H_1)P({\tPhi}^s_\eps({\tz}))=&-\eps \left[\frac{\partial P}{\partial p}\frac{\partial H_1}{\partial q}+
\frac{\partial P}{\partial q}\frac{\partial H_1  }{\partial p}\right]({\tPhi}^s_\eps({\tz}))\omega(t_0+s)\\
=&\eps \{P,H_1\}({\tPhi}^s_\eps({\tz}))\omega(t_0+s),
\end{split}
\end{equation*}
where $\{\cdot,\cdot\}$ denotes the Poisson bracket.

Letting $T\to+\infty$, since $\left[P( {\tPhi}^T_\eps({\tz}^\st_\eps)) - P({\tPhi}^T_\eps {\Omega}_\eps^\st({\tz}^\st_\eps) )\right]\omega(t_0+s)\to 0$, we obtain
\begin{equation}\label{FTC_1}
\begin{split}
P({\tz}^\st_\eps) - P({\Omega}^\st_\eps({\tz}^\st_\eps))
=
& -\eps\int_0^{+\infty} \big[ \{P,H_1\}( {\tPhi}^s_\eps({\tz}^\st_\eps))\ -
\{P,H_1\}( {\tPhi}^s_\eps {\Omega}_\eps^\st({\tz}^\st_\eps) )\big] \omega(t_0+s) \, d s.
\end{split}
\end{equation}

We split the  integral on the right-hand side of the above into two:
\begin{equation}\label{FTC_2}
\begin{split}
I=&-\eps\int_0^{k\log(1/\eps)} \big( \{P,H_1\}( {\tPhi}^s_\eps({\tz}^\st_\eps)) -
\{P,H_1\}( {\tPhi}^s_\eps {\Omega}_\eps^\st({\tz}^\st_\eps) ) \big) \omega(t_0+s) \, d s,
\\ II=&-\eps
\int_{k\log(1/\eps)}^{\infty} \big(\{P,H_1\}( {\tPhi}^s_\eps({\tz}^\st_\eps))  -
\{P,H_1\}( {\tPhi}^s_\eps {\Omega}_\eps^\st({\tz}^\st_\eps) ) \big)\omega(t_0+s) \, d s,
\end{split}
\end{equation}
for some $k>0$.

For the second integral, since \[\big[\{P,H_1\}( {\tPhi}^s_\eps({\tz}^\st_\eps)) -
\{P,H_1\}( {\tPhi}^s_\eps {\Omega}_\eps^\st({\tz}^\st_\eps) )\big]\omega(t_0+s)\to 0\] exponentially fast, for any $k>0$ we have that  $II=O_{C^1}(\eps^{\varrho_2})$ for some $\varrho_2>0$.

For the first integral, by applying the Gronwall Inequality -- III \ref{lem:gronwall-III} and choosing $k>0$ sufficiently small, we can replace the terms depending on the perturbed flow by corresponding terms depending on the unperturbed flow, while making an error of order $O_{C^1}(\eps^{\varrho_1})$, for some $\varrho_1\in(0,1)$, obtaining
\begin{equation*}
\begin{split}
I=&-\eps\int_0^{k\log(1/\eps)} \big[\{P,H_1\}( {\tPhi}^s_0({\tz}^\st_0)) -
\{P,H_1\}( {\tPhi}^s_0{\Omega}_0^\st({\tz}^\st_0 ))\big] \omega(t_0+s) \, d s\\&+O_{C^1}(\eps^{1+\varrho_1}).
\end{split}
\end{equation*}

Since also \[\big[\{P,H_1\}( {\tPhi}^s_0({\tz}^\st_0)) -
\{P,H_1\}]( {\tPhi}^s_0 {\Omega}_0^\st({\tz}^\st_0) )\big]\omega(t_0+s)\to 0\] exponentially fast, we
can replace the above integral from $0$ to ${k\log(1/\eps)}$ by the improper integral from  $0$ to $+\infty$, by making an error of order $O(\eps^{1+\rho_2})$.

Thus,
\begin{equation*}
\begin{split}
I=&-\eps\int_0^{+\infty} \big[ \{P,H_1\}( {\tPhi}^s_0({\tz}^\st_0)) -
\{P,H_1\}( {\tPhi}^s_0{\Omega}_0^\st({\tz}^\st_0 ) \big] \omega(t_0+s) \, d s\\&+O_{C^1}(\eps^{1+\varrho}),
\end{split}
\end{equation*}
where $\varrho=\min\{\varrho_1,\varrho_2\}$.

Combining $I$ and $II$ we obtain
\begin{equation}\label{FTC_3}
\begin{split}
P(\tz^\st_\eps) - P({\Omega}^\st_\eps(\tz^\st_\eps))
= & -\eps\int_0^{+\infty} \big[ \{P,H_1\}( {\tPhi}^s_0(\tz^\st_0)) -
\{P,H_1\} ]( \tPhi^s_0 {\Omega}_0^\st(\tz^\st_0) ) \big] \omega(t_0+s) \, d s\\
&+O_{C^1}(\eps^{1+\rho}).
\end{split}
\end{equation}

Using that $\tz_0=(I,\phi,p_0(\tau),q_0(\tau),t_0)$ and $\tz_0^\pm =(I,\phi,0,0,t_0)$, we have
\begin{equation}\label{FTC_4}
\begin{split}
P({\tz}^\st_\eps) - P({\Omega}^\st_\eps({\tz}^\st_\eps))
= & -\eps\int_0^{+\infty}\{P,H_1\}  (I,\phi+\nu(I) s, p(\tau+s), q(\tau+s)) \omega(t_0+s)\, d s\\&+O_{C^1}(\eps^{1+\rho}).
\end{split}
\end{equation}

A similar computation for a point ${\tz}^\un_\eps$ in $W^\un(\Lambda_\eps(\theta^{t_0}\omega))$,
yields
\begin{equation}\label{FTC_5}
\begin{split}
P({\tz}^\st_\eps) - P({\Omega}^\un_\eps({\tz}^\un_\eps))
= & -\eps\int_{-\infty}^{+\infty}  \{P,H_1\}  (I,\phi+\nu(I) s, p(\tau+s), q(\tau+s))   \omega(t_0+s) \, d s \\
&+O_{C^1}(\eps^{1+\rho}),
\end{split}
\end{equation}
where $\Omega^\un_\eps({\tz}^\un_\eps)$ be the foot-point of the unique unstable fiber of   $W^\un_\eps(\Lambda_\eps(\theta^{t_0}\omega))$ through ${\tz}^\un_\eps$.

Subtracting \eqref{FTC_4} from \eqref{FTC_5} yields
\begin{equation}\label{FTC_6}
\begin{split}
P({\tz}^\st_\eps) - P({\tz}^\un_\eps)=& P({\Omega}^\st_\eps({\tz}^\un_\eps))-P({\Omega}^\un_\eps({\tz}^\st_\eps))\\
&-\eps\int_{-\infty}^{+\infty} \{P,H_1\}  (I,\phi+\nu(I) s, p(\tau+s), q(\tau+s)) \omega(t_0+s) \, d s\\
&+O(\eps^{1+\rho}).
\end{split}
\end{equation}

The points  ${\Omega}^\un_\eps({\tz}^\un_\eps)$ and ${\Omega}^\st_\eps({\tz}^\st_\eps)$ are   in
$\Lambda_\eps(\theta^{t_0}\omega)$.
Since $P$ has a critical point at $(0,0)$ and $P({\Omega}^{\st}_0({\tz}_0))=P({\Omega}^{\un}_0({\tz}_0))$
it follows that
\[\|P({\Omega}^{\st,\un}_\eps( {\tz}^{\st,\un}_\eps))
- P({\Omega}^{\st,\un}_0({\tz}_0))\|_{C^{1}} \le C\eps^2,\]
and therefore
\[\|P({\Omega}^\st_\eps({\tz}^\st_\eps)) -P({\Omega}^\un_\eps({\tz}^\un_\eps))\|_{C^{1}}=O(\eps^2).\]

Therefore, \eqref{FTC_6}  immediately implies \eqref{eqn:Pun_minus_Pst}, which concludes the proof.
\end{proof}

\subsection{Time invariance}

Consider the \emph{Melnikov function} that appears on the right-hand side of Proposition \ref{thm:transverse_homoclinic}.
\begin{equation}\label{eqn:tau_integral}
\begin{split}
(I,\phi,\tau,t_0)\mapsto & M^P( I,\phi, \tau, t_0 )\\=& \int_{-\infty}^{\infty}  \{P,H_1\}(I,\phi+\nu(I) s,p_0(\tau+ s ),q_0(\tau+ s)) \omega(t_0+s)  \, d s
\end{split}
\end{equation}

The following is immediate:

\begin{lem}\label{lem:zero_trans} Let $t_0\in Q_{A_\delta,T_\delta}$.
If the mapping \[\tau\mapsto M^P(I,\phi,\tau,t_0)\] has a non-degenerate zero $\tau^*=\tau^*(I,\phi,t_0)$, then  there exists $0<\eps_1<\eps_0$ such that
\[\tau \mapsto P(\tz^\st_\eps) -P(\tz^\un_\eps)\]
has  a non-degenerate zero $\tau^*_\eps=\tau^*(I,\phi,t_0)+O_{C^1}(\eps^{1+\varrho})$ for all $0<\eps<\eps_1$, which in turn implies the existence of a transverse homoclinic intersection of $W^\st(\Lambda_\eps(\theta^{t_0}\omega))$ and $W^\un(\Lambda_\eps(\theta^{t_0}\omega))$ for all $0<\eps<\eps_1$.
\end{lem}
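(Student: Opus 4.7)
The plan is to apply a one-parameter implicit function theorem to a suitably rescaled splitting function, and then translate non-degeneracy of the zero into geometric transversality of the stable and unstable manifolds. By Proposition \ref{thm:transverse_homoclinic}, for fixed $(I,\phi,t_0)$ with $t_0 \in Q_{A_\delta,T_\delta}(\omega)$, the splitting
\begin{equation*}
D_\eps(\tau) := P(\tz^\st_\eps) - P(\tz^\un_\eps) = -\eps\, M^P(I,\phi,\tau,t_0) + O_{\C^1}(\eps^{1+\varrho})
\end{equation*}
is $\C^1$ in $\tau$ with remainder controlled uniformly on compact $\tau$-intervals. The overall factor of $\eps$ multiplying the Melnikov term is the structural feature one must accommodate: the IFT cannot be applied to $D_\eps$ directly, since $\partial_\tau D_\eps$ is itself $O(\eps)$ and shrinks as $\eps \to 0$.

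Concretely, I would set $\tilde D_\eps(\tau) := -\eps^{-1} D_\eps(\tau)$ for $\eps > 0$ and extend by $\tilde D_0(\tau) := M^P(I,\phi,\tau,t_0)$. Dividing the $\C^1$ error bound by $\eps$ yields
\begin{equation*}
\tilde D_\eps(\tau) = M^P(I,\phi,\tau,t_0) + O_{\C^1}(\eps^{\varrho}),
\end{equation*}
so both $\tilde D_\eps$ and $\partial_\tau \tilde D_\eps$ are jointly continuous in $(\tau,\eps)$ at $\eps = 0$. The non-degeneracy hypothesis translates to $\tilde D_0(\tau^*) = M^P(\tau^*) = 0$ and $\partial_\tau \tilde D_0(\tau^*) = \partial_\tau M^P(\tau^*) \ne 0$, and the implicit function theorem applied at $(\tau^*, 0)$ then produces some $\eps_1 \in (0,\eps_0)$ and a $\C^1$ branch $\eps \mapsto \tau^*_\eps$, defined for $0 < \eps < \eps_1$, with $\tilde D_\eps(\tau^*_\eps) = 0$ (equivalently $D_\eps(\tau^*_\eps) = 0$) and $\partial_\tau \tilde D_\eps(\tau^*_\eps) \ne 0$ (equivalently $\partial_\tau D_\eps(\tau^*_\eps) \ne 0$), which is precisely the non-degeneracy required. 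Taylor-expanding $\tilde D_\eps$ about $\tau^*$ and using $\tilde D_\eps(\tau^*) = O(\eps^\varrho)$ together with $\partial_\tau \tilde D_\eps(\tau^*) \to \partial_\tau M^P(\tau^*) \ne 0$ gives the quantitative rate of convergence of $\tau^*_\eps$ to $\tau^*$ as asserted in the lemma, with the $\C^1$ bound taken with respect to the auxiliary parameters $(I,\phi,t_0)$ on which $\tau^*_\eps$ depends smoothly by the parametric version of the IFT.

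For the geometric conclusion, I would invoke the graph description introduced in Section \ref{sec:transverse}: on the section $\Sigma_{t_0}$, both $W^\st(\Lambda_\eps(\theta^{t_0}\omega))$ and $W^\un(\Lambda_\eps(\theta^{t_0}\omega))$ are $3$-dimensional graphs $P = P^{\st,\un}(I,\phi,\tau,t_0)$ over $(I,\phi,\tau)$ inside the $4$-dimensional section, and they meet at the zero set of $D_\eps = P^\st - P^\un$. At the homoclinic point $\tz_\eps$ corresponding to $\tau^*_\eps$, the condition $\partial_\tau D_\eps(\tau^*_\eps) \ne 0$ implies $\nabla_{(I,\phi,\tau)} D_\eps \ne 0$; by the standard transversality criterion for two graphs of real-valued functions, this is equivalent to $T_{\tz_\eps} W^\st + T_{\tz_\eps} W^\un = T_{\tz_\eps}\Sigma_{t_0}$, so $W^\st$ and $W^\un$ intersect transversally along a locally $2$-dimensional homoclinic submanifold through $\tz_\eps$.

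The main obstacle I anticipate is making sure that the $\C^1$ remainder in Proposition \ref{thm:transverse_homoclinic} is genuinely uniform in $\tau$ with constants independent of $\eps$, so that the rescaled family $\tilde D_\eps$ is continuous up to $\eps = 0$ in the sense needed for the IFT. This is exactly where the hypothesis $t_0 \in Q_{A_\delta,T_\delta}(\omega)$ does the work: along the whole trajectory the noise $\omega(t_0 + s)$ is dominated by $A_\delta + B|s|$, so the Gronwall-based estimates underlying Proposition \ref{thm:transverse_homoclinic} produce constants that depend only on $A_\delta$, $B$, $\varrho$, and the uniform $\C^2$-norm of $H_1$, and not on $\eps$; without this sub-linear control the rescaling step would break down and only the original Melnikov-order estimate would survive.
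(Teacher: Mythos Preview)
Your argument is correct and is precisely the standard Melnikov--IFT mechanism one has in mind here. The paper itself offers no proof: the lemma is introduced with ``The following is immediate'' and nothing further is written. Your write-up therefore does not so much diverge from the paper's proof as supply the details the authors chose to suppress: rescale the splitting by $\eps^{-1}$ so that the leading term is $M^P$ with a $\C^1$-small remainder $O(\eps^{\varrho})$, apply the implicit function theorem at $(\tau^*,0)$, and read off transversality from the graph description of $W^{\st,\un}$ over $(I,\phi,\tau)$ in $\Sigma_{t_0}$.

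One minor observation: the rate your argument actually produces is $\tau^*_\eps - \tau^* = O(\eps^{\varrho})$, since $\tilde D_\eps(\tau^*) = O(\eps^{\varrho})$ while $\partial_\tau \tilde D_\eps(\tau^*)$ is bounded away from zero. The exponent $1+\varrho$ printed in the lemma statement appears to be a slip in the paper rather than something your method fails to reach; the $\eps^{\varrho}$ bound is what is needed downstream and what the rescaling naturally gives.
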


We will show that $W^\st(\Lambda_\eps(\theta^{t_0}\omega))$ and $W^\un(\Lambda_\eps(\theta^{t_0}\omega))$ intersect transversally in Section \ref{sec:existence_transverse}.

Towards this goal, we start by making  the following key observation:

\begin{rem}\label{rem:important}
The argument in the proof of Proposition \ref{thm:transverse_homoclinic} shows that the  mapping
\[(I,\phi,\tau,\t)\mapsto M^P(I,\phi,\tau,\t)\] is well-defined for all $\t\in\mathbb{R}$, and
not only for $\t\in Q_{A_\delta,T_\delta}(\omega)$.
Indeed, for any $\t\in\mathbb{R}$, by Lemma \ref{lem:eta}  we have
$|\omega(\t+s)| \le A_{\theta^{\t}\omega}+B|s|$ for all $s$.
Therefore, for  $\omega$ and $\t$ fixed, the integrand in \eqref{eqn:tau_integral}  is exponentially convergent to $0$ as $s\mapsto \pm\infty$.
\end{rem}

\begin{prop}[Time invariance of $M^P$]\label{prop:time_invariance}{ $ $}

\begin{itemize}
\item [(i)]
For any $\varsigma\in\mathbb{R}$ we have
\begin{equation}\label{eqn:M_invariance}
   M^P(I,\phi,\tau,  \t)=M^P(I,\phi+\nu(I)\varsigma,\tau+\varsigma,  \t+\varsigma ).
\end{equation}

\item [(ii)]If the mapping \[\tau\mapsto M^P(I,\phi,\tau, \t ),\]
has a non-degenerate zero at $\tau^*(I,\phi,\t)$,
then for any $\varsigma\in\mathbb{R}$, the mapping
\[\tau\mapsto M^P(I,\phi+\nu(I)\varsigma,\tau, \t+\varsigma )\] has a non-degenerate zero $\tau^*(I,\phi+\nu(I)\varsigma, \t+\varsigma )$, and
\begin{equation}\label{eqn:tau_invariance}
   \tau^*(I,\phi+\nu(I)\varsigma, \t+\varsigma )=\tau^*(I,\phi,\t )+\varsigma.
\end{equation}

\item [(iii)] If the mapping \[ \t  \mapsto M^P(I,\phi,\tau, \t ),\]
has a non-degenerate zero at $t^*(I,\phi,\tau)$,
then for any $\varsigma\in\mathbb{R}$, the mapping
\[\t\mapsto M^P(I,\phi+\nu(I)\varsigma,\tau+\varsigma, \t )\] has a non-degenerate zero $\t^*(I,\phi+\nu(I)\varsigma,\tau+\varsigma)$, and
\begin{equation}\label{eqn:t_invariance}
   \t^*(I,\phi+\nu(I)\varsigma,\tau+\varsigma)=t^*(I,\phi,\tau)+\varsigma.
\end{equation}
\end{itemize}
\end{prop}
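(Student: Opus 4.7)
The proposition asserts a translation-covariance of the Melnikov functional $M^P$ under the joint shift $(\phi,\tau,\t)\mapsto (\phi+\nu(I)\varsigma,\tau+\varsigma,\t+\varsigma)$, together with its consequence for the location of non-degenerate zeros. Since by Remark~\ref{rem:important} the integral in \eqref{eqn:tau_integral} is absolutely convergent for every $\t\in\mathbb{R}$, the whole proof reduces to a single change of variable inside the integral; part (ii) and (iii) then follow by routine bookkeeping, as I outline below.

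For part (i), I would write out
\begin{equation*}
M^P(I,\phi+\nu(I)\varsigma,\tau+\varsigma,\t+\varsigma)=\int_{-\infty}^{\infty}\{P,H_1\}\bigl(I,\phi+\nu(I)(\varsigma+s),p_0(\tau+\varsigma+s),q_0(\tau+\varsigma+s)\bigr)\,\omega(\t+\varsigma+s)\,ds
\end{equation*}
and perform the substitution $s':=s+\varsigma$. The Jacobian is $1$ and the limits of integration are unchanged (absolute convergence, by Remark~\ref{rem:important}, justifies the manipulation). The three occurrences of $\varsigma$ inside the integrand collapse onto $s'$ identically, and the integral becomes $M^P(I,\phi,\tau,\t)$, proving \eqref{eqn:M_invariance}.

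For part (ii), I would simply evaluate the identity \eqref{eqn:M_invariance} at $\tau=\tau^*(I,\phi,\t)$, obtaining
\begin{equation*}
0=M^P(I,\phi,\tau^*(I,\phi,\t),\t)=M^P\bigl(I,\phi+\nu(I)\varsigma,\tau^*(I,\phi,\t)+\varsigma,\t+\varsigma\bigr),
\end{equation*}
which exhibits $\tau^*(I,\phi,\t)+\varsigma$ as a zero of $\tau\mapsto M^P(I,\phi+\nu(I)\varsigma,\tau,\t+\varsigma)$. To upgrade this to a non-degenerate zero, I would differentiate \eqref{eqn:M_invariance} in $\tau$ (the differentiation under the integral sign is again justified by the exponential decay of the integrand), obtaining
\begin{equation*}
\partial_\tau M^P(I,\phi,\tau,\t)=\partial_\tau M^P(I,\phi+\nu(I)\varsigma,\tau+\varsigma,\t+\varsigma).
\end{equation*}
Evaluating at $\tau=\tau^*(I,\phi,\t)$ and using the non-degeneracy hypothesis on the left-hand side gives the same non-vanishing on the right-hand side, which proves \eqref{eqn:tau_invariance}. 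Part (iii) is the mirror image argument: evaluate \eqref{eqn:M_invariance} at $\t=t^*(I,\phi,\tau)$ to get the zero at $\t=t^*(I,\phi,\tau)+\varsigma$, and differentiate \eqref{eqn:M_invariance} in $\t$ to transfer the non-degeneracy.

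There is essentially no serious obstacle: the only subtlety is checking that the change of variable and the differentiation under the integral sign are both legitimate, but both follow from the exponential decay of $\{P,H_1\}$ along the homoclinic orbit combined with the sub-linear bound $|\omega(\t+s)|\le A_{\theta^{\t}\omega}+B|s|$ from Lemma~\ref{lem:eta}, which is already invoked in Remark~\ref{rem:important}. Once \eqref{eqn:M_invariance} is in hand, parts (ii) and (iii) are purely formal.
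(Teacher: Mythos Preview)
Your proposal is correct and follows essentially the same route as the paper: a single change of variable $s\mapsto s+\varsigma$ in the Melnikov integral yields \eqref{eqn:M_invariance}, and parts (ii) and (iii) are then immediate consequences. If anything, you are slightly more explicit than the paper in spelling out why non-degeneracy transfers (by differentiating the identity in $\tau$ or $\t$), whereas the paper simply asserts that a non-degenerate zero of one integral maps to a non-degenerate zero of the shifted one.
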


\begin{proof}
In \eqref{eqn:tau_integral} make the change of variable $t\mapsto \varsigma+t$, obtaining
\begin{equation}\label{eqn:change_var}
\begin{split}
\int_{-\infty}^{\infty}  &\{P,H_1\}(I,\phi+\nu(I) s,p_0(\tau+s),q_0(\tau+s)) \omega(\t+s)  \, d s\\
=& \int_{-\infty}^{\infty}  \{P,H_1\}(I,\phi+\nu(I)\varsigma+\nu(I) s,p_0(\tau+\varsigma+s),q_0(\tau+\varsigma+s)) \omega(\t+\varsigma+s )  \, d s.
\end{split}
\end{equation}

Thus, if $\tau^*(I,\phi,\t)$ is a non-degenerate zero of the first integral, then $\tau^*(I,\phi,\t)+\varsigma$ is a non-degenerate zero for the second integral,
and hence $\tau^*(I,\phi,\t)+\varsigma=\tau^*(I,\phi+\nu(I)\varsigma,\t+\varsigma)$.

Similarly, if $\t^*(I,\phi,\tau)$ is a  non-degenerate zero for the first integral, then $\t^*+\varsigma= \t^*(I,\phi,\tau)+\varsigma$  is non-degenerate  zero for the second integral.
\end{proof}

\subsection{Existence of transverse homoclinic intersections}
\label{sec:existence_transverse}

Let $\mathscr{F}:\mathbb{R}\to \mathbb{R}$ a $C^1$-function with the property that $\mathscr{F}(\t)\to 0$ as $\t\to+\infty$ and   $D{\mathscr{F}}(\t)\to 0$ as $\t\to+\infty$ exponentially fast, where $D({\cdot})$ denotes the derivative of a function.

\begin{lem}\label{lem:process}
Let
\[M(\t)=\int_{-\infty}^{\infty} \mathscr{F}(s)\theta^{\t}\omega(s) ds. \]

Then the process $M(\t)$ is a stationary Gaussian process with expectation
\begin{equation}\label{eqn:Mexp}
 E[M(\t)] =0
\end{equation}
and autocorrelation function
\begin{equation}\label{eqn:Mauto}
\begin{split}
\rho(h)=&E[M(\t)  M(\t+h)]\\=&\int_{-\infty}^{\infty} \int_{-\infty}^{\infty}\mathscr{F}(s_1) \mathscr{F}(s_2)r(s_2-s_1+h)ds_1ds_2,\end{split}\end{equation}
where $r$ is the autocorrelation function of $\eta$.
The integral is independent of $\t$ as the process is stationary.
\end{lem}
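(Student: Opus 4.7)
The plan is to verify each claim directly: (a) the improper integral defining $M(\t)$ makes sense almost surely, (b) $M(\t)$ is Gaussian, (c) its mean is zero, (d) its covariance has the claimed form and depends only on $h$, and (e) these two facts together yield stationarity.

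First I would check well-definedness. By hypothesis $|\mathscr{F}(s)| \le C_1 e^{-\lambda|s|}$ for constants $C_1,\lambda>0$ (exponential decay). By Lemma \ref{lem:eta}, for a.e.\ $\omega\in\Omega$ there exist $A_\omega>0$ and $B>0$ such that $|\omega(\t+s)|\le A_\omega+B|\t|+B|s|$ for all $s\in\R$. Hence
\[
\int_{-\infty}^{\infty}|\mathscr{F}(s)||\omega(\t+s)|\,ds \le C_1\int_{-\infty}^{\infty} e^{-\lambda|s|}\bigl(A_\omega+B|\t|+B|s|\bigr)\,ds<\infty,
\]
so the integral defining $M(\t)(\omega)$ converges absolutely and $M(\t)$ is a well-defined real-valued random variable for each $\t$.

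Next I would establish Gaussianity. Approximate the integral by Riemann sums
\[
M_n(\t)=\sum_{k=-n^2}^{n^2} \mathscr{F}(s_k^{(n)})\,\eta(\t+s_k^{(n)})\,\Delta s_k^{(n)},
\]
on a partition of $[-n,n]$ with mesh tending to $0$. Each $M_n(\t)$ is a finite linear combination of jointly Gaussian random variables $\eta(\t+s_k^{(n)})$ and therefore Gaussian. Using $E[\eta(u)^2]=1$ and the Cauchy--Schwarz bound $|E[\eta(u_1)\eta(u_2)]|\le 1$, one checks by dominated convergence (with the exponential majorant above) that $M_n(\t)\to M(\t)$ in $L^2(\Omega)$. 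Since the $L^2$-limit of Gaussian random variables is Gaussian, $M(\t)$ is Gaussian; the same argument applied to finite linear combinations $\sum_i a_i M(\t_i)$ shows that $(M(\t_1),\dots,M(\t_n))$ is jointly Gaussian.

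For the mean, Fubini (justified by $\int|\mathscr{F}(s)|E[|\eta(\t+s)|]\,ds = \sqrt{2/\pi}\int|\mathscr{F}(s)|\,ds<\infty$) gives
\[
E[M(\t)] = \int_{-\infty}^{\infty}\mathscr{F}(s)E[\eta(\t+s)]\,ds = 0,
\]
using \eqref{eqn:noise_1}. For the covariance, another application of Fubini (justified by $|\mathscr{F}(s_1)\mathscr{F}(s_2)r(s_2-s_1+h)|\le |\mathscr{F}(s_1)||\mathscr{F}(s_2)|$ since $|r|\le r(0)=1$) yields
\[
E[M(\t)M(\t+h)] = \int_{-\infty}^{\infty}\!\!\int_{-\infty}^{\infty}\mathscr{F}(s_1)\mathscr{F}(s_2)E[\eta(\t+s_1)\eta(\t+h+s_2)]\,ds_1\,ds_2,
\]
and by stationarity of $\eta$ the expectation equals $r(s_2-s_1+h)$, giving \eqref{eqn:Mauto}; in particular the result is independent of $\t$.

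Finally, stationarity of $M$ follows because the finite-dimensional distributions of a Gaussian process are determined by its mean and covariance: the mean is identically $0$, and $E[M(\t_i)M(\t_j)]=\rho(\t_j-\t_i)$ depends only on the differences $\t_j-\t_i$, so $(M(\t_1+h),\dots,M(\t_n+h))$ has the same law as $(M(\t_1),\dots,M(\t_n))$ for every $h$. The main technical point is purely bookkeeping: one must keep careful control of the $\omega$-dependent growth constant $A_\omega$ when exchanging limits, but because $\mathscr{F}$ decays exponentially and $\eta$ has bounded second moments, the dominated convergence and Fubini applications are routine.
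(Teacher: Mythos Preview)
Your proof is correct and follows the same underlying approach as the paper: verify directly that a linear functional of a stationary mean-zero Gaussian process is again stationary, mean-zero Gaussian, and compute the covariance by Fubini. The paper's own proof is considerably terser---it simply asserts that ``since $\eta(\t)$ is a stationary Gaussian process with mean $0$, $M(\t)$ is also a stationary Gaussian process with mean $0$'' and refers to \cite[Lemma~4.4]{yagasaki2018melnikov} for details; it then goes on (within the same proof block) to compute the spectral moments $\chi_0$ and $\chi_2$ via a change of variables and integration by parts, material which is not part of the lemma statement itself but is used in the subsequent Proposition~\ref{prop:Mprocess}. Your version supplies the well-definedness, $L^2$-approximation, and Fubini justifications that the paper leaves implicit, so it is more self-contained but not a different argument.
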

\begin{proof}
The proof is similar to  \cite[Lemma 4.4]{yagasaki2018melnikov}.
Since $\eta(\t)$ is a stationary Gaussian process with mean $0$, $M(\t)$ is also a stationary Gaussian process with mean $0$.

The variance (zeroth spectral moment) of $M(\t)$ is given by  \[\chi_0=\rho(0)=\int_{-\infty}^{\infty}\int_{-\infty}^{\infty} \mathscr{F}(s_1)\mathscr{F}(s_2)r(s_2-s_1) ds_1ds_2.\]
Performing a change of variable $(s_1,s_2-s_1)\mapsto (\t,s)$ we can write
\[\chi_0=\rho(0)=\int_{-\infty}^{\infty}\int_{-\infty}^{\infty} \mathscr{F}(\t)\mathscr{F}(\t+s)r(s) dtds.\]

By definition, the autocorrelation function of $M(\t)$  is given by
\[\rho(h):=E\left[M(\t+h)M(\t)\right].\]

Taking the second derivative with respect to $\t$ of
\[\rho(h)=\int_{-\infty}^{\infty}\int_{-\infty}^{\infty} \mathscr{F}(s_1)\mathscr{F}(s_2)r(s_2-s_1+h) ds_1 ds_2\]
and applying integration by parts twice we obtain
\[\frac{d^2}{dh^2}\rho(h)=-\int_{-\infty}^{\infty}\int_{-\infty}^{\infty}D{\mathscr{F}}(s_1)D{\mathscr{F}}(s_2)r(s_2-s_1+h) ds_1ds_2,\]
where  $D{\mathscr{F}}(s_1)=\frac{\partial \mathscr{F}}{\partial s_1 }$ and $D{\mathscr{F}}(s_2)=\frac{\partial \mathscr{F}}{\partial s_2}$.
For  integration by parts  we have used that $\lim_{s\to \pm\infty}\mathscr{F}(s)= 0$ exponentially fast together with its derivative, and that $r$ is a bounded function together with its derivative.

Performing a change of variable $(s_1,s_2-s_1)\mapsto (\t,s)$ and setting $h=0$ we obtain that the  second spectral moment of $M$ is given by
\[\chi_2=-\frac{d^2}{dh^2}\rho(h)_{\mid h=0}=\int_{-\infty}^{\infty}\int_{-\infty}^{\infty} D{\mathscr{F}}(\t)D{\mathscr{F}}(\t+s)r(s) dt ds.\]
\end{proof}

Denote \begin{equation}\begin{split}
\mathscr{P}(I,\phi,\tau,s)=&\{P,H_1\}\left(I,\phi+\nu(I)s,p_0(\tau+s),q_0(\tau+s)\right).
\end{split}\end{equation}
When $( I,\phi,\tau)$ are fixed and we only want to emphasize the dependence on $s$ we denote the above function by $\mathscr{P}(s)$.
Note that $\mathscr{P}(s)$ converges to $0$ exponentially fast together with its derivative as $s\to\pm\infty$.

Define  the \emph{Melnikov stochastic process}
\begin{equation}
\label{eqn:melnikov_process}
\t\in\mathbb{R}\mapsto M^P(\t):= \int_{-\infty} ^{+\infty} \mathscr{P}(s)\theta^{\t}\omega(s) d s.
\end{equation}

\begin{prop}\label{prop:Mprocess}
The Melnikov process \eqref{eqn:melnikov_process}  is a stationary Gaussian process with expectation
\begin{equation}\label{eqn:Mexp}
 E[M^P(\t)] =0
\end{equation}
and autocorrelation function
\begin{equation}\label{eqn:Mauto}
\begin{split}
\rho(h)=&E[M^P(\t)  M^P(\t+h)]\\=&\int_{-\infty}^{\infty} \int_{-\infty}^{\infty}\mathscr{P}(s_1) \mathscr{P}(s_2)r(s_2-s_1+h)ds_1ds_2,\end{split}\end{equation}
with the right-side independent of $\t$ as the process is stationary.
The zeroth  spectral moments is:
\[\chi^P_0 =\rho(0)=\int_{-\infty}^{\infty} \int_{-\infty}^{\infty}\mathscr{P}(s_1) \mathscr{P}(s_1+s)r(s)ds_1 ds\]
and the second spectral moment is:
\[\chi^P_2=-\left(\frac{d^2}{d h^2}\rho (h)\right )_{\mid h=0}=\int_{-\infty}^{\infty}\int_{-\infty}^{\infty} D{\mathscr{P}}(s_1) D{\mathscr{P}}(s_1+s)r(s) ds_1 ds.\]
\end{prop}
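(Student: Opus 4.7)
The plan is to reduce Proposition \ref{prop:Mprocess} to a direct application of Lemma \ref{lem:process}, since the latter gives precisely the conclusions we need once we check that the integrand $\mathscr{P}(I,\phi,\tau,s)$, viewed as a function of $s$ for fixed $(I,\phi,\tau)$, satisfies the decay hypotheses required.

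First, I would verify the exponential decay of $\mathscr{P}(s)$ and its derivative $D\mathscr{P}(s)$ as $s \to \pm\infty$. Recall that
\[
\{P,H_1\}(I,\phi,p,q) = V'(q)\frac{\partial H_1}{\partial p}(I,\phi,p,q) - p\,\frac{\partial H_1}{\partial q}(I,\phi,p,q),
\]
which vanishes at the hyperbolic fixed point $(p,q) = (0,0)$ since $V'(0) = 0$ by condition (P-ii). Because $H_1$ is uniformly $\C^2$ and $(p_0(\tau+s),q_0(\tau+s))\to (0,0)$ at rate $e^{-\beta|s|}$ (with $\beta = (-V''(0))^{1/2}$), a Taylor expansion of $\{P,H_1\}$ around $(0,0)$ shows that $\mathscr{P}(s)=O(e^{-\beta|s|})$ as $s\to\pm\infty$. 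Differentiating in $s$ produces only factors that remain uniformly bounded along the homoclinic orbit (derivatives of $H_1$, $V'$, together with $(\dot p_0,\dot q_0)$, all of which either decay exponentially or stay bounded), so $D\mathscr{P}(s)=O(e^{-\beta|s|})$ as well. Thus the hypotheses of Lemma \ref{lem:process} hold with $\mathscr{F}=\mathscr{P}$.

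Next, I would invoke Lemma \ref{lem:process} with $\mathscr{F}=\mathscr{P}$ to obtain immediately that $M^P(\t)$ is a stationary Gaussian process with $E[M^P(\t)]=0$ and with autocorrelation function
\[
\rho(h)=\int_{-\infty}^{\infty}\int_{-\infty}^{\infty}\mathscr{P}(s_1)\mathscr{P}(s_2)\,r(s_2-s_1+h)\,ds_1\,ds_2.
\]
The zeroth spectral moment $\chi^P_0=\rho(0)$ follows by setting $h=0$ and performing the change of variables $(s_1,s_2)\mapsto (s_1,s_1+s)$, which is exactly the formula stated. For the second spectral moment $\chi^P_2=-\rho''(0)$, I would differentiate under the integral sign twice in $h$ and use integration by parts in $s_2$ (moving two $h$-derivatives onto $\mathscr{P}$); the boundary terms vanish because $\mathscr{P}$ and $D\mathscr{P}$ decay exponentially and $r$ and $r'$ are bounded by \eqref{eqn:noise_2}. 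After the change of variable $(s_1,s_2)\mapsto (s_1,s_1+s)$, this yields the stated expression for $\chi^P_2$.

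The only delicate point — which I expect to be essentially routine given the groundwork already laid — is the justification of differentiation under the integral and integration by parts in the computation of $\chi^P_2$. This requires the combination of exponential decay of $\mathscr{P}$, $D\mathscr{P}$ and the integrability and boundedness of $r$ guaranteed by \eqref{eqn:noise_2}; these conditions together produce dominating integrable functions, making Fubini and differentiation under the integral sign valid. With that in place, the proof is a direct specialization of Lemma \ref{lem:process}.
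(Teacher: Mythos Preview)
Your proposal is correct and follows exactly the paper's approach: the paper's proof is the single line ``It follows immediately from Lemma \ref{lem:process},'' relying on the observation (stated just before the proposition) that $\mathscr{P}(s)$ and $D\mathscr{P}(s)$ decay exponentially. Your write-up simply makes explicit the verification of that decay and the mechanics already contained in the proof of Lemma \ref{lem:process}, so there is nothing to add or correct.
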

\begin{proof} It follows immediately from Lemma \ref{lem:process}.
\end{proof}

We will make the following assumption on the spectral moments of $M^P$:

\begin{equation}\label{eqn:spectral_moments}\tag{SMP}
\chi^P_0>0 \textrm{ and }\chi^P_2>0.
\end{equation}

\begin{prop} \label{prop:nondegenerate}
Assume   condition \eqref{eqn:spectral_moments}.
Fix $( I^*,\phi^*,\tau^*)$. Then  the mapping
\[\t\in\mathbb{R} \mapsto  M^P( I^*,\phi^*, \tau^*,\t)\]
has a non-degenerate zero $\t^*\in\mathbb{R}$.
\end{prop}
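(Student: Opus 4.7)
The plan is to apply Rice's formula to the stationary Gaussian process $\t \mapsto M^P(\t) := M^P(I^*, \phi^*, \tau^*, \t)$. By Proposition \ref{prop:Mprocess} this is mean-zero and stationary, and its autocorrelation $\rho$ has spectral moments $\chi_0^P, \chi_2^P$ that are finite (because $\mathscr{P}(s)$ and $D\mathscr{P}(s)$ decay exponentially as $s \to \pm\infty$, while $r$ is absolutely integrable by \eqref{eqn:noise_2}) and positive by assumption \eqref{eqn:spectral_moments}. Finiteness of $\chi_2^P$ gives $M^P \in C^1$ a.s., and, by stationarity, $\mathrm{Cov}(M^P(\t), (M^P)'(\t)) = -\rho'(0) = 0$, so the Gaussian vector $(M^P(\t), (M^P)'(\t))$ has non-degenerate covariance $\mathrm{diag}(\chi_0^P, \chi_2^P)$ for each $\t$.

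First I would invoke Rice's formula (see e.g.\ \cite{lindgren2012stationary}): under the above assumptions,
\[ E\bigl[\#\{\t \in [0,T] : M^P(\t) = 0\}\bigr] = \frac{T}{\pi}\sqrt{\frac{\chi_2^P}{\chi_0^P}}, \]
which is strictly positive and linear in $T$ by \eqref{eqn:spectral_moments}.

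Next I would pass from expectation to almost-sure existence using ergodicity. Since $M^P(\t) = F(\theta^{\t}\omega)$ for the measurable linear functional $F(\omega) = \int \mathscr{P}(s)\omega(s)\,ds$, the process $M^P$ inherits stationarity from $\theta^t$; moreover its autocorrelation $\rho(h)$ decays to $0$ as $|h|\to\infty$ because $\rho$ is a double convolution involving the integrable $r$ and the rapidly decaying $\mathscr{P}$, so Maruyama's theorem yields ergodicity of $M^P$. The zero-counting functional on unit intervals, $\omega \mapsto N([0,1])(\omega) := \#\{\t \in [0,1] : M^P(\t)(\omega) = 0\}$, is measurable and integrable by Step 1, and satisfies $N([k,k+1])(\omega) = N([0,1])(\theta^{k}\omega)$. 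The Birkhoff ergodic theorem then gives, for a.e. $\omega$,
\[ \lim_{T\to\infty}\frac{1}{T}\#\{\t \in [0,T] : M^P(\t)(\omega) = 0\} = \frac{1}{\pi}\sqrt{\frac{\chi_2^P}{\chi_0^P}} > 0, \]
so $M^P$ has infinitely many zeros on $\mathbb{R}$ for a.e. $\omega$.

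Finally I would upgrade one such zero to a non-degenerate one. Because the joint density of $(M^P(\t),(M^P)'(\t))$ is a bivariate Gaussian with positive-definite covariance $\mathrm{diag}(\chi_0^P,\chi_2^P)$, a standard Bulinskaya-type argument (cf.\ \cite{lindgren2012stationary}) shows that, almost surely, no zero $\t^*$ of $M^P$ also satisfies $(M^P)'(\t^*)=0$; hence every zero produced by the previous step is non-degenerate, yielding the required $\t^*$. The main obstacle is the honest verification of the hypotheses of Rice's formula and of the Bulinskaya lemma for $M^P$: finite positive spectral moments, $C^1$-regularity of sample paths, and non-degeneracy of the $(M^P,(M^P)')$-joint law. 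All of these follow from the exponential decay of $\mathscr{P}$ combined with \eqref{eqn:noise_2}, \eqref{eqn:noise_3}, and \eqref{eqn:spectral_moments}, but the argument relies on classical, non-trivial results from the theory of level crossings of stationary Gaussian processes.
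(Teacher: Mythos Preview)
Your proposal is correct and follows essentially the same approach as the paper: apply Rice's formula to the stationary Gaussian process $M^P(\t)$ to obtain $E[N_T]=\frac{T}{\pi}\sqrt{\chi^P_2/\chi^P_0}$, and use that zeros are almost surely non-tangential. Your write-up is in fact more careful than the paper's, which simply records the Rice expectation and asserts ``$M^P(\t)$ has almost surely no tangential zeroes'' without spelling out the passage from positive expectation to almost-sure existence; your explicit appeal to ergodicity and Birkhoff for that step, and to a Bulinskaya-type argument for non-degeneracy, fills in exactly what the paper leaves implicit.
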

\begin{proof}
Applying the Rice's Formula \cite{rice1939distribution,lindgren2012stationary} to the Melnikov process \[\t\mapsto M^P(\t),\] the number $N_T $ of zeros of $M^P(\t)$ on the interval $[0,T]$ has expectation
\begin{equation}\label{eqn:rice}
  E[N_T]=\frac{T}{\pi}\sqrt{\frac{\chi^P_2}{\chi^P_0}}e^{-\frac{ E[M^P(\t)]^2}{2\chi^P_0}}=\frac{T}{\pi}\sqrt{\frac{\chi^P_2}{\chi^P_0}},
  \end{equation}
and $M^P(\t)$ has almost surely no tangential zeroes. This yields the desired conclusion.
\end{proof}
%

\begin{proof}[\bf Proof of Theorem \ref{prop:transverse}]
Let $\t^*=\t^*(I^*,\phi^*,\tau^*)$ be a non-degenerate zero from Proposition \ref{prop:nondegenerate}, for some $I^*,\phi^*,\tau^*$ fixed.
Then  $\tau^*$ is a zero of \[\tau \mapsto M^P(I^*,\phi^*,\tau,\t^*).\]
Moreover,   from  Proposition \ref{prop:time_invariance}, \eqref{eqn:tau_invariance} and \eqref{eqn:t_invariance}, it follows that $\tau^*$ is a non-degenerate zero.
Indeed, the fact that $\t^*$ is a non-degenerate zero implies that there are $\varsigma>0$ arbitrarily close to $0$ such that one of the following holds:
\[\begin{split}
M(I,\phi+\nu(I)(-\varsigma),   \tau^*-\varsigma ,t^*_0-\varsigma)<0 < M(I,\phi+\nu(I) \varsigma , \tau^*+\varsigma,t^*_0+\varsigma),\\
M(I,\phi+\nu(I)(-\varsigma),   \tau^*-\varsigma ,t^*_0-\varsigma)>0  > M(I,\phi+\nu(I) \varsigma , \tau^*+\varsigma,t^*_0+\varsigma).
\end{split}
\]
This implies that $\tau^*$ is a non-degenerate zero of  \[\tau \mapsto M^P(I^*,\phi^*,\tau,\t^*).\]
Therefore, $\tau^*$ is (locally) uniquely defined by $I^*,\phi^*,\t^*$, so we can write $\tau^*=\tau^*(I^*,\phi^*,\t^*)$.

If $t_0:=\t^*\in  Q_{A_\delta,T_\delta}(\omega)$, then
$W^\un(\Lambda_\eps(\theta^{t_0}\omega))$ and $W^\st(\Lambda_\eps(\theta^{t_0}\omega))$ are well defined. By Lemma \ref{lem:zero_trans}, it follows that  they intersect transversally for $\eps<\eps_1$.

If $\t^*\not\in  Q_{A_\delta,T_\delta}(\omega)$  then take  $\varsigma^*\in\mathbb{R}$ such that $t_0:=\t^*+\varsigma^*\in Q_{A_\delta,T_\delta}(\omega)$. Such a $\varsigma^*$ always exists due to Proposition \ref{lem:Yagasaki}. By Proposition \ref{prop:time_invariance} we have that
$\tau^*(I^*,\phi^*+\nu(I^*)\varsigma,t^*_0+\varsigma^*)=\tau^* +\varsigma^*$ is a non-degenerate zero of
\[\tau \mapsto M^P(I^*,\phi^*+\nu(I^*)\varsigma,\tau^*,\t^*+\varsigma^*).\]
This implies that $W^\un(\Lambda_\eps(\theta^{\t^*+\varsigma^*}\omega))$ and $W^\st(\Lambda_\eps(\theta^{\t^*+\varsigma^*}\omega))$ intersect transversally for all $0<\eps<\eps_1$ for some $\eps_1<\eps_0$ sufficiently small.
\end{proof}

\begin{rem}
Recall that  $\tau$ corresponds to a `position' along the homoclinic orbit of the unperturbed system.
The typical way to use Melnikov theory to show existence of transverse intersection of the perturbed stable and unstable manifolds is to vary the position $\tau$ until we reach a place where the distance between the manifolds is $0$.

We emphasize that in our argument above -- Proposition \ref{prop:transverse} -- we first fix the position $\tau$. Then we show that there is a time $t$ when the distance between the manifolds is $0$ at that  fixed position. In other words, we  wait  until the noise  pushes  the manifolds to cross one another.

An extra complication is that the perturbed stable and unstable manifolds are not well-defined for all times. We have to adjust $t$ to a time where the perturbed stable and unstable manifolds are well-defined. To achieve this, we also adjust the location $\tau$.
For this, we use the time invariance of the Melnikov integral -- Proposition \ref{prop:time_invariance}.
\end{rem}

Theorem \eqref{prop:transverse}  gives us a homoclinic point $\tz $ at the transverse intersection between
$W^{\st}  (\Lambda_\eps(\theta^{t_0}(\omega))$ and $W^{\un}  (\Lambda_\eps(\theta^{t_0}(\omega))$, for some $t_0\in Q_{A_\delta,T_\delta}(\omega)$.
Consider the homoclinic orbit $\tPhi^t_\eps(\tz_\eps)$.
Unlike  the deterministic case, we cannot guarantee that $\tPhi^t_\eps(\tz )\in W^{\st}  (\Lambda_\eps(\theta^{t_0+t}(\omega))\cap W^{\un}  (\Lambda_\eps(\theta^{t_0+t}(\omega))$, since $t_0+t$ may not be in $Q_{A_\delta,T_\delta}(\omega)$, and so the stable and unstable manifolds corresponding
to $\theta^{t_0+t}(\omega)$ are not guaranteed to exist (as graphs).
Nevertheless, the  homoclinic orbit $\tPhi^t_\eps(\tz)$ is asymptotic to the normally hyperbolic invariant manifold in both forward and backwards times, as in the deterministic case. This is given by the following:

\begin{cor}\label{cor:convergence}
Let $t_0\in Q_{A_\delta,T_\delta}(\omega)$.
If $\tz\in W^{\st}  (\Lambda_\eps(\theta^{t_0}(\omega))$ then there exists a unique point $\tz^+\in \Lambda_\eps(\theta^{t_0}(\omega))$ such that $d(\tPhi^t_\eps(\tz),\tPhi^t_\eps(\tz^+))\to 0$ as $t\to +\infty$.

Assuming condition \eqref{eqn:H_1}, it follows that $\tPhi^t_\eps(\tz)$ approaches $\tLambda_0$ as $t\to +\infty$.

Similarly,  if $\tz\in W^{\un}  (\Lambda_\eps(\theta^{t_0}(\omega))$ then there exists a unique point $\tz^-\in \Lambda_\eps(\theta^{t_0}(\omega))$ such that $d(\tPhi^t_\eps(\tz),\tPhi^t_\eps(\tz^-))\to 0$ as $t\to -\infty$. Assuming condition \eqref{eqn:H_1}, it follows that $\tPhi^t_\eps(\tz)$ approaches $\Lambda_0$ as $t\to -\infty$.
\end{cor}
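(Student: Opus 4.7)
The plan is to deduce both assertions from the asymptotic behavior \eqref{eqn:stable_unstable_equivalent} in the RNHIM theory applied to the modified system, and then to transfer the conclusion to the original system through Proposition \ref{prop:perturbed_flow}. The key structural observation is that for $t_0\in Q_{A_\delta,T_\delta}(\omega)$ one has $\theta^{t_0}\omega\in\Omega_{A_\delta}$, so $C_{A_\delta}(\theta^{t_0}\omega)=\mathbb{R}$ and the bump function $\psi_{A_\delta,\rho}(\cdot,\theta^{t_0}\omega)$ is identically $1$; consequently the modified flow $\hat{\Phi}^t_\eps$ coincides with $\tPhi^t_\eps$ along the entire orbit starting at $(z_0,t_0)$, and the RNHIM and its stable/unstable manifolds for the original system at parameter $\theta^{t_0}\omega$ are exactly those produced by Theorem \ref{thm:Bates1} for the modified system.

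First I would address the general part: given $\tz\in W^{\st}(\Lambda_\eps(\theta^{t_0}\omega))$, property \eqref{eqn:stable_unstable_equivalent} applied to $\hat{\Phi}^t_\eps$ provides a unique $\tz^+\in\Lambda_\eps(\theta^{t_0}\omega)$ such that $d(\hat{\Phi}^t_\eps(\tz),\hat{\Phi}^t_\eps(\tz^+))\to 0$ as $t\to+\infty$. The coincidence of the two flows noted above transfers this asymptotic to $\tPhi^t_\eps$, which is the first claim; the unstable counterpart follows by reversing time, using $W^{\un}$ in place of $W^{\st}$ and the second line of \eqref{eqn:stable_unstable_equivalent}. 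Uniqueness of $\tz^\pm$ is inherited directly from \eqref{eqn:stable_unstable_equivalent}.

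For the second assertion I would invoke \eqref{eqn:H_1}: the identities $H_1(I,\phi,0,0)=0$ and $DH_1(I,\phi,0,0)=0$ force $J\nabla H_1$ to vanish identically on $\tLambda_0=\{p=q=0\}$, so $\tLambda_0$ is invariant under $\tPhi^t_\eps$ with unchanged inner dynamics and unchanged normal hyperbolic splitting. Local uniqueness of the perturbed RNHIM supplied by Theorem \ref{thm:Bates1}(i) then identifies $\Lambda_\eps(\theta^{t_0}\omega)$ with $\tLambda_0$ itself, so $\tz^+\in\tLambda_0$ and the entire orbit $\tPhi^t_\eps(\tz^+)$ stays on $\tLambda_0$; combined with the first claim this yields $\dist(\tPhi^t_\eps(\tz),\tLambda_0)\to 0$ as $t\to+\infty$, and analogously for the unstable case. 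The only point that appears delicate is tracking $\tPhi^t_\eps(\tz)$ for unbounded $t$ while $\theta^{t_0+t}\omega$ generically leaves $\Omega_{A_\delta}$ (so that $\Lambda_\eps(\theta^{t_0+t}\omega)$ need not even be defined), but this is precisely what the passage through the modified system resolves: \eqref{eqn:stable_unstable_equivalent} is global in $t$ for $\hat{\Phi}^t_\eps$, and that globality is inherited by $\tPhi^t_\eps$ on the specific orbit issuing from $(z_0,t_0)$, thanks to Proposition \ref{prop:perturbed_flow}.
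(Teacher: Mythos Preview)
Your proposal is correct and follows essentially the same route as the paper: pass to the modified system (where the RNHIM theory and \eqref{eqn:stable_unstable_equivalent} apply globally in $t$), use that for $t_0\in Q_{A_\delta,T_\delta}(\omega)$ the bump function is identically $1$ so the modified and original flows coincide along the whole orbit, and then invoke \eqref{eqn:H_1} to conclude $\Lambda_\eps(\theta^{t_0}\omega)=\tLambda_0$. Your write-up is in fact more explicit than the paper's, particularly in spelling out the local-uniqueness argument for the identification $\Lambda_\eps=\tLambda_0$ and in flagging the subtlety that $\theta^{t_0+t}\omega$ may leave $\Omega_{A_\delta}$, which the paper handles only implicitly.
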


\begin{proof}
If $\tz\in W^{\st}  (\Lambda_\eps(\theta^{t_0}(\omega))$  then $\tPhi^t_\eps(\tz)$ is on the stable manifold of  $\Lambda_\eps(\theta^{t_0+t}(\omega))$ for the modified system \eqref{eqn:z_hat}.
This implies that $d(\tPhi^t_\eps(\tz),\tPhi^t_\eps(\tz^+))\to 0$ as $t\to+\infty$.
Since for our system $\Lambda_\eps(\theta^{t_0}(\omega))=\Lambda_0$ for all $t$ and $\omega$, it follows that $\tPhi^t_\eps(\tz)$ approaches $\Lambda_0$ as $t\to +\infty$.

A similar argument holds for the unstable manifold.
\end{proof}

%
%

\section{Existence of orbits that increase in action}

\subsection{Random scattering map}
\label{sec:random_scattering}
In this section we adapt the theory of the scattering map developed in \cite{DelshamsLS06c} for the case of random perturbations.
Our construction is very similar with the time-dependent scattering theory for general vector fields  developed in \cite{BlazevskiL11}.
The most significant difference is that in our case the scattering map depends on the realization of the stochastic process in a measurable fashion.

Let $t_0\in Q_{A_\delta,T_\delta}(\omega)$.

For a point $\tz\in W^\st(\Lambda_\eps(\theta^{t_0}\omega))$ (resp.\ $\tz\in W_\Lambda^\un(\Lambda_\eps(\theta^{t_0}\omega))$), we
denote by $\tz^+$ (resp.\ $\tz^-$) the unique point in $\Lambda_\eps(\theta^{t_0}\omega)$ which satisfies $\tz\in W^\sst(\tz^+,\theta^{t_0}\omega)$ (resp.  $\tz\in   W^\uun(\tz^-,\theta^{t_0}\omega)$). The stable and unstable fibers referred above are given by Theorem \ref{thm:Bates1}.

Then the canonical projections
\begin{equation}  \label{waveoperators}
\begin{split}
\Omega^{+}_\eps(\cdot,\theta^{t_0}\omega) : W^\st(\Lambda_\eps(\theta^{t_0}\omega))&\to \Lambda_\eps(\theta^{t_0}\omega), \\
\Omega^{+}_\eps(\tz, \theta^{t_0}\omega )=& \tz^{+},\\
\Omega^{-}_\eps (\cdot, \theta^{t_0}\omega ): W^\un(\Lambda_\eps(\theta^{t_0}\omega)) & \to \Lambda_\eps(\theta^{t_0}\omega), \\
\Omega^{-}_\eps(\tz,\theta^{t_0}\omega )=& \tz^{-}.
\end{split}
\end{equation}
are well defined $\C^{\ell-1}$ maps in $\tz$ and measurable in $\omega$.

Now, assume  there is  a homoclinic manifold
$\Gamma_\eps(\theta^{t_0}\omega) \subset W^\sst(\Lambda_\eps(\theta^{t_0}\omega))\cap W^\uun(\Lambda_\eps(\theta^{t_0}\omega))$
satisfying the following  conditions:
\begin{equation}\label{intersection}
\begin{split}
&T_{\tz} M = T_{\tz} W^\st(\Lambda_\eps(\theta^{t_0}\omega))+ T_{\tz} W^\un(\Lambda_\eps(\theta^{t_0}\omega)),\\
&T_{\tz} W^\st(\Lambda_\eps(\theta^{t_0}\omega)) \cap T_{\tz} W^\un(\Lambda_\eps(\theta^{t_0}\omega)) = T_{\tz} \Gamma_\eps(\theta^{t_0}\omega),\\
&T_{\tz} \Gamma_\eps(\theta^{t_0}\omega)  \oplus T_{\tz}  W^\sst(\tz^+,\theta^{t_0}\omega) = T_{\tz} W^\st(\Lambda_\eps(\theta^{t_0}\omega)),\\
&T_{\tz} \Gamma_\eps(\theta^{t_0}\omega)  \oplus T_{\tz}  W^\uun(\tz^-,\theta^{t_0}\omega) = T_{\tz} W^\un(\Lambda_\eps(\theta^{t_0}\omega)),
\end{split}
\end{equation}
for all $\tz\in  \Gamma_\eps(\theta^{t_0}\omega)$.

The first two conditions in \eqref{intersection} say that $W^\st(\Lambda_\eps(\theta^{t_0}\omega))$ and $W^\un(\Lambda_\eps(\theta^{t_0}\omega))$ intersect transversally
along $\Gamma_\eps(\theta^{t_0}\omega)$,
and the last two conditions say that $\Gamma_\eps(\theta^{t_0}\omega)$ is transverse to the stable and unstable foliations.
We now  consider the canonical projections $\Omega_\eps ^{\pm}(\cdot,\theta^{t_0}\omega)$
\eqref{waveoperators} restricted to $\Gamma_\eps(\theta^{t_0}\omega) $.
Under the assumption \eqref{intersection} we have that $\Gamma_\eps(\theta^{t_0}\omega) $ is $\C^{\ell-1}$ and that $\Omega_\eps^\pm(\cdot,\theta^{t_0}\omega)$ are $\C^{\ell-1}$ local diffeomorphisms  from $\Gamma_\eps(\theta^{t_0}\omega)$ to $\Lambda_\eps(\theta^{t_0}\omega)$.

Let us further assume further that $\Gamma_\eps(\theta^{t_0}\omega)$ is a \emph{homoclinic channel}, that is,
\[\Omega^\pm_\eps (\cdot,\theta^{t_0}\omega):\Gamma_\eps(\theta^{t_0}\omega)
\to U_{\eps}^\pm (\theta^{t_0}\omega):= \Omega_\eps ^{\pm} (\Gamma_\eps(\theta^{t_0}\omega),\theta^{t_0}\omega)
\] is
a  $C^{\ell-1}$-diffeomorphism.

\begin{defn}\label{def:scattering}
The \emph{random scattering map} associated to $\Gamma_\eps(\theta^{t_0}\omega)$ is defined as
\[
\sigma_\eps(\cdot,\theta^{t_0}\omega): U_{\eps}^-(\theta^{t_0}\omega) \to U^+_{\eps}(\theta^{t_0}\omega),\]
given by
\begin{equation} \label{eq:scattering}
\sigma_\eps(\cdot,\theta^{t_0}\omega)=\Omega^+_\eps(\cdot,\theta^{t_0}\omega)  \circ \left( \Omega^-_\eps(\cdot,\theta^{t_0}\omega)\right )^{-1}.
\end{equation}
\end{defn}

For each fixed path $\omega$, the scattering map is $\C^{\ell-1}$, and it depends on $\omega$ in a measurable fashion.

When there is no dependence on a random variable, Definition \ref{def:scattering} translates into the definition of the scattering map in the deterministic case
\cite{DelshamsLS06c}.

\subsection{The scattering map for the unperturbed  pendulum-rotator system}
\label{sec:scattering_rot_pend}
For the unperturbed system the definition of the scattering map from above translates into the standard definition of the scattering map as  in \cite{DelshamsLS06c}.

Since we have $W^\st(\Lambda_0)=W^\un(\Lambda_0)$ and for each $z\in\Lambda_0$, $W^\st(z)=W^\un(z)$, the corresponding scattering map $\sigma_0$ is defined on the whole $\Lambda_0$ as the identity map. Thus, $\sigma_0(z^-)=z^+$ implies $z^-=z^+$.
Expressed in terms of the action-angle coordinates $(I,\phi)$ of the rotator, we have  \begin{equation}\label{eqn:sigma0i}\sigma_0(I,\phi)=(I,\phi).\end{equation}

In the next section, we provide a formula to estimate the effect of the scattering map on the action of the rotator.

\subsection{Change in action by the scattering map}
\label{sec:change_in_action}
%

Denote
\begin{equation}\begin{split}
\mathscr{I}(I,\phi,\tau, s)= &\{I,H_1\}\left(I,\phi+\nu(I) s,p_0(\tau+ s ),q_0(\tau+ s)\right)
\\
  & -\{I,H_1\}\left(I,\phi+\nu(I) s,0,0\right)
\end{split}\end{equation}
and, for $(I,\phi,\tau)$  fixed, let
\begin{equation}\label{eqn:tau_I_integral}
 \t \mapsto M^I(\t)= \int_{-\infty}^{\infty} \mathscr{I}(I,\phi,\tau,s) \theta^{\t}\omega(s)  \, d s .
\end{equation}

\begin{prop}
The stochastic process \begin{equation}\begin{split}
\t\mapsto M^I(\t)\end{split}\end{equation}
is a stationary Gaussian process with mean
\[E[M^I(\t)]=0,\]
and autocorrelation \[\rho^I(\sigma):=E\left[M^I(\tau+\sigma)M^I(\tau)\right] \] given by
\[\rho^I(h)=\int_{-\infty}^{\infty}\int_{-\infty}^{\infty} \mathscr{I}(s_1)\mathscr{I}(s_2)r(s_2-s_1+h) ds_1ds_2.\]

Hence $M^I(\t)$ is ergodic.
\end{prop}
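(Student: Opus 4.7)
The plan is to reduce this proposition to the general Lemma~\ref{lem:process}, exactly as Proposition~\ref{prop:Mprocess} was reduced for $M^P$. The only substantive work is checking the hypotheses on $\mathscr{I}$, after which the formulas for the mean and autocorrelation are immediate from the lemma, and ergodicity will follow from Maruyama's theorem.

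First, I would fix $(I,\phi,\tau)$ and verify that $\mathscr{F}(s):=\mathscr{I}(I,\phi,\tau,s)$ is $C^1$ with $\mathscr{F}(s)\to 0$ and $D\mathscr{F}(s)\to 0$ exponentially fast as $s\to\pm\infty$. Since $H_1$ is uniformly $C^2$, the function $(p,q)\mapsto \{I,H_1\}(I,\phi+\nu(I)s,p,q)$ is $C^1$ with a Lipschitz bound independent of $s$; hence
\[
|\mathscr{I}(I,\phi,\tau,s)|\le L\bigl(|p_0(\tau+s)|+|q_0(\tau+s)|\bigr),
\]
and $(p_0(\tau+s),q_0(\tau+s))\to(0,0)$ exponentially fast by the hyperbolicity of the fixed point $(0,0)$ of the pendulum (rate $\beta$, as recalled in Section~\ref{section:NHIM_ unperturbed rotator_pendulum}). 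Differentiating in $s$ produces a term proportional to $\nu(I)$ acting by $\partial_\phi$ on the same difference, which decays at the same exponential rate, plus terms proportional to $(\dot p_0(\tau+s),\dot q_0(\tau+s))$, which themselves decay exponentially because $(p_0,q_0)$ is a trajectory on the homoclinic orbit of a hyperbolic fixed point. The key point is the subtraction in the definition of $\mathscr{I}$: it removes the part of $\{I,H_1\}$ evaluated on $\Lambda_0$, which does not decay, leaving only the transverse contribution that does.

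Having checked the decay hypothesis, I would apply Lemma~\ref{lem:process} with $\mathscr{F}=\mathscr{I}$. This immediately gives that $M^I(\t)$ is a stationary Gaussian process with $E[M^I(\t)]=0$ and
\[
\rho^I(h)=\int_{-\infty}^{\infty}\!\int_{-\infty}^{\infty}\mathscr{I}(s_1)\mathscr{I}(s_2)\,r(s_2-s_1+h)\,ds_1\,ds_2,
\]
as claimed. Stationarity of $M^I$ as a function of $\t$ follows from the stationarity of $\eta$ under the shift $\theta^\t$, by the same change-of-variables argument used in the proof of Proposition~\ref{prop:time_invariance}.

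Finally, for ergodicity I would invoke Maruyama's theorem, noting that a centered stationary Gaussian process is ergodic as soon as its spectral measure has no atoms, equivalently, as soon as its autocorrelation function tends to $0$ in Ces\`aro mean at infinity. Under assumption (R2), $r$ is absolutely integrable on $\mathbb{R}$, so Fubini yields $\rho^I(h)\to 0$ as $|h|\to\infty$, using that $\mathscr{I}(s_1)\mathscr{I}(s_2)$ is integrable on $\mathbb{R}^2$ (by the exponential decay verified above) and that translates of an $L^1$ function tested against a fixed $L^1$ kernel produce a vanishing-at-infinity convolution. This gives Ces\`aro decay of $\rho^I$, hence ergodicity of $M^I(\t)$. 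The mildly delicate step is the exponential-decay verification for $D\mathscr{I}$, since one must be careful that the $\phi$-derivative, which acts on an argument that itself grows linearly in $s$ through $\nu(I)s$, does not destroy decay; but as noted above, the $\partial_\phi$ only hits the \emph{difference} in $(p,q)$, which is already exponentially small.
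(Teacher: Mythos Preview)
Your proposal is correct and takes essentially the same approach as the paper: reduce to Lemma~\ref{lem:process} with $\mathscr{F}=\mathscr{I}$. The paper's proof is in fact a single line (``It follows immediately by applying Lemma~\ref{lem:process} to $s\mapsto \mathscr{I}(I,\phi,\tau,s)$''), so your verification of the exponential-decay hypotheses on $\mathscr{I}$ and $D\mathscr{I}$, and your explicit Maruyama argument for ergodicity, are details the paper leaves to the reader rather than a different route.
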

\begin{proof}
It follows immediately by applying Lemma \ref{lem:process} to $s\mapsto \mathscr{I}(I,\phi,\tau,s)$.
\end{proof}

Define
\begin{equation}\label{}
\chi^I_0=\rho^I(0)\textrm{ and }\chi^I_2=-\frac{d^2}{dh^2}_{\mid h=0}\rho^I(h).
\end{equation}

We will make the following assumption on the spectral moments of  $M^I$:

\begin{equation}\label{eqn:spectral_moments_I}\tag{SMI}
\chi^I_0>0 \textrm{ and }\chi^I_2>0.
\end{equation}

\begin{prop}\label{prop:Rice2}
Assume the condition \eqref{eqn:spectral_moments_I}.  Let $v>0$.

Then the number of times $N_T$ the process $M^I(\t)$ crosses the value  $v$ from above as well as from below
on the interval $[0,T]$ has expectation \begin{equation}
E[N_T]=\frac{T}{\pi}\sqrt{\frac{\chi^I_2}{\chi^I_0}}\exp\left(-\frac{(v-E[M^I(\t)])^2}{2\chi^I_0}\right)=
\frac{T}{\pi}\sqrt{\frac{\chi^I_2}{\chi^I_0}}\exp\left(-\frac{ v ^2}{2\chi^I_0}\right).
\end{equation}
\end{prop}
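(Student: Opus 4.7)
The plan is to view this as a direct application of the classical Rice formula for a stationary, mean-zero Gaussian process at a fixed level, reducing everything to verifying the hypotheses and then shifting the level from $0$ to $v$. I would first invoke the preceding proposition, which already establishes that $M^I(\t)$ is a stationary Gaussian process with $E[M^I(\t)] = 0$, variance (zeroth spectral moment) $\chi^I_0 = \rho^I(0)$, and second spectral moment $\chi^I_2 = -\rho^{I\prime\prime}(0)$. The hypothesis \eqref{eqn:spectral_moments_I} ensures both are strictly positive and, since $\mathscr{I}(s)$ and its derivative decay exponentially and $r$ is bounded, both are finite. These are precisely the non-degeneracy and finiteness conditions needed to apply Rice's formula.

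Next I would verify the regularity required by Rice's formula, namely that $M^I(\t)$ has almost surely continuously differentiable sample paths. This follows from the finiteness of $\chi^I_2$ together with smoothness of the covariance $\rho^I$ at the origin: differentiation under the integral in \eqref{eqn:tau_I_integral} is justified by the exponential decay of $\mathscr{I}$ and the sub-linear growth of $\omega$ given by Lemma \ref{lem:eta}, so $\dot M^I(\t)$ exists as a stationary Gaussian process with variance $\chi^I_2$. In particular, $M^I(\t)$ satisfies the mild regularity conditions (e.g.\ Bulinskaya's condition) that exclude tangential crossings almost surely.

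Having set up the framework, I would apply Rice's formula in its standard form for a stationary Gaussian process with mean $\mu$, variance $\lambda_0$, and second spectral moment $\lambda_2$: the expected number of up-crossings of level $v$ on $[0,T]$ is
\[
\frac{T}{2\pi}\sqrt{\frac{\lambda_2}{\lambda_0}}\exp\!\left(-\frac{(v-\mu)^2}{2\lambda_0}\right),
\]
and by symmetry the expected number of down-crossings equals the expected number of up-crossings. Specializing to $\mu = 0$, $\lambda_0 = \chi^I_0$, $\lambda_2 = \chi^I_2$, and summing up-crossings and down-crossings, yields the stated expression for the total number of crossings $N_T$ of the level $v$.

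I do not expect any serious obstacle: this is exactly the setting of the earlier Proposition \ref{prop:nondegenerate} (applied there to level $0$ and zero-count). The only points requiring care are (a) confirming that differentiability of sample paths transfers from $\eta$ to the integrated process $M^I$, which uses the exponential decay of $\mathscr{I}$ and $D\mathscr{I}$ to allow differentiation under the integral sign together with Lemma \ref{lem:eta}; and (b) observing that up-crossings and down-crossings are symmetric for a mean-zero Gaussian process, so one factor of $2$ accounts for the total. Everything else is bookkeeping in Rice's formula.
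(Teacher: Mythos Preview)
Your proposal is correct and follows exactly the paper's approach: the paper's proof is simply ``Apply Rice's formula (see \cite{rice1939distribution,lindgren2012stationary}),'' and you have filled in the hypothesis-checking that justifies this application. One minor wording point: sample paths of $\eta$ are only H\"older, not differentiable, so the differentiability of $M^I$ comes not from transferring regularity of $\eta$ but from the change of variable that puts the $\t$-derivative onto the smooth kernel $\mathscr{I}$ rather than onto $\omega$---which is in fact what your differentiation-under-the-integral argument does.
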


\begin{proof}
Apply Rice's formula (see \cite{rice1939distribution,lindgren2012stationary}).

\end{proof}

\begin{proof}[\bf Proof of Theorem \ref{prop:change_in_I}]

Suppose $t_0\in Q_{A_\delta,T_\delta}$ and $\tz_\eps\in W^\un(\Lambda_\eps(\theta^{t_0}\omega))\cap W^\st(\Lambda_\eps(\theta^{t_0}\omega))$ is a transverse homoclinic point for $0<\eps<\eps_1$, as given by Theorem \ref{prop:transverse}.
Then there exists a homoclinic channel $\Gamma_\eps (\theta^{t_0}\omega)\subset W^\st(\Lambda_\eps(\theta^{t_0}\omega)$ containing $\tz_\eps$.

We recall that for Theorem \ref{prop:change_in_I} we assume \eqref{eqn:H_1}, which means that $\Lambda_\eps(\theta^{t_0}\omega)=\Lambda_0$ for $t_0\in Q_{A_\delta,T_\delta}$. This implies that the perturbed inner dynamics restricted to the RNHIM coincides the inner dynamics in the unperturbed case, and, in particular that it preserved  the action coordinate $I$ along orbits.

A computation similar to that in the proof of Proposition \ref{thm:transverse_homoclinic} yields
\begin{equation}\label{eqn:change_in_I_non_ham}\begin{split}
  I\left(\tz^{+}_{\eps}\right)-I\left(\tz^-_{\eps}\right)
= &\eps\int_{-\infty}^{+\infty} \big[\{I,H_1\}\left(I,\phi+\nu(I) s,p_0(\tau^*+ s ),q_0(\tau^*+ s)\right)
\\
  &\qquad\qquad -\{I,H_1\}\left(I,\phi+\nu(I) s,0,0\right)\big]\omega(t_0+s) \,  d s
  \\
  &+O(\eps^{1+\rho})\\
  = &\eps M^I(I,\phi,\tau,\t)+O(\eps^{1+\rho}),
\end{split}\end{equation}
for $0<\varrho<1$; for details, see, e.g., \cite{gidea2021global}.

Following the same computation as in the proof of Proposition \ref{prop:time_invariance}
we obtain the following time invariance relation
\begin{equation}\label{eqn:M_invariance}
   M^I(I,\phi,\tau,  \t )=M^I(I,\phi+\nu(I)\varsigma,\tau+\varsigma, \t+\varsigma ).
\end{equation}
for all $\varsigma\in\mathbb{R}$.

If we apply the flow $\tPhi^\varsigma_\eps$ to the point $\tz_\eps$, while $\tPhi^\varsigma_\eps(\tz_\eps)$ may no longer stay in $W^\un(\Lambda_\eps(\theta^{t_0}\omega)\cap W^\st(\Lambda_\eps(\theta^{t_0}\omega)$, it remains asymptotic in both forward and backward time to $\Lambda_0$.

More precisely, we have
\begin{equation}\label{eqn:biasimptotic}
\begin{split}
d(\tPhi^{t}_\eps(\tPhi^\varsigma_\eps(\tz_\eps),\tPhi^{t}_\eps(\tPhi^\varsigma_\eps(\tz^+_\eps))\to 0 \textrm { when } t\to+\infty,\\
d(\tPhi^{t}_\eps(\tPhi^\varsigma_\eps(\tz_\eps),\tPhi^{t}_\eps(\tPhi^\varsigma_\eps(\tz^-_\eps))\to 0 \textrm { when } t\to-\infty.
\end{split}
\end{equation}

We have
\begin{equation}\label{eqn:change_in_I_non_ham_2}\begin{split}
  I&\left(\tPhi^\varsigma_\eps(\tz^{+}_{\eps})\right)- I\left(\tPhi^\varsigma_\eps(\tz^-_{\eps})\right)\\
= &\eps\int_{-\infty}^{+\infty} \big[ \{I,H_1\}\left(I,\phi+\nu(I) s+\nu(I)\varsigma,p_0(\tau^*+ s+\varsigma ),q_0(\tau^*+ s+\varsigma)\right)
\\
  &\qquad\qquad -\{I,H_1\}\left(I,\phi+\nu(I) s+\nu(I)\varsigma,0,0\right)\big]\omega(t_0+s+\varsigma) \,  d s
  \\
  &+O(\eps^{1+\rho})\\
  = &\eps M^I(I,\phi+\varsigma,\tau+\varsigma,t_0+\varsigma)+O_{C^1}(\eps^{1+\rho}),
\end{split}\end{equation}

Due to \eqref{eqn:H_1}, $I\left(\tPhi^\varsigma_\eps(\tz^{+}_{\eps})\right)=I\left(\tz^{+}_{\eps}\right)$ and  $I\left(\tPhi^\varsigma_\eps(\tz^{-}_{\eps})\right)=I\left(\tz^{-}_{\eps}\right)$.

By Proposition \ref{prop:Rice2} there exists $\varsigma^*$ such that $t_0+\varsigma^*$ is a point where the process $M^I$ crosses the prescribed value $v$.

Then \eqref{eqn:change_in_I_non_ham_2} implies
\begin{equation}\label{eqn:change_in_I_non_ham_3}\begin{split}
  I\left(\tPhi^\varsigma_\eps(\tz^{+}_{\eps})\right)-I\left(\tPhi^\varsigma_\eps(\tz^-_{\eps})\right)
= &\eps v+O(\eps^{1+\rho}).
\end{split}\end{equation}

This concludes the proof.

\end{proof}

\section{Conclusions and future work}

To summarize, in this paper we considered a rotator-pendulum system with a random perturbation of special type,
and we proved the persistence of the NHIM and of the stable and unstable manifolds, the existence of transverse homoclinic orbits,
and the existence of orbits that exhibit micro-diffusion in the action.
The perturbation  is given by some Hamiltonian vector field that vanishes on the phase space of the rotator, multiplied by unbounded noise.
We work with path-wise  solutions under the assumption  that the sample paths are H\"older continuous.
The persistence of the NHIM is proved not for all times, but only for a distinguished set of times within some arbitrarily large time interval.
It seems possible that the RNHIMs and their stable and unstable manifolds exist for all times,  but may not be of uniform size, or of a  size necessary to guarantee the crossing of the stable and unstable manifolds. Further investigation is planned on this regard,  including also a generalization to less regular sample paths for the noise.

We also plan to show  the existence of Arnold diffusion (rather than micro-diffusion); a foreseeable way is by developing random versions of the lambda lemma and  shadowing lemma, and showing that we can find $O(1/\eps)$ pseudo-orbits of the scattering map that we can join together.

\appendix
\section{Gronwall's inequality}\label{sec:gronwall}

A general form of Gronwall’s lemma is stated in Lemma \ref{lem:gronwall} and it is used to derive the other
three inequalities  that are used in the main text.

\begin{lem}[Gronwall's Lemma]\label{lem:gronwall}
 Let $\alpha$, $\beta$ and $\phi$  be real-valued functions defined on $[t_0,+\infty)$. Assume that $\beta$ and $\phi$ are continuous and that the negative part of $\alpha$ is integrable on every closed and bounded subinterval of $[t_0,+\infty)$.
\begin{itemize}
\item[(i)] If $\beta$ is non-negative and if $\phi$ satisfies the integral inequality
\begin{equation}\label{eqn:ineq1}
  \phi(t)\leq \alpha(t)+\int_{t_0}^{t} \beta(s) \phi(s) ds \textrm { for } t\geq t_0,
\end{equation}
then
\begin{equation}\label{eqn:ineq2}
  \phi(t)\leq \alpha(t)+\int_{t_0}^{t} \alpha(s)\beta(s) \exp\left(\int_{s}^{t} \beta(r) dr \right) ds\textrm { for } t\geq t_0.
\end{equation}

\item[(ii)] If, in addition, the function $\alpha$ is non-decreasing, then
\begin{equation}\label{eqn:ineq3}
  \phi(t)\leq \alpha(t)\exp\left(\int_{t_0}^{t} \beta(s) ds \right) \textrm { for } t\geq t_0.
\end{equation}
\end{itemize}
\end{lem}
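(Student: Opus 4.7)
The plan for part (i) is the standard integrating-factor argument. First I would introduce the auxiliary function
\[ \psi(t) := \int_{t_0}^{t} \beta(s)\phi(s)\,ds, \]
which is well-defined and continuously differentiable by the continuity of $\beta$ and $\phi$, with $\psi(t_0)=0$ and $\psi'(t)=\beta(t)\phi(t)$. Substituting the hypothesis \eqref{eqn:ineq1} into this derivative yields the pointwise estimate $\psi'(t) - \beta(t)\psi(t) \leq \beta(t)\alpha(t)$. The goal then becomes to bound $\psi$ by a quantity that, when added to $\alpha(t)$, reproduces \eqref{eqn:ineq2}.

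Next, I would multiply both sides by the integrating factor $\mu(t):=\exp\!\bigl(-\int_{t_0}^{t}\beta(r)\,dr\bigr)$, which is positive since $\beta\geq 0$, and recognize the left-hand side as $\frac{d}{dt}\bigl(\mu(t)\psi(t)\bigr)$. Integrating from $t_0$ to $t$ (using $\psi(t_0)=0$) gives
\[ \mu(t)\psi(t) \leq \int_{t_0}^{t}\beta(s)\alpha(s)\mu(s)\,ds, \]
and then multiplying through by $1/\mu(t)$ produces
\[ \psi(t) \leq \int_{t_0}^{t}\alpha(s)\beta(s)\exp\!\Bigl(\int_{s}^{t}\beta(r)\,dr\Bigr)\,ds. \]
Combining this with $\phi(t)\leq \alpha(t)+\psi(t)$ yields \eqref{eqn:ineq2}. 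A small caveat to address is the integrability on the right: the hypothesis that the negative part of $\alpha$ is locally integrable, together with the continuity of $\beta$, makes the product $\alpha\beta$ locally integrable, so all integrals above are finite on $[t_0,t]$.

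For part (ii), I would bound $\alpha(s)\leq \alpha(t)$ inside the integral in \eqref{eqn:ineq2} using monotonicity, factor $\alpha(t)$ out, and then evaluate
\[ \int_{t_0}^{t}\beta(s)\exp\!\Bigl(\int_{s}^{t}\beta(r)\,dr\Bigr)\,ds \]
exactly by recognizing the integrand as $-\frac{d}{ds}\exp\!\bigl(\int_{s}^{t}\beta(r)\,dr\bigr)$. The fundamental theorem of calculus then gives the value $\exp\!\bigl(\int_{t_0}^{t}\beta(r)\,dr\bigr)-1$, and combining with the leading $\alpha(t)$ term collapses the bound to \eqref{eqn:ineq3}.

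The main obstacle is essentially bookkeeping rather than anything genuinely hard: one must justify differentiating $\psi$ almost everywhere and ensure the integrating-factor manipulation is valid under the stated (mild) regularity of $\alpha$. Since $\phi$ and $\beta$ are continuous and $\alpha$ need only have a locally integrable negative part, the inequality $\psi'(t)-\beta(t)\psi(t)\leq \beta(t)\alpha(t)$ holds at every $t$, and the subsequent integration is elementary; I would briefly note this to make clear that no stronger regularity of $\alpha$ is needed.
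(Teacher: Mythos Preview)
Your argument is the standard integrating-factor proof of Gronwall's inequality and is correct; the only point worth tightening is the remark on integrability, where you should note that since $\beta\ge 0$ and $\exp\bigl(\int_s^t\beta\bigr)$ is bounded on $[t_0,t]$, local integrability of the negative part of $\alpha$ guarantees the integral on the right of \eqref{eqn:ineq2} is well-defined in $(-\infty,+\infty]$, which is all that is needed.

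As for comparison with the paper: the paper does not actually prove Lemma~\ref{lem:gronwall}. It is stated as a known result with a reference (``For a reference, see e.g.\ \cite{Pachpatte98}'') and is then used as a black box to derive the three tailored Gronwall-type inequalities (Lemmas~\ref{lem:gronwall-I}--\ref{lem:gronwall-III}) needed in the main text. So your proposal goes further than the paper by supplying the classical argument; there is no alternative proof in the paper to compare against.
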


For a reference, see e.g. \cite{Pachpatte98}.

\begin{lem}[Gronwall's Inequality -- I]\label{lem:gronwall-I}
Assume that $\delta_0,\delta_1,\delta_2,\delta_3>0$, $t_0\geq 0$, and  $\phi$ is a continuous  function.

If
\[\phi(t)\leq\delta_0+\delta_1 t+\delta_2 t^2+\int_{t_0}^t \delta_3\phi(s) ds, \textrm { for } t\ge t_0,\]
then
\begin{equation}\label{eqn:gronwall-I}
\begin{split}\phi(t)\leq& \left(\delta_0+\delta_1 t+\delta_2 t^2\right) e^{\delta_3 (t-t_0)}\\ <&\left(\delta_0+\delta_1 t+\delta_2 t^2\right) e^{\delta_3 t},
\textrm { for } t\ge t_0.
\end{split}\end{equation}
\end{lem}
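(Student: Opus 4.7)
The plan is to deduce this inequality as a direct application of the general Gronwall's Lemma \ref{lem:gronwall}(ii) stated immediately before it. I would set $\alpha(t)=\delta_0+\delta_1 t+\delta_2 t^2$ and $\beta(t)\equiv \delta_3$, and verify the hypotheses needed to invoke part (ii) of the lemma: $\beta$ is constant and non-negative, and $\alpha$ is continuous. The negative part of $\alpha$ is trivially integrable (in fact $\alpha\ge 0$ for $t\ge t_0 \ge 0$ since all the $\delta_i$ are positive).

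Next, I would check that $\alpha$ is non-decreasing on $[t_0,+\infty)$. Since $\delta_1,\delta_2>0$ and $t_0\ge 0$, the derivative $\alpha'(t)=\delta_1+2\delta_2 t\ge \delta_1>0$ for all $t\ge t_0$, so $\alpha$ is strictly increasing. The hypothesis of Lemma \ref{lem:gronwall}(ii) is therefore satisfied, and applying it to the integral inequality given in the statement yields
\[
\phi(t)\le \alpha(t)\exp\!\left(\int_{t_0}^{t}\delta_3\,ds\right)=\bigl(\delta_0+\delta_1 t+\delta_2 t^2\bigr)\,e^{\delta_3(t-t_0)},
\]
which is exactly the first bound in \eqref{eqn:gronwall-I}.

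The second bound in \eqref{eqn:gronwall-I} follows by a monotonicity observation: since $t_0\ge 0$, we have $t-t_0\le t$, so $e^{\delta_3(t-t_0)}\le e^{\delta_3 t}$, and multiplying by the non-negative quantity $\delta_0+\delta_1 t+\delta_2 t^2$ preserves the inequality. Strict inequality holds whenever $t_0>0$; if $t_0=0$ the two bounds coincide, in which case the statement should be read with $\le$ rather than $<$ in the second inequality.

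There is essentially no main obstacle here: the result is a packaging of Lemma \ref{lem:gronwall} tailored to the quadratic-in-$t$ forcing term that appears repeatedly in the proof of Proposition \ref{prop:perturbed_flow}. The only step that could superficially look delicate is the monotonicity of $\alpha$, which is needed to pass from part (i) to the cleaner bound in part (ii); but the sign assumptions $\delta_1,\delta_2>0$ and $t_0\ge 0$ make this automatic. No additional estimates, approximations, or limiting arguments are required.
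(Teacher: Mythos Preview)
Your proposal is correct and matches the paper's proof essentially verbatim: set $\alpha(t)=\delta_0+\delta_1 t+\delta_2 t^2$, $\beta(t)=\delta_3$, check $\alpha'(t)=\delta_1+2\delta_2 t\ge 0$ for $t\ge 0$, and invoke Lemma~\ref{lem:gronwall}(ii). Your additional remark that the strict inequality in the second line degenerates to equality when $t_0=0$ is a fair observation the paper does not make explicit.
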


\begin{proof}
Let $\alpha(t)=\delta_0+\delta_1 t+\delta_2 t^2$ and $\beta(t)=\delta_3$, where $\delta_0,\delta_1,\delta_2,\delta_3>0$.
Then $\alpha'(t)=\delta_1+2\delta_2 t\geq 0$ for $t\geq 0$, so $\alpha$ is non-decreasing.
Lemma \ref{lem:gronwall}-(ii) implies that
\begin{equation}\label{eqn:ineq4}
  \phi(t)\leq (\delta_0+\delta_1 t+\delta_2 t^2)e^{\delta_3(t-t_0)} \textrm { for } t\geq t_0.
\end{equation}
\end{proof}

\begin{lem}[Gronwall Inequality -- II]\label{lem:gronwall-II}

Assume that  $\delta_0,\delta_1,\delta_2,\delta_3,\delta_4>0$, $t_0\geq 0$, and   $\phi$ is a continuous  function.

If
\[\phi(t)\leq\delta_0+\delta_1 t+\delta_2 t^2+\int_{t_0}^t (\delta_3+\delta_4 s)\phi(s) ds, \textrm { for } t\ge t_0,\]
then
\begin{equation}\label{eqn:gronwall-II}
\begin{split}\phi(t)\leq& \left(\delta_0+\delta_1 t+\delta_2 t^2\right) e^{\left[(\delta_3t+\delta_4\frac{t^2}{2})-(\delta_3t_0+\delta_4\frac{t_0^2}{2})\right]}
\\ <&\left(\delta_0+\delta_1 t+\delta_2 t^2\right) e^{(\delta_3t+\delta_4\frac{t^2}{2})}, \textrm { for } t\ge t_0.
\end{split}\end{equation}
\end{lem}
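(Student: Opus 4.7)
The plan is to mimic the proof of Gronwall's Inequality -- I (Lemma \ref{lem:gronwall-I}) essentially verbatim, but with a linear (rather than constant) coefficient function $\beta$. Specifically, I would set $\alpha(t)=\delta_0+\delta_1 t+\delta_2 t^2$ and $\beta(t)=\delta_3+\delta_4 t$, and verify the hypotheses of the general Gronwall Lemma \ref{lem:gronwall}(ii). Since $\delta_1,\delta_2\ge 0$ we have $\alpha'(t)=\delta_1+2\delta_2 t\ge 0$ for $t\ge 0$, so $\alpha$ is non-decreasing on $[t_0,+\infty)$; and since $\delta_3,\delta_4\ge 0$ and $t\ge t_0\ge 0$, we have $\beta(t)\ge 0$. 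Therefore Lemma \ref{lem:gronwall}(ii) is applicable to the hypothesized integral inequality.

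Invoking Lemma \ref{lem:gronwall}(ii) directly gives
\begin{equation*}
\phi(t)\le \alpha(t)\exp\!\left(\int_{t_0}^{t}\beta(s)\,ds\right)
=\left(\delta_0+\delta_1 t+\delta_2 t^2\right)\exp\!\left(\int_{t_0}^{t}(\delta_3+\delta_4 s)\,ds\right).
\end{equation*}
The next step is a one-line computation of the integral in the exponent:
\begin{equation*}
\int_{t_0}^{t}(\delta_3+\delta_4 s)\,ds=\delta_3(t-t_0)+\frac{\delta_4}{2}(t^2-t_0^2)=\left(\delta_3 t+\frac{\delta_4}{2}t^2\right)-\left(\delta_3 t_0+\frac{\delta_4}{2}t_0^2\right),
\end{equation*}
which yields the first inequality in \eqref{eqn:gronwall-II}. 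The second (cruder) inequality in \eqref{eqn:gronwall-II} follows immediately from the fact that $\delta_3 t_0+\frac{\delta_4}{2}t_0^2\ge 0$ for $t_0\ge 0$, so the exponential factor is bounded above when we drop the negative term in the exponent.

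There is no real obstacle here: the lemma is a direct specialization of the general Gronwall inequality, and the only small item to check is the monotonicity of $\alpha$ (used to pass from part (i) to part (ii) of Lemma \ref{lem:gronwall}), which requires $t_0\ge 0$ together with positivity of the $\delta_i$, both of which are assumed. The proof should therefore be only a few lines long, structurally identical to the proof of Lemma \ref{lem:gronwall-I}, with the constant $\delta_3$ there replaced by the affine function $\delta_3+\delta_4 s$ here.
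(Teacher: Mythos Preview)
Your proposal is correct and follows exactly the paper's approach: set $\alpha(t)=\delta_0+\delta_1 t+\delta_2 t^2$, $\beta(t)=\delta_3+\delta_4 t$, and apply Lemma~\ref{lem:gronwall}(ii). The paper's proof is even terser than yours, omitting the explicit verification of monotonicity and the evaluation of the integral in the exponent.
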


\begin{proof}
Let $\alpha(t)=\delta_0+\delta_1 t+\delta_2 t^2$ and $\beta(t)=\delta_3+\delta_4 t$, where $\delta_0,\delta_1,\delta_2,\delta_3,\delta_4>0$.
Then
Lemma \ref{lem:gronwall}-(ii) implies that
\begin{equation}\label{eqn:ineq5}
  \phi(t)\leq (\delta_0+\delta_1 t+\delta_2 t^2)e^{\left[(\delta_3 t+\frac{\delta_4}{2}t^2)-(\delta_3 t_0+\frac{\delta_4}{2}t_0^2)\right]} \textrm { for } t\geq t_0.
\end{equation}
\end{proof}


\begin{lem}[Gronwall's Inequality -- III]\label{lem:gronwall-III}
Let $M$ be an $n$-dimensional manifold, $\X^0:M\to TM$ be vector field on $M$ that is Lipschitz in $z\in M$, and $\X^1:M\times\mathbb{R}\times \Omega\to TM$ a time-dependent vector field on $M$ that is Lipschitz in $z\in M$, continuous in $t\in\mathbb{R}$, and measurable in $\omega\in\Omega$.

Consider the following differential equations:
\begin{eqnarray}
\label{eqn:eq_0}\dot{z}(t)&=&\X^0(z),\\
\label{eqn:eq_1}\dot{z}(t)&=&\X^0(z)+\eps \X^1(z, \omega(t_0+t)).
\end{eqnarray}

Assume:
\begin{itemize}
\item $\X^0$  has Lipschitz constant $C_3>0$;
\item For a fixed continuous path $\omega\in\Omega$, $\X^1$ satisfies
\begin{equation}\label{eqn:eq_X_1_omega}\|\X^1(z, \omega(t_0+t))\|\leq C_1t+C_2,
 \end{equation}
  for some $C_1,C_2>0$ depending on $\omega$ and $t_0$ and all $t\geq 0$.
\end{itemize}

Let $z_0$ be a solution of the equation \eqref{eqn:eq_0} and $z_\eps$ be a solution of the equation \eqref{eqn:eq_1} such that
\begin{equation}\label{eqn:3}
\|z_0(0)-z_\eps(0)\|<C_0\eps, \textrm{ for some } C_0>0 \textrm{ depending on $\omega$}.
\end{equation}

Then, for  $0<\varrho_1<1$, $k\leq \frac{1-\rho_1}{C_3}$, there exist  $\eps_0>0$ and  $K$, such that for $0\leq \eps<\eps_0$ we have
\begin{equation}\label{eqn:gronwall-III}
\|z_0(t)-z_\eps(t)\|< K\eps^{\varrho_1}, \textrm{ for } 0\leq t \leq k\ln(1/\eps).
\end{equation}
\end{lem}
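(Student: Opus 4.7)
The plan is to write both solutions in Carath\'eodory integral form, subtract, and estimate the difference using the Lipschitz bound on $X^0$ and the linear growth bound on $X^1$ from \eqref{eqn:eq_X_1_omega}, reducing the problem to an application of Lemma \ref{lem:gronwall-I}. Specifically, from the hypotheses we have
\begin{equation*}
z_0(t) - z_\eps(t) = \bigl(z_0(0) - z_\eps(0)\bigr) + \int_0^t \bigl[X^0(z_0(s)) - X^0(z_\eps(s))\bigr]\,ds - \eps \int_0^t X^1(z_\eps(s),\omega(t_0+s))\,ds.
\end{equation*}
Taking norms, invoking the Lipschitz constant $C_3$ of $X^0$, using \eqref{eqn:eq_X_1_omega}, and using \eqref{eqn:3} yields
\begin{equation*}
\|z_0(t)-z_\eps(t)\| \;\le\; C_0\eps + \eps C_2 t + \tfrac{\eps C_1}{2}t^2 + C_3\int_0^t \|z_0(s)-z_\eps(s)\|\,ds.
\end{equation*}

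Next I would apply Lemma \ref{lem:gronwall-I} with $\delta_0 = C_0\eps$, $\delta_1 = C_2\eps$, $\delta_2 = \tfrac{C_1}{2}\eps$, and $\delta_3 = C_3$, obtaining
\begin{equation*}
\|z_0(t)-z_\eps(t)\| \;\le\; \eps\bigl(C_0 + C_2 t + \tfrac{C_1}{2}t^2\bigr)\,e^{C_3 t}.
\end{equation*}
Then I would specialize to the time range $0 \le t \le k\ln(1/\eps)$. On this range $e^{C_3 t} \le \eps^{-C_3 k}$, so
\begin{equation*}
\|z_0(t)-z_\eps(t)\| \;\le\; \eps^{1-C_3 k}\Bigl(C_0 + C_2 k\ln(1/\eps) + \tfrac{C_1}{2} k^2 \ln^2(1/\eps)\Bigr).
\end{equation*}

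Finally, I would exploit the hypothesis $k \le (1-\varrho_1)/C_3$ to conclude. Pick any $\varrho_1'$ with $\varrho_1 < \varrho_1' < 1$ and choose $k' \le (1-\varrho_1')/C_3$ (possibly smaller than the prescribed $k$, which only tightens the estimate). Then $1 - C_3 k' \ge \varrho_1'$, so the right-hand side above is bounded by $\eps^{\varrho_1'} P(\ln(1/\eps))$ where $P$ is a fixed quadratic. Since $\eps^{\varrho_1'-\varrho_1} P(\ln(1/\eps)) \to 0$ as $\eps \to 0$, there exists $\eps_0 > 0$ such that for all $0 \le \eps < \eps_0$ the entire expression is bounded by $K\eps^{\varrho_1}$ for an appropriate constant $K$ (absorbing $C_0,C_1,C_2$ and depending on $\omega$ through $C_0,C_1,C_2$). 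This gives \eqref{eqn:gronwall-III} on the claimed time interval.

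The mildly delicate step is the last one: handling the logarithmic factors generated by the exponential time window $t \le k\ln(1/\eps)$. The safest route is to introduce the tiny slack $\varrho_1' > \varrho_1$ above rather than attempt the boundary case $k = (1-\varrho_1)/C_3$ directly, since at the boundary the factor $\ln^2(1/\eps)$ would prevent a uniform $K$. Everything else is routine and follows verbatim from the general Gronwall lemma already established.
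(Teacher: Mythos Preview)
Your approach is essentially identical to the paper's: write both solutions in integral form, subtract, apply Gronwall's Inequality~I, then absorb the logarithmic factors coming from the time window $t\le k\ln(1/\eps)$ by introducing a small slack in the exponent. The paper does exactly this, introducing an intermediate exponent $\varrho\in(\varrho_1,1)$ and writing $\eps^{\varrho}=\eps^{\varrho_1}\cdot\eps^{\varrho-\varrho_1}$ so that $\eps^{\varrho-\varrho_1}\ln^2(1/\eps)$ is bounded for $\eps$ small.

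There is, however, a slip in your final paragraph. When you ``choose $k'\le(1-\varrho_1')/C_3$ (possibly smaller than the prescribed $k$, which only tightens the estimate)'', this does \emph{not} tighten the estimate: shrinking $k$ to $k'$ proves the bound only on the shorter interval $[0,k'\ln(1/\eps)]$, which is a weaker conclusion than the one required on $[0,k\ln(1/\eps)]$. The correct move (and what the paper does) is to keep the given $k$ and exploit the slack in the hypothesis: if $k<(1-\varrho_1)/C_3$ strictly, pick $\varrho_1'$ with $\varrho_1<\varrho_1'<1-C_3k$, so that $1-C_3k\ge\varrho_1'$ and hence $\eps^{1-C_3k}\le\eps^{\varrho_1'}$; then $\eps^{\varrho_1'-\varrho_1}P(\ln(1/\eps))\to 0$ gives the uniform constant $K$. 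You are right that the boundary case $k=(1-\varrho_1)/C_3$ cannot be handled this way; the paper's proof likewise only establishes the strict-inequality case, so the $\le$ in the statement is a minor imprecision.
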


\begin{proof}
We have
\begin{equation}\label{eqn:g3-1}\begin{split}
z_0(t)=& z_0(0)+\int_0^t X_0(z_0(s))ds\\
z_\eps(t)=& z_\eps(0)+\int_0^t [ X_0(z_0(s))+\eps X_1(z_\eps(s),\omega (t_0+s))]ds
\end{split}
\end{equation}

\begin{equation}\label{eqn:g3-2}\begin{split}
\|z_\eps(t)-z_0(t)\|\leq &\| z_\eps(0) -z_0(0)\|+\int_0^t \|X_0(z_\eps(s))-X_0(z_0(s))\|ds\\
&+ \eps \int_0^t \|X_1(z_\eps(s),\omega (t_0+s))\| ds\\
\leq& \eps C_0+C_3\int_0^t \| z_\eps(s) - z_0(s)\|ds\\
&+\eps \int_0^t ( C_1s+C_2 ) ds\\
\leq& \eps C_0+\eps C_1t+\eps \frac{C_2}{2}t^2 + C_3\int_0^t \| z_\eps(s) - z_0(s)\|ds\\
\end{split}
\end{equation}

Applying Gronwall Lemma \ref{lem:gronwall-I} for $\delta_0=\eps C_0$, $\delta_1=\eps C_1 $, $\delta_2=\eps \frac{C_2}{2}$ and $\delta_3=C_3$,
we obtain
\begin{equation}\label{eqn:g3-3}\begin{split}
\|z_\eps(t)-z_0(t)\|\leq & \eps \left [ C_0+ C_1t+ \frac{C_2}{2}t^2 \right]e^{C_3 t}
\end{split}
\end{equation}
For $0\leq t\leq k\ln (\frac{1}{\eps})$ we have
\begin{equation}
\label{eqn:g3-4}\begin{split}
\|z_\eps(t)-z_0(t)\|\leq & \eps \left [ C_0+ C_1k\ln (\frac{1}{\eps}) + \frac{C_2}{2}k^2\left(\ln (\frac{1}{\eps})\right)^2 \right]
e^{C_3 k\ln (\frac{1}{\eps})}
\end{split}
\end{equation}

Let $k<\frac{1-\varrho}{C_3}$, where $\varrho\in(0,1)$. Then
\[ e^{C_3 k\ln (\frac{1}{\eps})}\leq \eps ^{-1+\varrho}.\]
From \eqref{eqn:g3-4} we obtain
\begin{equation}
\label{eqn:g3-5}\begin{split}
\|z_\eps(t)-z_0(t)\|\leq & \eps^\varrho \left [ C_0+ C_1k\ln (\frac{1}{\eps}) + \frac{C_2}{2}k^2\left(\ln (\frac{1}{\eps})\right)^2 \right]\\
=&\eps^{\varrho_1} \eps^{\varrho-\varrho_1}\left [ C_0+ C_1k\ln (\frac{1}{\eps}) + \frac{C_2}{2}k^2\left(\ln (\frac{1}{\eps})\right)^2 \right]
\end{split}
\end{equation}
for $0<\varrho_1<\varrho$. Note that $\varrho_1$ can be chosen arbitrary.

There exists $\eps_0$ and constants $A,B>0$ such that, for $0<\eps<\eps_0$, we have
\[\eps^{\varrho-\varrho_1}\ln (\frac{1}{\eps})<A \textrm { and } \eps^{\varrho-\varrho_1}\left (\ln (\frac{1}{\eps})\right)^2<B.\]

Therefore from \eqref{eqn:g3-5} we obtain that, for the  constant \[K=[ C_0+ C_1k A + \frac{C_2}{2}k^2 B ] \] we have
\[ \|z_\eps(t)-z_0(t)\|\leq  K\eps^{\varrho_1}.\]
\end{proof}

\bibliographystyle{alpha}
\bibliography{random}
\end{document}